\documentclass[a4paper]{amsart}

\usepackage[utf8]{inputenc}
\usepackage{mathtools}
\usepackage{amssymb,amsfonts,amsmath}
\usepackage{enumerate}
\usepackage{color}
\usepackage{mathrsfs}
\usepackage{graphicx}
\usepackage{verbatim}
\usepackage{tikz-cd}


\newtheorem{theorem}{Theorem}[section]
\newtheorem{corollary}[theorem]{Corollary}
\newtheorem{lemma}[theorem]{Lemma}
\newtheorem{proposition}[theorem]{Proposition}
\newtheorem{question}[theorem]{Question}

\theoremstyle{definition}
\newtheorem{definition}[theorem]{Definition}
\newtheorem{notation}[theorem]{Notation}

\theoremstyle{remark}
\newtheorem{remark}[theorem]{Remark}

\newtheorem{example}[theorem]{Example}


\newcommand{\F}{\mathbb{F}}
\newcommand{\N}{\mathbb{N}}

\newcommand{\R}{\mathbb{R}}
\newcommand{\Z}{\mathbb{Z}}

\DeclareMathOperator{\Aut}{Aut}
\DeclareMathOperator{\CSP}{CSP}

\DeclareMathOperator{\Fix}{Fix}

\DeclareMathOperator{\id}{id}

\DeclareMathOperator{\lv}{lv}

\DeclareMathOperator{\res}{res}

\DeclareMathOperator{\RiSt}{RiSt}

\DeclareMathOperator{\SL}{SL}
\DeclareMathOperator{\SAff}{SAff}

\DeclareMathOperator{\St}{St}
\DeclareMathOperator{\supp}{supp}
\DeclareMathOperator{\Sym}{Sym}
\DeclareMathOperator{\vol}{vol}
\newcommand{\abs}[1]{\vert #1 \vert}
\newcommand\Set[2]{\{\,#1\mid#2\,\}}

\newcommand{\defeq}{\mathrel{\mathop{:}}=}

\renewcommand{\epsilon}{\varepsilon}

\title[Amenability and profinite completions]{Amenability and profinite completions of finitely generated groups}
\author[S. Kionke]{Steffen Kionke}
\author[E. Schesler]{Eduard Schesler}
\address{FernUniversit\"at in Hagen \\ Fakult\"at f\"ur Mathematik und Informatik \\
58084 Hagen}
\email{steffen.kionke@fernuni-hagen.de}
\email{eduard.schesler@fernuni-hagen.de}
\thanks{Funded by the Deutsche Forschungsgemeinschaft (DFG, German Research Foundation) - 441848266}
\subjclass[2010]{Primary 20E18; Secondary 20E08, 20E26, 43A07}
\keywords{amenable, profinite completion, branch group}

\begin{document}
\begin{abstract}
This article explores the interplay between the finite quotients of finitely generated residually finite groups and the concept of amenability.
We construct a finitely generated, residually finite, amenable group $A$ and an uncountable family of finitely generated, residually finite non-amenable groups all of which are profinitely isomorphic to $A$. All of these groups are branch groups. 
Moreover, picking up  Grothendieck's problem, the group $A$ embeds in these groups such that the inclusion induces an isomorphism of profinite completions.  

In addition, we review the concept of uniform amenability, a strengthening of amenability introduced in the 70's, and we prove that uniform amenability indeed is detectable from the profinite completion.
\end{abstract}
\maketitle

\section{Introduction}
The profinite completion $\widehat{G}$ of a group $G$ is the inverse limit of its finite quotients. If $G$ is residually finite, then $G$ embeds into $\widehat{G}$ and it is natural to wonder what properties of $G$ can be detected from $\widehat{G}$. Especially a question of Grothendieck, posed in 1970 \cite{Grothendieck70}, sparked interest in this direction:  Is an embedding $\iota \colon H \to G$ of finitely presented, residually finite groups an isomorphism, if it induces an isomorphism $\widehat{\iota}\colon \widehat{H} \to \widehat{G}$ of profinite completions?
The answer is negative. Finitely generated counterexamples were constructed already in 1986 by Platonov and Tavgen \cite{PlatonovTavgen}. The finitely presented case was settled almost 20 years later by Bridson and Grunewald~\cite{BridsonGrunewald}.

In this article we explore the interplay between amenability and the profinite completion of finitely generated groups. Our interest was prompted by the following variation of Grothendieck's problem:

\smallskip

$(*)$ \textit{Given two finitely generated residually finite groups $A$, $G$, where $A$ is amenable. Suppose $\iota\colon A \to G$ induces an isomorphism $\widehat{\iota}\colon \widehat{A} \to \widehat{G}$. Is $G$ amenable?}

\smallskip

The answer is negative and we obtain the following result (a consequence of Theorem \ref{thm:main-theorem-precise} below). 
\begin{theorem}\label{thm:main-theorem}
There is an uncountable family of pairwise non-isomorphic, residually finite $18$-generator groups $(G_j)_{j\in J}$ and a residually finite $6$-generator group $A$ with embeddings $\iota_j \colon A \to G_j$ such that the following properties
 hold:
\begin{enumerate}
\item $\widehat{\iota_j} \colon \widehat{A} \rightarrow \widehat{G_j}$ is an isomorphism,
\item $A$ is amenable,
\item each $G_j$ contains a non-abelian free subgroup.
\end{enumerate}
In particular, amenability is not a profinite invariant of finitely generated residually finite groups.
\end{theorem}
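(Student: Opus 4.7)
The plan is to work within the class of branch groups acting on a spherically homogeneous rooted tree $T$, exploiting the fact that branch groups are automatically residually finite and that, for suitable examples, every finite quotient factors through the action on a finite level of $T$. The theorem is then obtained by constructing one base group $A$ together with an uncountable family $(G_j)_{j \in J}$ of overgroups that coincide with $A$ at every finite level of $T$ but differ in how they act on the "invisible" part of the tree (below every level).

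First I would choose $A$ to be a known finitely generated amenable branch group, generated by $6$ elements, acting faithfully on $T$, satisfying some form of the congruence subgroup property that identifies $\widehat{A}$ with the closure of $A$ in $\Aut(T)$. Candidates are built from Grigorchuk--Gupta--Sidki type constructions, for which amenability is established in the literature. The key structural feature to record is that the rigid level stabilizers $\RiSt_A(n)$ form a descending filtration whose intersection is trivial at the level of finite quotients, so elements that push deep into this filtration become invisible profinitely.

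Next, for each $j$ in an uncountable parameter set $J$, I would enlarge $A$ by adjoining a finite collection of $12$ new generators $x^{(j)}_1,\dots,x^{(j)}_{12}$ that generate a non-abelian free subgroup, but whose projections to every finite level of $T$ are trivial. Concretely, one embeds a free group into a direct product of rigid stabilizers at successive levels, using the branching structure of $T$ to distribute free generators so that all level projections cancel. The parameter $j$ encodes the levels and locations at which each free generator is "planted," and distinct $j$ should give rise to non-isomorphic groups; the standard way to ensure this is to arrange that an isomorphism invariant (such as a growth function, a spectrum of stabilizer structures, or an orbit-counting invariant) depends genuinely on $j$, in the style of Grigorchuk's uncountable family. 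Each $G_j$ is then a branch group acting on the same tree, residually finite, generated by $6+12=18$ elements, and contains a non-abelian free subgroup.

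Finally, to see that $\widehat{\iota_j}\colon \widehat{A}\to \widehat{G_j}$ is an isomorphism, I would argue that the congruence subgroup property transfers from $A$ to $G_j$: every finite-index subgroup of $G_j$ contains some level stabilizer, so finite quotients of $G_j$ factor through $G_j \to G_j/\St_{G_j}(n)$, and by construction the extra generators act trivially on each finite level, so this quotient coincides with $A/\St_A(n)$. Injectivity of $\widehat{\iota_j}$ follows from residual finiteness together with the fact that the embedding is compatible with level stabilizers. The main obstacle is the simultaneous achievement of three competing requirements on the new generators: (i) they must be trivial in every finite quotient (so they disappear profinitely), (ii) they must genuinely freely generate a free group in $G_j$ (so $G_j$ is non-amenable), and (iii) the construction must admit uncountably many inequivalent choices producing pairwise non-isomorphic $G_j$. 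Reconciling (i) and (ii) is delicate — it requires a version of the branch structure in which deep rigid stabilizers contain non-abelian free subgroups while mapping trivially to every finite level quotient — and verifying that the augmented groups still satisfy the congruence subgroup property needed for (i) is where I expect the technical heart of the argument to lie.
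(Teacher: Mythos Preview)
Your plan has a genuine gap at its core. Condition~(i) --- that the twelve new generators be ``trivial in every finite quotient'' of $G_j$ --- is incompatible with residual finiteness together with condition~(ii). If $G_j$ is residually finite and $x_i^{(j)}$ lies in every finite-index normal subgroup, then $x_i^{(j)} = 1$; so the elements you adjoin cannot simultaneously vanish profinitely and generate a non-abelian free group. Equivalently, if $G_j \leq \Aut(T)$ and an element acts trivially on every finite level, it is the identity automorphism of $T$. The phrase ``distribute free generators so that all level projections cancel'' therefore describes something that cannot exist.

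The mechanism that actually produces $\widehat{\iota_j}$ an isomorphism is different: one does not ask the extra generators to vanish in finite quotients, but rather arranges that $A$ and $G_j$ have \emph{the same closure} in $\Aut(T)$, and that both satisfy the congruence subgroup property so that this common closure is the profinite completion of each. In the paper this is achieved by taking $A$ to be an $\Omega$-construction over the finite group $\SAff_n(\F_p)$ (acting by rooted automorphisms, hence amenable by the bounded-automata theorem of Bartholdi--Kaimanovich--Nekrashevych), and taking $G_j$ to be an $\Omega$-construction over the larger group $\langle \SAff_n(\F_p), \SAff_n(\Z)\rangle$, which contains $\SL_n(\Z)$ and hence a free subgroup. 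The extra generators act \emph{non-trivially} on every level --- via $\SAff_n(\Z/p^\ell\Z)$ --- but the point is that on each level the image of $G_j$ is already the full iterated wreath product $\wr_X^\ell \SAff_n(\F_p)$, which is also the image of $A$. The inclusion then induces a profinite isomorphism by Corollary~\ref{cor:induced-isomorphism-general}. Your outline would need to be rewritten around this ``same closure, both with CSP'' idea rather than ``extra generators profinitely invisible''.
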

We note that the fibre product construction used in \cite{PlatonovTavgen} and \cite{BridsonGrunewald} is unable to provide such examples, since a fibre product  $P \subseteq H \times H$ projects onto $H$ and thus, it is non-amenable exactly if $H$ is non-amenable (e.g., a non-elementary word hyperbolic group as in \cite{BridsonGrunewald}). Uncountable families of finitely generated \emph{amenable} groups with isomorphic profinite completions  were constructed in \cite{Nekrashevych14,Pyber04}.

 The groups in Theorem \ref{thm:main-theorem} are just-infinite branch groups with the congruence subgroup property. The construction is inspired by a method of Segal \cite{Segal01} and influenced by ideas of Nekrashevych \cite{Nekrashevych14}. The method is rather flexible and allows us to merge a \emph{perfect} residually finite group (e.g.~$\SL_n(\Z)$) with a related amenable group in such a way that the amenable group and the merged group have isomorphic profinite completions. The first step bears similarity with the Sidki-Wilson construction of branch groups with non-abelian free subgroups \cite{SidkiWilson03}.

Let us note that without the requirement of finite generation in $(*)$ there are obvious counterexamples. For instance, $A = \bigoplus_n \mathrm{Alt}(n)$  is amenable as a direct limit of finite groups and $G = \prod_n \mathrm{Alt}(n) = \widehat{A} = \widehat{G}$ contains a non-abelian free group. It would be interesting to have finitely presented counterexamples to $(*)$.
Since our family $(G_j)_{j \in J}$ is uncountable, it is clear that most of the groups cannot be finitely presented. We were unable to verify that none of these groups  admits a finite presentation.

\smallskip

Thinking of amenability as a concept of analytical nature  (e.g., existence of a left invariant mean on $\ell^\infty(G)$), it doesn't seem surprising that the answer to $(*)$ is negative.  From a different perspective, though, amenability is not far from being detectable on finite quotients. 
H.~Kesten \cite{Kesten59} characterized amenability in terms of the spectral radius of symmetric random walks. For a residually finite group $G$ a random walk can surely be studied by looking at large finite quotients of $G$. Trying to exploit this relation one realizes that a \emph{uniform} behavior of all random walks on all finite quotients can be used to deduce amenability of $G$. 

In the 70's G.~Keller defined \cite{Keller72} a notion of \emph{uniform amenability} by imposing that
the size of an  $\varepsilon$-F{\o}lner set for a generating set $S$ can be uniformly bounded in terms of $\epsilon$ and $|S|$ (see Definition \ref{def:uniform-foelner}). A couple of years later the concept was independently defined by Bo\.zejko \cite{Bozejko80}. Wysocza\'nski \cite{Wyso88} showed that
uniform amenability can be characterized in terms of a uniform Kesten condition. This leads to the following result.

 \begin{theorem}\label{thm:uniform-amenability-profinite}
 Let $G_1, G_2$ be residually finite groups with $\widehat{G}_1 \cong \widehat{G}_2$. Then $G_1$ is uniformly amenable if and only if $G_2$ is uniformly amenable.
 \end{theorem}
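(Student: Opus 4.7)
The plan is to exploit Wysoczański's characterization of uniform amenability via a uniform Kesten-type condition on return probabilities, and to observe that this condition is intrinsic to the inverse system of finite quotients of $G$; since $\widehat{G}_1 \cong \widehat{G}_2$ gives identical families of finite quotients, the conclusion will follow.

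First, I would record Wysoczański's criterion in the following form: $G$ is uniformly amenable if and only if there exists a function $k_0\colon \N \times (0,1) \to \N$ such that for every symmetric multi-set $S \subseteq G$ of cardinality $n$ with $e \in \supp S$ and every $\varepsilon > 0$, the return probability of the $S$-random walk at time $2k_0(n,\varepsilon)$ satisfies
\[
p_{2k_0}^{G,S}(e,e) \;:=\; n^{-2k_0}\,\#\bigl\{(s_1,\dots,s_{2k_0}) \in S^{2k_0} : s_1\cdots s_{2k_0} =_G e\bigr\} \;\geq\; (1-\varepsilon)^{2k_0}.
\]
Morally this is Kesten's equality $\rho(M_S)=1$ forced to hold at a uniform finite time horizon. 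Passing from the spectral statement of \cite{Wyso88} to this finite-time form is a Fekete-type argument, using the submultiplicativity $p_{2k+2m}^{S} \geq p_{2k}^S p_{2m}^S$ to ensure that $p_{2k}^{1/(2k)}$ monotonically approaches $\rho(M_S)$; call the displayed condition $(\ast)$.

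The key step is to observe that $(\ast)$ is really a condition on the system of finite quotients of $G$. The quantity $p_{2k}^{G,S}(e,e)$ only counts words of fixed length $2k$, which is a finite combinatorial datum; so by residual finiteness, for any finite $S$ and any $k$ there is a finite quotient $\pi\colon G \twoheadrightarrow Q$ injective on the ball $B_S(e,2k)$, and for any such quotient one has the exact equality $p_{2k}^{G,S}(e,e) = p_{2k}^{Q,\pi(S)}(e,e)$. Therefore $(\ast)$ is equivalent to the parallel statement $(\ast_{\mathrm{fin}})$: some common $k_0(n,\varepsilon)$ works uniformly for all $n$-element symmetric multi-sets in all finite quotients of $G$. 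The forward direction is immediate from the monotonicity $p_{2k}^Q \geq p_{2k}^G$ (identities in $G$ remain identities in any quotient); the converse uses the displayed identity with $Q$ chosen to separate $B_S(e,2k_0)$.

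Since $\widehat{G}_1 \cong \widehat{G}_2$ yields matching families of finite quotients of $G_1$ and $G_2$, the reformulated condition $(\ast_{\mathrm{fin}})$ is shared by both, and Wysoczański's equivalence closes the argument. The main obstacle I anticipate is extracting Wysoczański's criterion in a form that is manifestly a condition at a \emph{finite} time $k_0$ rather than about the asymptotic spectral radius $\rho(M_S)$; the Fekete-type conversion sketched above handles this. A minor technicality is lifting subsets from finite quotients with the correct cardinality, which is cleanest if one works with symmetric multi-sets throughout so that collisions under $\pi$ are absorbed as multiplicity.
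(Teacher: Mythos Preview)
Your approach is correct but follows a genuinely different route from the paper. The paper never touches random walks for this result; instead it introduces a \emph{uniform Reiter condition} (a bound on the support of an almost-invariant probability measure in terms of $\epsilon$ and $|S|$), proves it equivalent to uniform amenability, and then shows directly (Theorem~\ref{thm:residual-argument}) that a residually finite group $G$ is uniformly amenable if and only if the \emph{class} of its finite quotients is uniformly amenable. A one-line corollary then says that a profinite group is uniformly amenable whenever some dense subgroup is, and Theorem~\ref{thm:uniform-amenability-profinite} follows. The authors explicitly remark that the Kesten-based argument you outline is a viable alternative they chose not to write out. Your route has the virtue that the passage to finite quotients is maximally transparent: a return probability at a fixed time is literally a count of words of bounded length, and residual finiteness lets you evaluate it exactly in a single sufficiently fine quotient. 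The paper's route, in exchange, is self-contained (no appeal to \cite{Wyso88}) and produces as by-products the uniform isoperimetric and Reiter characterizations and a short proof that uniformly amenable groups satisfy a law. One small correction to your sketch: Fekete's lemma gives $p_{2k}^{1/(2k)} \to \sup_k p_{2k}^{1/(2k)} = \rho(M_S)$, not monotone convergence, so to pass from the single-time bound at $k_0$ to a bound at all large times you should use supermultiplicativity along multiples of $k_0$ together with the crude estimate $p_{2r}^{S} \geq |S|^{-2r}$ (available because $e \in S$) to handle the remainder, after first applying $(\ast)$ with a slightly tightened~$\varepsilon$.
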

 As of today the collection of finitely generated groups which are amenable but not uniformly so is rather small and this explains why the construction of counterexamples to $(*)$ actually requires some effort. In turn, the group $A$ of Theorem~\ref{thm:main-theorem} is a new example of an amenable group which is not uniformly amenable.

 There are several ways to prove Theorem \ref{thm:uniform-amenability-profinite} and already Keller's results \cite{Keller72} point in this direction. Here we don't work out the random walk argument sketched above. Instead, we first establish new characterizations of uniform amenability in terms of a uniform isoperimetric inequality and a uniform Reiter condition and use them to give a short proof of the theorem. In addition, we give a new short proof that a uniformly amenable group satisfies a law (a result of Keller \cite{Keller72}). This implies that the profinite completion of a uniformly amenable group is positively finitely generated in the sense of A.~Mann~\cite{Mann96}. Our results on uniform amenability are discussed in Section~\ref{sec:uniform-amenability}.
 
 \medskip
 
We now give an overview of the remaining sections. In Section~\ref{sec:trees-and-construction} we present the basic construction which will be applied throughout. This construction takes a perfect, self-similar subgroup $G \leq \Aut(T_X)$ of the automorphism group of a regular rooted tree $T_X$ and produces -- under a condition introduced by Segal \cite{Segal01} -- a 
branch group $\Gamma_G^\Omega \leq \Aut(T_X)$. The construction depends on a certain subset $\Omega$ of the boundary of the tree.
In Section~\ref{sec:csp} we show that these branch groups are just infinite and have the congruence subgroup property. We deduce that the profinite completion is always an iterated wreath product which only depends on the action of $G$ on the first level of the tree. As a consequence we obtain a zoo of groups with isomorphic completions and inclusions between these groups induce profinite isomorphisms. In Section \ref{sec:uncountable} we use a rigidity result of Lavrenyuk and Nekrashevych \cite{LavrenyukNekrashevych02} to show that the construction (without additional assumptions) gives rise to an uncountable family of pairwise non-isomorphic groups. Finally we discuss concrete examples in Sections \ref{sec:matrix-groups} and \ref{sec:amenable}. To obtain the amenable group $A$ in Theorem \ref{thm:main-theorem} we apply the construction to the special affine group $\F_p^n \rtimes \SL_n(\F_p)$ acting on the $p^n$-regular rooted tree by rooted automorphisms. It follows from a result of Bartholdi, Kaimanovich and Nekrashevych \cite{BartholdiKaimanovichNekrashevych10} that the result is an amenable group (for suitable parameters $\Omega$). On the other hand, we apply the construction to the special affine group $\Z^n \rtimes \SL_n(\Z)$ which acts self-similarly on the $p^n$-regular rooted tree (obtained from the pro-$p$ completion of $\Z^n$). Merging these groups with $A$, we obtain the family $(G_j)$ of non-amenable groups in Theorem~\ref{thm:main-theorem}.

\section{Uniformly amenable classes of groups}\label{sec:uniform-amenability}
In this section we study \emph{uniform amenability} of groups and classes of groups. The concept was introduced by Keller in \cite{Keller72} and independently by Bo\.zejko \cite{Bozejko80}. Here we establish new equivalent characterizations and use them to prove that uniform amenability is a profinite property. Let us begin with the usual characterization using a \emph{uniform F\o{}lner condition}.
\begin{definition}\label{def:uniform-foelner}
A class of groups $\mathfrak{K}$ is \emph{uniformly amenable}
if there is a function $m\colon \R_{>0} \times \N \to \N$
such that for every $\epsilon > 0$, all $G \in \mathfrak{K}$, and every finite subset $S \subseteq G$,
there is a finite set $F \subseteq G$ satisfying
\begin{enumerate}
 \item $|F| \leq m(\epsilon, |S|)$ and
 \item $|SF| \leq (1+\epsilon)|F|$.
\end{enumerate}
In this case we say that $\mathfrak{K}$ is $m$-uniformly amenable.
We say that a group $G$ is uniformly amenable, if the class consisting of $G$ is uniformly amenable.

We will always assume that $m$ is non-decreasing in the second argument; this can be achieved by replacing $m$ by $m'(\epsilon,N) = \max_{k \leq N} m(\epsilon, k)$.
\end{definition}
\begin{example}
(1) Let $d \in \N$ be given. 
The class $\mathfrak{Fin}_d$ of all finite groups of order at most $d$ is uniformly amenable for the function
$m(\epsilon, N) = d$; indeed, the finite group $G$ itself is always a suitable F\o{}lner set.

(2) The class of abelian groups is uniformly amenable.

(3) Extensions of uniformly amenable groups are uniformly amenable \cite[Thm.~3]{Bozejko80}. In particular, every virtually solvable group is uniformly amenable.

(4) Direct unions of $m$-uniformly amenable groups are $m$-uniformly amenable. In particular, ascending HNN-extensions of uniformly amenable groups are uniformly amenable.

(5)
Let $\mathfrak{K}$ be a class of groups such that the free group $F_2$ of rank $2$ is residually $\mathfrak{K}$, then $\mathfrak{K}$ is not uniformly amenable (this will follow from Corollary \ref{cor:unif-amen-grps-satisfy-laws} below).
In particular, every class of groups which contains all finite symmetric groups is \emph{not} uniformly amenable.
\end{example}
\begin{example}
Let $G$ be a finite group. The direct power $G^{I}$ is uniformly amenable for every set $I$.
Given a positive integer $n$, the set of all $n$-tuples $X \defeq G^n$ in $G$ is finite. We enumerate the elements,
say $X = \{x^{(1)},\dots, x^{(k)}\}$ where $k = |G|^n$. Here $x^{(i)} = (x_1^{(i)}, \dots, x_n^{(i)})$.
Consider the group $G^{k}$ with the ``universal'' $n$-element subset $U = \{u_1, \dots, u_n\}$ where $u_j = (x_j^{(1)}, x_j^{(2)},\dots,x_j^{(k)})$.

Let $S \subseteq G^{I}$ be a subset with $n$-elements, say $S = \{s_1,\dots,s_n\}$. 
For every $i\in I$ we obtain an $n$-tuple $S(i) \defeq (s_1(i),s_2(i),\dots,s_n(i)) \in X$,
and we obtain a map $t\colon I \to \{1,\dots,k\}$ such that $S(i) = x^{(t(i))}$.
The homomorphism $\alpha\colon G^k \to G^{I}$ defined by $\alpha(g_1,\dots,g_k)(i) = g_{t(i)}$ 
maps the universal set $U$ to $S$. Therefore the subgroup generated by $S$ is isomorphic to a subfactor of $G^k$ and we deduce that $G^{I}$ is uniformly amenable.
\end{example}
\begin{definition}
A class of groups $\mathfrak{K}$ satisfies a \emph{uniform isoperimetric inequality}, if 
there is a function $\widetilde{m}\colon \R_{>0} \times \N \to \N$ such that for all $\epsilon > 0$, every $G \in \mathfrak{K}$ and every finite symmetric subset $S \subseteq G$
there is a finite subset $E \subseteq G$ with $|E| \leq \widetilde{m}(\epsilon, |S|)$ and
\[ \frac{|\partial_{S} E|}{|E|} \leq \epsilon \]
where $\partial_{S} E = SE \setminus E$ denotes the $S$-boundary of $E$.
\end{definition}
\begin{lemma}\label{lem:isoperimetric}
A class of groups $\mathfrak{K}$ is uniformly amenable if and only if it satisfies a uniform isoperimetric inequality.
\end{lemma}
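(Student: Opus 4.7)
The proof is an essentially routine translation between the Følner and isoperimetric formulations, exploiting the identity
\[
|SE| \;=\; |E| + |\partial_S E|
\]
whenever $e \in S$, and the inclusions $SE \subseteq S'E$ and $\partial_S E \subseteq \partial_{S'} E$ whenever $S \subseteq S'$. The main point is to carry these through while keeping track of the bounds $m(\epsilon,|S|)$ versus $\widetilde{m}(\epsilon,|S|)$.

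For the forward direction, assume $\mathfrak{K}$ is $m$-uniformly amenable and let $S \subseteq G$ be a finite symmetric subset of some $G \in \mathfrak{K}$. Replace $S$ by $S' \defeq S \cup \{e\}$, which has at most $|S|+1$ elements. Applying the uniform Følner condition to $S'$ yields a finite set $F$ with $|F| \leq m(\epsilon,|S|+1)$ and $|S'F| \leq (1+\epsilon)|F|$. Since $e \in S'$ we have $F \subseteq S'F$, so
\[
|\partial_{S'} F| \;=\; |S'F| - |F| \;\leq\; \epsilon |F|,
\]
and $\partial_S F \subseteq \partial_{S'} F$ gives $|\partial_S F|/|F| \leq \epsilon$. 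Thus a uniform isoperimetric inequality holds with $\widetilde{m}(\epsilon,N) \defeq m(\epsilon,N+1)$.

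For the converse, assume $\mathfrak{K}$ satisfies a uniform isoperimetric inequality with function $\widetilde{m}$, and let $S \subseteq G$ be an arbitrary finite subset. Form the symmetric set $S' \defeq S \cup S^{-1} \cup \{e\}$, which has at most $2|S|+1$ elements. Apply the uniform isoperimetric inequality to $S'$: there exists $E \subseteq G$ with $|E| \leq \widetilde{m}(\epsilon, 2|S|+1)$ and $|\partial_{S'} E| \leq \epsilon |E|$. Since $S \subseteq S'$ and $e \in S'$,
\[
|SE| \;\leq\; |S'E| \;=\; |E| + |\partial_{S'} E| \;\leq\; (1+\epsilon)|E|,
\]
so $E$ is an $\epsilon$-Følner set for $S$, witnessing uniform amenability with $m(\epsilon,N) \defeq \widetilde{m}(\epsilon,2N+1)$.

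There is no real obstacle here; the only thing requiring a moment of care is the asymmetry between the two definitions (arbitrary $S$ in Følner versus symmetric $S$ in the isoperimetric condition), which is handled by passing to $S \cup \{e\}$ in one direction and to $S \cup S^{-1} \cup \{e\}$ in the other. The resulting bounds on the control functions depend only on $|S|$, as required.
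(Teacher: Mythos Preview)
Your proof is correct and follows essentially the same approach as the paper's: in both directions one passes to a slightly larger set (adjoining the identity for the forward direction, symmetrizing for the converse) and uses $|S^*E| = |E| + |\partial_{S^*} E|$ when $e \in S^*$. The only cosmetic differences are that the paper does not bother adjoining the identity in the converse direction (since $|TE| \leq |E| + |\partial_T E|$ holds regardless), and it handles possible non-monotonicity of $\widetilde{m}$ by defining $m(\epsilon,N) = \max_{k \leq 2N} \widetilde{m}(\epsilon,k)$ rather than simply $\widetilde{m}(\epsilon,2N+1)$.
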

\begin{proof}
Assume that $\mathfrak{K}$ is $m$-uniformly amenable. We define $\widetilde{m}(\epsilon,N) \defeq m(\epsilon, N+1)$.
Let $\epsilon > 0 $ be given, let $G \in \mathfrak{K}$ and let $S \subseteq G$ be a finite symmetric subset. We define $S^* = S \cup \{1_G\}$. Uniform amenability provides a F\o{}lner set $E \subseteq G$ with $|E| \leq m(\epsilon,N+1)$ and 
\[
	|S^*E| \leq (1+\epsilon)|E|.
\]
Since $S^*E = E \cup \partial_{S}E$ the assertion follows. 

Assume conversely that $\mathfrak{K}$ satisfies a uniform isoperimetric inequality with respect to $\widetilde{m}$. We define 
$m(\epsilon, N) = \max_{k \leq 2N} \widetilde{m}(\epsilon,k)$. Let $\epsilon > 0$, $G \in \mathfrak{K}$, and a finite set $S \subseteq G$ be given. Define $T = S \cup S^{-1}$.
By assumption, there is a finite subset $E \subseteq G$ with $|E| \leq m(\epsilon, |T|) = \max_{k \leq 2|S|} \widetilde{m}(\epsilon,k) =  m(\epsilon, |S|)$ which satisfies 
\[\frac{|\partial_T E|}{|E|} \leq \epsilon. \]
We obtain
\[ |SE| \leq |TE| \leq |E| + |\partial_T E| \leq (1+\epsilon)|E|.\qedhere\]
\end{proof}
\begin{definition}
A class of groups $\mathfrak{K}$ satisfies the \emph{uniform Reiter condition}, if there is a function $r \colon \R_{>0} \times \N \to \N$ such that for all $\epsilon > 0$, every $G \in \mathfrak{K}$, and every finite subset $S \subseteq G$, there is a finitely supported probability measure $\mu$ on $G$ such that $|\supp(\mu)| \leq r(\epsilon,|S|)$ and 
\begin{equation}\label{eq:reiter-inequ}
	\Vert \lambda^*_g(\mu)  - \mu \Vert_{\ell^1} < \epsilon
\end{equation}
for all $g \in S$. Here $\lambda^*_g(\mu)$ denotes the pullback of $\mu$ with respect to the left multiplication with $g$, i.e.,
$ \lambda^*_g(\mu) (A) = \mu(gA)$.
\end{definition}
\begin{proposition}
A class $\mathfrak{K}$  of groups is uniformly amenable if and only if it satisfies the uniform Reiter condition.
\end{proposition}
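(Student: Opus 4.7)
The plan is to adapt the classical equivalence of the F{\o}lner and Reiter characterizations of amenability, carefully tracking the dependence on $|S|$. Both directions will produce the witnessing control function explicitly from the one in the hypothesis.

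For the forward direction, assume $\mathfrak{K}$ is $m$-uniformly amenable. Given $\epsilon>0$, a group $G\in\mathfrak{K}$, and a finite subset $S\subseteq G$, I would first pass to the symmetric set $T\defeq S\cup S^{-1}$ of size at most $2|S|$ and apply uniform amenability to $T$ with parameter $\epsilon/2$. This yields a finite set $F\subseteq G$ with $|F|\leq m(\epsilon/2,2|S|)$ and $|TF|\leq(1+\epsilon/2)|F|$. The normalized indicator $\mu\defeq|F|^{-1}\mathbf{1}_F$ is the candidate Reiter measure: one computes $\Vert\lambda^*_g(\mu)-\mu\Vert_{\ell^1}=|F|^{-1}|g^{-1}F\triangle F|=2|F|^{-1}|g^{-1}F\setminus F|$, and for $g\in S$ the element $g^{-1}$ lies in $T$, so $|g^{-1}F\setminus F|\leq|TF\setminus F|\leq(\epsilon/2)|F|$. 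Thus $r(\epsilon,N)\defeq m(\epsilon/2,2N)$ witnesses the uniform Reiter condition.

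For the converse, given the uniform Reiter condition with function $r$, apply it with the sharper parameter $\delta\defeq 2\epsilon/|S|$ to obtain a probability measure $\mu$ with $|\supp(\mu)|\leq r(\delta,|S|)$ and $\Vert\lambda^*_g(\mu)-\mu\Vert_{\ell^1}<\delta$ for every $g\in S$. The key tool is the layer cake decomposition: with $A_t\defeq\{x\in G:\mu(x)>t\}$ and $\mu=\int_0^\infty\mathbf{1}_{A_t}\,dt$ pointwise, a standard computation yields
\[
\Vert\lambda^*_g(\mu)-\mu\Vert_{\ell^1}=\int_0^\infty|g^{-1}A_t\triangle A_t|\,dt,
\]
while $\int_0^\infty|A_t|\,dt=\Vert\mu\Vert_{\ell^1}=1$. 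Summing the first identity over $g\in S$ and comparing with $|S|\delta=2\epsilon$ times the second gives $\int_0^\infty\bigl(\sum_{g\in S}|g^{-1}A_t\triangle A_t|-2\epsilon|A_t|\bigr)\,dt<0$, so some level $t^*>0$ with $A_{t^*}\neq\emptyset$ satisfies $\sum_{g\in S}|g^{-1}A_{t^*}\triangle A_{t^*}|<2\epsilon|A_{t^*}|$.

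Setting $F\defeq A_{t^*}$ and using the elementary bound $|SF\setminus F|\leq\sum_{g\in S}|gF\setminus F|=\tfrac{1}{2}\sum_{g\in S}|g^{-1}F\triangle F|$ (the last equality because $|gA|=|A|$), one obtains $|SF|\leq(1+\epsilon)|F|$ while $|F|\leq|\supp(\mu)|\leq r(2\epsilon/|S|,|S|)$, so $m(\epsilon,N)\defeq r(2\epsilon/N,N)$ does the job. The main technical point is the layer cake identity, which converts the analytic $\ell^1$-closeness condition into a set-theoretic statement about symmetric differences of level sets; a simple averaging argument then extracts a single level set with the desired F{\o}lner-type property. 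Quantitatively the converse direction loses a factor $|S|$ in the $\epsilon$-argument, but since $m$ and $r$ are allowed to be arbitrary functions $\R_{>0}\times\N\to\N$, this is harmless.
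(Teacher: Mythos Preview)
Your approach is essentially the paper's: both directions use the same ideas (normalized indicator of a F\o{}lner set for the forward implication, layer-cake decomposition plus averaging for the converse). The paper routes the argument through its uniform isoperimetric inequality lemma, while you pass directly between the F\o{}lner and Reiter conditions, but this is cosmetic.

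There is, however, one genuine slip in your forward direction. From $|TF|\leq(1+\epsilon/2)|F|$ you deduce $|TF\setminus F|\leq(\epsilon/2)|F|$, but this subtraction is only valid when $F\subseteq TF$, and your set $T=S\cup S^{-1}$ need not contain $1_G$. For a concrete failure, take $G=\Z$, $T=\{\pm 1\}$, and $F=\{0,2,4,\dots,2k\}$: then $|TF|=|F|+1$ while $TF\cap F=\emptyset$, so $|TF\setminus F|=|F|+1$ is not small at all. The fix is immediate---apply uniform amenability to $T\cup\{1_G\}$ rather than $T$, replacing $m(\epsilon/2,2N)$ by $m(\epsilon/2,2N+1)$; this is exactly the adjustment the paper makes when it passes to $S^*=S\cup\{1_G\}$ in its isoperimetric lemma. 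With that change (and, if you want the strict inequality in the Reiter condition, using a parameter strictly smaller than $\epsilon/2$), your argument goes through.
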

\begin{proof}
Assume that $\mathfrak{K}$ is uniformly amenable. By Lemma \ref{lem:isoperimetric} the class $\mathfrak{K}$ satisfies a uniform isoperimetric inequality w.r.t.~a function $\widetilde{m}$. Let $\epsilon >0$, $G \in \mathfrak{K}$ and $S \subseteq G$ be given. Put $S^* = S \cup S^{-1}$.
There is a finite subset $E \subseteq G$ with $|E| \leq \widetilde{m}(\epsilon, 2|S|)$ for which $\frac{|\partial_{S^*} E|}{|E|} < \epsilon$.
Let $\mu$ be the uniform probability measure supported on $E$. Since $|\partial_{S^*} E| < \epsilon |E|$, we have $|g^{-1}E \Delta E| < \epsilon |E|$ for all $g \in S$ and thus
\[
	\Vert  \lambda^*_g(\mu)  - \mu \Vert_{\ell^1} = \frac{|g^{-1}E \Delta E|}{|E|} < \epsilon
\]
Conversely, assume that $\mathfrak{K}$ satisfies the uniform Reiter condition. Let $\epsilon' > 0$, $G \in \mathfrak{K}$ and a finite symmetric subset $S \subseteq G$ be given. Set $\epsilon = \epsilon' / |S|$. Using the uniform Reiter condition, we find a finitely supported probability measure $\mu$ on $G$ with $|\supp(\mu)|\leq r(\epsilon,|S|)$ which satisfies \eqref{eq:reiter-inequ}.  For all $t \in [0,1]$ we define the level set $E_\mu(t) = \{ g \in G \mid \mu(\{g\}) \geq t\}$.
We claim that some level set satisfies a suitable isoperimetric inequality.
Summing the equality $|\lambda^*_g(\mu)(\{x\})-\mu(\{x\})| = \int_0^1 |1_{E_\mu(t)}(gx)-1_{E_\mu(t)}(x)| \;dt$ over all $x \in G$ we obtain
\[
	\Vert \lambda^*_g(\mu)-\mu\Vert_{\ell^1} = \int_0^1 \sum_{x \in G}  |1_{E_\mu(t)}(gx)-1_{E_\mu(t)}(x)| \;dt = \int_0^1 |g^{-1}E_\mu(t)\Delta E_\mu(t)| dt
\]
Taking the sum over all $g \in S$
we see that
\[
	\epsilon' = |S| \epsilon > \sum_{g \in S} \Vert \lambda^*_g(\mu)-\mu\Vert_{\ell^1} = \int_{0}^1 \sum_{g \in S} |g^{-1}E_\mu(t)  \Delta E_\mu(t)| \, dt \geq \int_0^1 |\partial_{S} E_\mu(t)| \,dt.
\]
Suppose for a contradiction that $|\partial_S E_{\mu}(t)| > \epsilon' |E_\mu(t)|$ for all $t$. Then the last integral can be estimated by 
\[
	 \int_0^1 |\partial_{S} E_\mu(t)| \,dt > \epsilon' \int_{0}^1 |E_\mu(t)| \,dt = \epsilon' 
\]
which yields a contradiction.
\end{proof}
\begin{remark}
It was proven in \cite{Wyso88} that uniform amenability of groups can be characterized by a uniform version of Kesten's condition on random walks. The argument given there -- based on a theorem of Kaimanovich (see \cite{Kaimanovich-80} or \cite[Thm.~5.2]{Kaimanovich-Vershik-83}) -- directly generalizes to classes and shows that
a uniformly amenable class of groups $\mathfrak{K}$ satisfies a \emph{uniform Kesten condition}:\\ 
There is a function $\kappa\colon \R_{>0} \times \N \to \N$ such that
for every $\epsilon  > 0$, every $G \in \mathfrak{K}$, every finitely supported symmetric probability measure $\mu$ on $G$
and all $n \geq \kappa(\epsilon, |\supp(\mu)|)$
\[
	\mathbb{P}(X_{2n} = 1_G)^{\frac{1}{2n}} > 1 - \epsilon
\]
where $X_n$ denotes the $\mu$-random walk on $G$ starting at the identity $1_G$.
\end{remark}
\begin{proposition}\label{prop:quotients}
Let $\mathfrak{K}$ be a uniformly amenable class of groups. The class of all quotients of groups in $\mathfrak{K}$ is uniformly amenable.
\end{proposition}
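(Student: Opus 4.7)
The plan is to leverage the uniform Reiter characterisation from the preceding proposition rather than work directly with F\o{}lner sets. Given a surjection $\pi\colon G \twoheadrightarrow H$ with $G \in \mathfrak{K}$ and a F\o{}lner set $F \subseteq G$ for a lift of $S \subseteq H$, the image $\pi(F)$ can be strictly smaller than $F$, so the inequality $|\pi(\widetilde{S}F)| \leq (1+\epsilon)|F|$ does not translate in an obvious way into a F\o{}lner bound expressed in terms of $|\pi(F)|$. Pushing forward probability measures avoids this issue entirely, since pushforward is contractive in $\ell^1$ and support sizes can only decrease.

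Concretely, I would fix $G \in \mathfrak{K}$, a surjection $\pi\colon G \twoheadrightarrow H$, a parameter $\epsilon > 0$, and a finite subset $S \subseteq H$. First choose a lift $\widetilde{S} \subseteq G$ with $\pi(\widetilde{S}) = S$ and $|\widetilde{S}| = |S|$ (one preimage per element of $S$). Then apply the uniform Reiter condition for $\mathfrak{K}$ to produce a finitely supported probability measure $\mu$ on $G$ satisfying $|\supp(\mu)| \leq r(\epsilon,|S|)$ and $\Vert \lambda^*_{\tilde g}(\mu) - \mu\Vert_{\ell^1} < \epsilon$ for every $\tilde g \in \widetilde{S}$. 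The candidate on the quotient is $\nu \defeq \pi_* \mu$, which is again a finitely supported probability measure with $|\supp(\nu)| \leq |\supp(\mu)| \leq r(\epsilon,|S|)$.

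The heart of the argument is the fibre identity $\pi^{-1}(hx) = \tilde g\,\pi^{-1}(x)$ valid for any $h = \pi(\tilde g)$ and $x \in H$. Together with the triangle inequality this yields
\[
	\bigl|\lambda^*_h(\nu)(\{x\}) - \nu(\{x\})\bigr| \leq \sum_{y \in \pi^{-1}(x)} \bigl|\lambda^*_{\tilde g}(\mu)(\{y\}) - \mu(\{y\})\bigr|,
\]
and summing over $x \in H$ collapses the inner sums into a single $\ell^1$-norm on $G$, giving $\Vert \lambda^*_h(\nu) - \nu\Vert_{\ell^1} \leq \Vert \lambda^*_{\tilde g}(\mu) - \mu\Vert_{\ell^1} < \epsilon$. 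Hence $\nu$ verifies the uniform Reiter condition for the pair $(H,S)$, and the class of quotients of members of $\mathfrak{K}$ is uniformly amenable with the very same function $r$.

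No serious obstacle is expected. The only substantive point is the compatibility of left translation with fibres, i.e.\ the identity $\pi^{-1}(hx) = \tilde g\,\pi^{-1}(x)$, and this is precisely the reason why the Reiter formulation is preferable to a direct F\o{}lner-set argument here.
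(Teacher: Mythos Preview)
Your proposal is correct and follows essentially the same route as the paper: pass to the uniform Reiter characterisation, lift $S$ to $G$ element by element, obtain a Reiter measure $\mu$ there, and push it forward along $\pi$, using the fibre identity and the triangle inequality to bound $\Vert \lambda^*_h(\pi_*\mu) - \pi_*\mu\Vert_{\ell^1}$ by $\Vert \lambda^*_{\tilde g}(\mu) - \mu\Vert_{\ell^1}$. The paper's proof is line-for-line the same argument, concluding with the same uniform Reiter function $r$.
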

\begin{proof}
Assume that $\mathfrak{K}$ satisfies the uniform Reiter condition for a function $r$. Let $G \in \mathfrak{K}$, 
and let $N \subseteq G$ be a normal subgroup. The canonical projection $G \to G/N$ will be denoted by $\pi$. Given $\epsilon > 0$ and a finite subset $S \subseteq G/N$, we lift $S$ to a finite subset $S'$ in $G$, i.e., $\pi(S') = S$ and $|S'| = |S|$.
There is a finitely supported probability measure $\mu'$ on $G$ with $|\supp(\mu')| \leq r(\epsilon,|S|)$ that satisfies
\[
	\Vert \lambda^*_g(\mu') - \mu' \Vert_{\ell^1} < \epsilon
\]
for all $g \in S'$. Let $\mu = \pi_*(\mu')$ be the pushforward measure on $G/N$. Clearly the support of $\mu$ has at most as many elements as the support of $\mu'$. 
Moreover,
\begin{align*}
	\Vert \lambda^*_{gN}(\mu) - \mu\Vert_{\ell^1} & = \sum_{x \in G/N} |\mu(\{gx\})-\mu(\{x\})|
	= \sum_{x \in G/N} |\sum_{w \in x} \mu'(\{gw\})-\mu'(\{w\})| \\
	&\leq \sum_{h\in G} |\mu'(\{gh\})-\mu'(\{h\})| 
	= \Vert \lambda^*_g(\mu') - \mu' \Vert_{\ell^1} < \epsilon
\end{align*}
for all $g \in S'$. Therefore the class of factor groups satisfies the uniform Reiter condition for the same function $r$.
\end{proof}
\begin{theorem}\label{thm:residual-argument}
Let $G$ be a group and let $\mathcal{F}$ be a filter base\footnote{$\mathcal{F}$ is a non-empty set of normal subgroups, such that for all $N,M \in \mathcal{F}$ the intersection $N\cap M$ contains an element of $\mathcal{F}$. } of normal subgroups with 
$\bigcap \mathcal{F} = \{1_G\}$.
The group $G$ is uniformly amenable if and only if the class $\{G/N \mid N \in \mathcal{F} \}$ is uniformly amenable. 
\end{theorem}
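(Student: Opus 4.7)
The forward direction is immediate from Proposition~\ref{prop:quotients}: if $G$ is uniformly amenable, then the class of all quotients of $G$ is uniformly amenable with the same Reiter function, and hence so is the subclass $\{G/N : N \in \mathcal{F}\}$.

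For the backward direction, I plan to show that $G$ itself satisfies the uniform Reiter condition with the same function $r$ that controls the class $\{G/N : N \in \mathcal{F}\}$; the equivalence of uniform amenability and the uniform Reiter condition established above then yields uniform amenability of $G$. Fix $\epsilon > 0$ and a finite $S \subseteq G$ with $n \defeq |S|$, and set $R = r(\epsilon, n)$. Consider the finite ball $B = (S \cup S^{-1} \cup \{1_G\})^{2R} \subseteq G$. Since $\bigcap \mathcal{F} = \{1_G\}$ and $\mathcal{F}$ is a filter base, I may pick $N \in \mathcal{F}$ with $N \cap B = \{1_G\}$; in particular $\pi_N$ is injective on $B$, and a fortiori on $S$. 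Applying uniform Reiter in $G/N$ then provides a finitely supported probability measure $\bar\mu$ on $G/N$ with $|\supp(\bar\mu)| \leq R$ and $\Vert \lambda^*_{\pi_N(s)}\bar\mu - \bar\mu \Vert_{\ell^1} < \epsilon$ for all $s \in S$.

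The core of the argument is to lift $\bar\mu$ to a measure $\mu$ on $G$ in an \emph{$S$-equivariant} way. Set $T = \supp(\bar\mu)$ and form the directed action graph on $T$, with an $s$-labelled edge $x \to \pi_N(s)x$ whenever $s \in S$ and both endpoints lie in $T$. I would construct a section $\sigma \colon T \to G$ of $\pi_N|_T$ satisfying $\sigma(\pi_N(s)x) = s\sigma(x)$ along every edge, by choosing a base vertex in each connected component of the action graph, lifting it arbitrarily to $G$, and propagating the lift along a spanning tree of the component via the edge labels. The measure $\mu$ is then defined by $\mu(\{\sigma(x)\}) \defeq \bar\mu(\{x\})$ for $x \in T$. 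A direct case analysis exploiting the equivariance shows that $\Vert \lambda^*_s \mu - \mu \Vert_{\ell^1}$ admits the same explicit expression as $\Vert \lambda^*_{\pi_N(s)}\bar\mu - \bar\mu \Vert_{\ell^1}$, so $\mu$ is a Reiter measure for $S$ in $G$ with $|\supp(\mu)| \leq R$.

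The main obstacle is to ensure consistency of the equivariant propagation: two different paths in the action graph from one vertex of $T$ to another produce a cycle whose label-product (a word in $S \cup S^{-1}$ evaluated in $G$) must equal $1_G$, not merely project to the identity of $G/N$. Since $T$ has at most $R$ vertices, every such fundamental cycle has length at most $2R - 1$ in the alphabet $S \cup S^{-1}$, so its label-product lies in $N \cap B$; by the choice of $N$ this intersection is trivial and consistency follows. This is precisely how the hypothesis $\bigcap \mathcal{F} = \{1_G\}$ enters the argument, and it is the only place where the filter base structure is used.
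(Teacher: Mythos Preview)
Your proof is correct and takes a genuinely different route from the paper's. The paper argues by compactness: for each $N \in \mathcal{F}$ it considers the set $C_N$ of probability measures on $G$ with support of cardinality at most $r(\epsilon,|S|)$ whose pushforward to $G/N$ satisfies the Reiter inequality, observes that these sets are nested ($C_N \subseteq C_M$ when $N \subseteq M$) and nonempty, and extracts a measure $\nu \in \bigcap_{N} C_N$ by a compactness argument; only \emph{afterwards} is some $N$ chosen small enough that $\pi_N$ separates the points of $S^{-1}\supp(\nu) \cup \supp(\nu)$, so that the $\ell^1$-computation on $G$ matches the one on $G/N$.

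You reverse the order: you fix $N$ \emph{first}, chosen so that $\pi_N$ is injective on the ball $B$ of radius $2R$, and then lift a single Reiter measure from $G/N$. The cost of avoiding compactness is the equivariant-section construction, whose one delicate point---consistency along cycles of the action graph---you handle correctly: any fundamental cycle has length at most $|T| \leq R$ (your bound $2R-1$ is generous but harmless), so its label-product lies in $N \cap B = \{1_G\}$. Once the section is equivariant, $\pi_N$ restricts to a bijection $\sigma(T)\cup s^{-1}\sigma(T) \to T \cup \pi_N(s)^{-1}T$ along which $\mu$ and $\bar\mu$ agree, and the $\ell^1$-norms coincide term by term. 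Your argument is more elementary and fully constructive; the paper's is shorter to state but leans on a compactness step. Both use the hypothesis $\bigcap \mathcal{F} = \{1_G\}$ in the same way---to separate a prescribed finite set---just at different stages.
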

\begin{proof}
Suppose $G$ is uniformly amenable. It follows immediately from Proposition \ref{prop:quotients} that $\{G/N \mid N \in \mathcal{F}\}$ is uniformly amenable. 

For the converse statement assume that $\{G/N \mid N \in \mathcal{F}\}$ satisfies the uniform Reiter condition for a function $r$ (w.l.o.g.~non-decreasing in the second argument).
Let $\epsilon > 0$ and a finite subset $S \subseteq G$ be given. We define $S_N = \pi_N(S) \subseteq G/N$, where $\pi_N\colon G \to G/N$ denotes the canonical projection for all $N \in \mathcal{F}$.

Let $C_N$ denote the set of probability measures $\mu$ on $G$ with $ |\supp(\mu)| \leq r(\epsilon, |S|)$ and whose
pushforward $\mu_N \defeq (\pi_N)_*(\mu)$ to $G/N$ satisfies
\[
	\Vert \lambda_{\pi_N(g)}^*(\mu_N) - \mu_N \Vert_{\ell^1} < \epsilon
\]
for all $g \in S$.
By assumption the sets $C_N$ are non-empty and $C_N \subseteq C_M$, if $N \subseteq M$ (this follows from the proof of Proposition~\ref{prop:quotients}).  Since the supports are bounded, the sets $C_i$ are compact in the topology of pointwise convergence. By compactness there is a measure $\nu \in \bigcap_{N \in \mathcal{F}}^\infty C_N$.
Now, let $N\in \mathcal{F}$ be sufficiently small such that distinct elements in $T = S^{-1}\supp(\nu) \cup \supp(\nu)$ represent distinct cosets in $G/N$.
For all $g \in S$ we obtain
\begin{align*}
   \Vert \lambda_g^*(\nu) - \nu \Vert_{\ell^1}  &= \sum_{x \in T}|\nu(gx)-\nu(x)| =
   \sum_{xN \in TN/N}|\nu_N(gxN)-\nu_N(xN)|\\
   &= \Vert \lambda^*_{\pi_N(g)}(\nu_N) - \nu_N \Vert_{\ell^1} < \epsilon. \qedhere
\end{align*}
\end{proof}
The following result shows that uniform amenability is a profinite property
and immediately implies Theorem \ref{thm:uniform-amenability-profinite}.
\begin{corollary}
Let $H$ be a profinite group. If some dense subgroup $G \subseteq H$ is uniformly amenable, then $H$ is uniformly amenable.
\end{corollary}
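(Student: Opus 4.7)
The plan is to reduce the statement to Theorem \ref{thm:residual-argument} by exhibiting a suitable filter base of normal subgroups of $H$ whose quotients form a uniformly amenable class. Since $H$ is profinite, the collection $\mathcal{F}$ of all open normal subgroups of $H$ is a filter base with $\bigcap \mathcal{F} = \{1_H\}$, and each quotient $H/N$ for $N \in \mathcal{F}$ is finite. Thus it suffices to prove that the class $\{H/N \mid N \in \mathcal{F}\}$ is uniformly amenable.

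The key observation is that the density of $G$ in $H$ forces $GN = H$ for every open normal $N \trianglelefteq H$, because the coset $gN$ of any $g \in H$ is an open neighborhood of $g$ and must therefore meet $G$. Consequently, for each $N \in \mathcal{F}$ the composition $G \hookrightarrow H \twoheadrightarrow H/N$ is surjective, so $H/N$ is a quotient of $G$.

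Since by hypothesis the singleton class $\{G\}$ is uniformly amenable, Proposition~\ref{prop:quotients} implies that the class of all quotients of $G$ is uniformly amenable. In particular, the subclass $\{H/N \mid N \in \mathcal{F}\}$ is uniformly amenable. Applying Theorem~\ref{thm:residual-argument} to $H$ with the filter base $\mathcal{F}$ then yields uniform amenability of $H$.

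I do not expect a genuine obstacle here: the only subtle point is the surjectivity of $G \to H/N$, but this is immediate from density together with the openness of $N$, and the remaining two steps are direct appeals to Proposition~\ref{prop:quotients} and Theorem~\ref{thm:residual-argument}.
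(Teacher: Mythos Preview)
Your proof is correct and follows essentially the same route as the paper: both use the filter base of open normal subgroups of $H$, identify $H/N$ with a quotient of $G$ via density (the paper phrases this as $G/(G\cap N)\cong H/N$), and then invoke Proposition~\ref{prop:quotients} and Theorem~\ref{thm:residual-argument}. Your version is slightly more explicit in justifying the surjectivity of $G\to H/N$, but the argument is the same.
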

\begin{proof}
Let $\mathcal{F}$ be the filter of open normal subgroups of $H$.
Now since $G$ is uniformly amenable, the class $\{G/(G \cap N) \mid N \in \mathcal{F}\}$ is uniformly amenable. 
Since $G$ is dense in $H$, we have $G/(G\cap N) \cong H/N$ for all open normal subgroups $N \leq H$.
The uniform amenability of $H$ follows from Theorem~\ref{thm:residual-argument}.
\end{proof}
The next result is due to Keller \cite[Cor.~5.9]{Keller72}. As it will be used in the corollary below, we include a new short proof based on the uniform Kesten condition.
\begin{corollary}\label{cor:unif-amen-grps-satisfy-laws}
Every class $\mathfrak{K}$ of uniformly amenable groups satisfies a common group law.
\end{corollary}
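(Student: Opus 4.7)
The plan is to exploit the uniform Kesten condition from the preceding remark together with Kesten's classical estimate that the uniform symmetric random walk on $F_2$ with four generators has spectral radius $\sqrt{3}/2 < 1$. I would fix any $\epsilon \in (0, 1 - \sqrt{3}/2)$ and pick an integer $n$ with both $n \geq \kappa(\epsilon, 4)$ (where $\kappa$ is the function from the remark) and $\mathbb{P}(X_{2n}^{F_2} = 1_{F_2}) < (1 - \epsilon)^{2n}$; this is possible since the return probability on $F_2$ decays at rate $\sqrt{3}/2 < 1 - \epsilon$.

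First I would establish a sub-claim: \emph{for every $G \in \mathfrak{K}$ and every pair $a, b \in G$, there is a non-trivial reduced word $r \in F_2$ of length at most $L := 2n$ with $r(a, b) = 1_G$.} Applying the uniform Kesten condition to the uniform probability measure on $\{a^{\pm 1}, b^{\pm 1}\}$ (of support at most $4$; a smaller support already yields an even shorter relation such as $a = 1$ or $a = b$) gives $\mathbb{P}(X_{2n}^G = 1_G) \geq (1 - \epsilon)^{2n}$. On the other hand, the evaluation map $F_2 \to \langle a, b\rangle$ sends length-$2n$ closed walks in $F_2$ to closed walks in $\langle a, b\rangle$, whence $\mathbb{P}(X_{2n}^G = 1_G) \geq \mathbb{P}(X_{2n}^{F_2} = 1_{F_2})$; the strict inequality $\mathbb{P}(X_{2n}^{F_2} = 1_{F_2}) < (1 - \epsilon)^{2n} \leq \mathbb{P}(X_{2n}^G = 1_G)$ then forces some unreduced length-$2n$ word to reduce to a non-trivial $r \in F_2$ yet vanish at $(a, b)$, giving the desired short relation.

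To turn the per-pair short relation into a single common $2$-variable law I would argue by contradiction. Supposing no element of the finite set $B := B_{F_2}(L) \setminus \{1\}$ is a $2$-variable law of $\mathfrak{K}$, for each $r \in B$ one can choose a witness $(G_r, a_r, b_r) \in \mathfrak{K}$ with $r(a_r, b_r) \neq 1_{G_r}$. The diagonal pair $(\bar a, \bar b) := ((a_r)_{r \in B}, (b_r)_{r \in B})$ in the direct product $D := \prod_{r \in B} G_r$ then satisfies $r(\bar a, \bar b) \neq 1_D$ for every $r \in B$, which should contradict the sub-claim applied to $D$.

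The hardest point is that the sub-claim was established for groups in $\mathfrak{K}$ using the uniform Kesten function $\kappa$, while the finite direct product $D$ a priori lies outside $\mathfrak{K}$ and its Kesten function worsens with the number of factors. One must therefore either verify that uniform Kesten with the same $\kappa$ passes to $D$ (or to a suitable quotient in $\mathfrak{K}$, using Proposition \ref{prop:quotients} and Bo\.zejko's extension result recalled in Example~2.2(3)), or reformulate the argument via an ultraproduct: for each $n \geq 1$ produce $(G_n, a_n, b_n) \in \mathfrak{K}$ such that $F_2 \to G_n$, $x \mapsto a_n$, $y \mapsto b_n$, is injective on $B_{F_2}(n)$; then $U := \prod_n G_n / \mathcal{U}$ with a non-principal ultrafilter $\mathcal{U}$ inherits uniform amenability by a \L{}o\'s-type transfer, so cannot contain $F_2$, yet the diagonal pair $([a_n], [b_n])$ would generate one, a contradiction. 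The bookkeeping between $L$, the number of factors in $D$, and the uniform Kesten constants is the main technicality.
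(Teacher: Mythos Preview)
Your sub-claim is correct and coincides with the paper's first step: the uniform Kesten condition yields a uniform bound $L$ such that every pair of elements in any $G\in\mathfrak{K}$ satisfies some non-trivial relation of length at most $L$.

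The gap is in your second step. You try to show that some single word in $B = B_{F_2}(L)\setminus\{1\}$ is already a law for $\mathfrak{K}$, but this need not hold: for $\mathfrak{K} = \{\Z/2,\Z/3\}$ every pair satisfies a relation of length $\le 3$ (namely $x^2$ or $x^3$), yet no word of length $\le 3$ is a law for both groups. Your diagonal pair $(\bar a,\bar b)\in D=\prod_{r\in B}G_r$ does avoid every relation in $B$, but the sub-claim does not apply to $D$ with the \emph{same} bound $L$: uniform amenability passes to a finite product only with a strictly worse F{\o}lner/Kesten function, so the relation guaranteed for $(\bar a,\bar b)$ may have length larger than $L$, and no contradiction arises. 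The ultraproduct reformulation has the same defect. Producing $(G_n,a_n,b_n)\in\mathfrak{K}$ with $F_2\to G_n$ injective on $B_{F_2}(n)$ is impossible for $n\ge L$ by the sub-claim itself, so that construction cannot start; and from the weaker hypothesis ``$\mathfrak{K}$ satisfies no law'' you only get, for each word $w$, a pair refuting $w$ --- not a single pair refuting all words up to a given length, which is what the ultraproduct needs. So both proposed fixes recycle the difficulty rather than resolve it.

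The paper sidesteps the issue by an explicit construction instead of a contradiction. Listing the finitely many words $w_1,\dots,w_k\in B$, one introduces a fresh variable $z$ and forms the iterated commutator
\[
W(x,y,z) \;=\; [\,w_k(x,y),\,[\,w_{k-1}(x,y),\,[\cdots[\,w_1(x,y),\,z\,]\cdots]\,]\,].
\]
If any $w_i(a,b)=1$ then the $i$-th layer collapses and $W(a,b,c)=1$ for all $c$; since by the sub-claim some $w_i$ vanishes on every pair, $W$ is a law for $\mathfrak{K}$. Non-triviality of $W$ in $F(x,y,z)$ follows because at each stage one commutes a non-trivial element of $F(x,y)$ with an element genuinely involving $z$. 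This ``nested commutator with a new letter'' is exactly the device the paper invokes, and it is the missing bridge between your per-pair bound and a global identity.
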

\begin{proof}
It follows from the uniform Kesten condition that there is a number $N$ such that every pair of distinct elements in any group $G \in \mathfrak{K}$ satisfies some relation of length at most $N$.
Since there are finitely many such relations, we can form a nested commutator involving all such relations (possibly involving a new letter to make sure that the result is non-trivial); this nested commutator is a law in $G$.
\end{proof}
\begin{corollary}
Let $G$ be a finitely generated, uniformly amenable group. Then the profinite completion $\widehat{G}$ is positively finitely generated. 
\end{corollary}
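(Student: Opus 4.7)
The plan is to combine Corollary~\ref{cor:unif-amen-grps-satisfy-laws} with a result of Mann~\cite{Mann96} asserting that every finitely generated profinite group satisfying a non-trivial group law is positively finitely generated.

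First I would observe that $\widehat{G}$ is a finitely generated profinite group: the image of any finite generating set of $G$ under the canonical map $G \to \widehat{G}$ topologically generates $\widehat{G}$.

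Next, I would apply Corollary~\ref{cor:unif-amen-grps-satisfy-laws} to the singleton class $\mathfrak{K} = \{G\}$ to obtain a non-trivial word $w \in F_k$ which is a law in $G$. Then $w$ is a law in every finite quotient of $G$. Since $G$ is dense in $\widehat{G}$, every open normal subgroup $N \trianglelefteq \widehat{G}$ gives $\widehat{G}/N \cong G/(G \cap N)$, so $w$ is a law in every finite continuous quotient of $\widehat{G}$. As $\widehat{G}$ embeds into the inverse limit of its finite continuous quotients, $w$ is a law in $\widehat{G}$ itself.

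Finally, Mann's theorem yields that the finitely generated profinite group $\widehat{G}$, satisfying a non-trivial law, is positively finitely generated. There is essentially no substantive obstacle in this plan: the argument is a short chain of implications whose mathematical content is packaged into Corollary~\ref{cor:unif-amen-grps-satisfy-laws} together with the cited theorem of Mann.
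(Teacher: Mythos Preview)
Your argument is correct, and the overall strategy—use Corollary~\ref{cor:unif-amen-grps-satisfy-laws} to get a law, then invoke a black-box theorem to conclude PFG—matches the paper's. The difference lies in the final black box. You argue that the law $w$ passes to $\widehat{G}$ by density and then appeal to a theorem attributed to Mann that a finitely generated profinite group satisfying a non-trivial law is PFG. The paper instead observes that any subquotient of $\widehat{G}$ inherits the law, so some fixed finite group (a large enough symmetric group, which satisfies no short law) fails to occur as a subquotient of $\widehat{G}$; it then invokes \cite[Thm.~1.1]{Borovik-Pyber-Shalev}, which says that a finitely generated profinite group omitting some finite group as a section is PFG. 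The two routes are logically very close—your ``$\widehat{G}$ satisfies a law'' immediately implies the paper's ``some finite group is not a section of $\widehat{G}$''—but you should double-check the attribution: the result you want is usually credited to Borovik--Pyber--Shalev rather than to Mann's original 1996 paper, so citing \cite{Borovik-Pyber-Shalev} (as the paper does) is safer.
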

\begin{proof}
Since $G$ satisfies a non-trivial law $u$, every subquotient satisfies the same law.
However, there is a finite group for which $u$ is not a law (e.g. some large symmetric group). In particular, this finite group is not a subquotient of $G$ and similarly is not a subquotient of $\widehat{G}$. It follows from 
\cite[Thm.\ 1.1]{Borovik-Pyber-Shalev} that $\widehat{G}$ is positively finitely generated.
\end{proof}
\begin{remark}
Keller asked whether a group which satisfies a law is amenable and whether an amenable group which satisfies a law is uniformly amenable. The answer to the first question is negative, since it is known by work of Adyan \cite{Adyan82} that free Burnside groups of large exponent are non-amenable. In fact, they are even uniformly non-amenable \cite{Osin07}.
On the other hand, Zelmanov's solution of the restricted Burnside problem implies that residually finite groups (not necessarily finitely generated) of bounded exponent are uniformly amenable. It is tempting to propose the following variation of Keller's question:
\end{remark}
\begin{question}
Is every family of finite groups which satisfies a common law uniformly amenable?
\end{question}
Let us close this section by noting that in general the class of uniformly amenable groups seems to be poorly understood and it would be fruitful to have more examples. Are there uniformly amenable groups which are not elementary amenable? Are there uniformly amenable groups with intermediate growth?

\section{Groups acting on rooted trees and the $\Omega$-construction}\label{sec:trees-and-construction}
The purpose of this section is to introduce the basic construction of groups we will frequently use. We begin by fixing some basic
terminology from the theory of groups acting on rooted trees.

\subsection{Groups acting on rooted trees}
By a \emph{rooted tree} we will always mean a tree $T$ with a distinguished vertex, called \emph{the root of $T$}, which we will denote by $\emptyset$.
An automorphism of $T$ will always be assumed to fix the root of $T$.
The group of all such automorphisms will be denoted by $\Aut(T)$.
Accordingly, an action of a group $G$ on a rooted tree $T$ is an action by graph isomorphisms that fix the root of $T$.
Let $V(T)$ denote the vertex set of $T$.
The distance of a vertex $v \in V(T)$ to the root $\emptyset$ is called the \emph{level} of $v$ and will be denoted by $\lv(v)$.
Two vertices $v,w \in V(T)$ are called \emph{adjacent} if they are connected by an edge.
In this paper we will mostly be interested in group actions on trees that arise as Cayley graphs of free monoids.
More precisely, let $X$ be a non-empty finite set which one can think of as an alphabet.
Let $X^{\ast}$ denote the free monoid generated by $X$, i.e.\ the set of (finite) words over $X$ with composition given by concatenation of words.
Let $T_{X}$ denote the Cayley graph of $X^{\ast}$ with respect to $X$.
Clearly $T_X$ is a tree and we consider $T_X$ as a rooted tree where the root is the empty word $\emptyset$.
Note that the set $X^{\ell}$ of words of length $\ell \in \N$ is precisely the set of vertices of level $\ell$ in $T_X$.
As every $\alpha \in \Aut(T_X)$ fixes the root of $T_X$, it follows that $\alpha$ preserves the level sets $X^{\ell}$.
Thus for every subgroup $G \leq \Aut(T_X)$ we have a natural homomorphism from $G$ to the symmetry group $\Sym(X^{\ell})$.
If $\ell = 1$, we write $\sigma_g \in \Sym(X)$ to denote the image of $g$ under this homomorphism.
On the other hand, every permutation $\sigma \in \Sym(X)$ gives rise to an automorphism of $T_X$ by defining $\sigma(xw) = \sigma(x)w$ for all $x \in X$ and $w \in X^{\ast}$.
To simplify notation, this automorphism will be denoted by $\sigma$ as well.
Automorphisms obtained in this way will be called \emph{rooted} (here we follow the terminology of \cite{BGS-branch}).
Another important type of automorphism is obtained by letting the direct sum $\Aut(T_X)^{X} \defeq \bigoplus \limits_{x \in X} \Aut(T_X)$ act on $T_X$ via $((g_x)_{x \in X},yw) \mapsto yg_y(w)$ for all $y \in X$ and $w \in X^{\ast}$.
Together with the rooted ones, these automorphism can be used to decompose arbitrary automorphism of $T_X$ as follows.

\begin{definition}\label{def:splitting-an-automorphism}
Let $X$ be a finite set and let $\alpha \in \Aut(T_X)$.
For each $x \in X$ we define the \emph{state} of $\alpha$ at $x$ as the unique automorphism $\alpha_x \in \Aut(T_X)$ that satisfies
\[
\alpha(xw) = \sigma_{\alpha}(x)\alpha_x(w)
\]
for every $w \in X^{\ast}$.
This gives us a decomposition $\alpha = \sigma_{\alpha} \circ (\alpha_x)_{x \in X}$ which is called the \emph{wreath decomposition} of $\alpha$.
\end{definition}

If the alphabet $X$ is clear from the context, we will often just write $(\alpha_x)$ instead of $(\alpha_x)_{x \in X}$.
Note that the wreath decomposition endows us with an isomorphism
\[
\Aut(T_X)
\rightarrow
\Sym(X) \ltimes \Aut(T_X)^{X},\ \alpha \mapsto \sigma_{\alpha} \cdot (\alpha_x).
\]

\begin{definition}\label{def:stabilizer-level-n}
Let $G \leq \Aut(T_X)$ be a subgroup and let $v$ be a vertex of $T_X$.
The subtree of $T_X$ whose vertex set is given by $vX^{\ast}$ will be denoted by $(T_X)_v$.
We write $\St_G(v)$ for the stabilizer of $v$ in $G$.
The \emph{rigid stabilizer} of $v$ in $G$, denoted by $\RiSt_G(v)$, is the subgroup of $\St_G(v)$ that consists of elements $g \in G$ that fix every vertex outside of $(T_X)_v$.
For $\ell \in \N_0$ we further define the \emph{level $\ell$ stabilizer subgroup}
\[
\St_G(\ell) \defeq \bigcap \limits_{v \in X^{\ell}} \St_G(v)
\]
and the \emph{rigid level $\ell$ stabilizer subgroup}
\[
\RiSt_G(\ell) \defeq \langle \bigcup \limits_{v \in X^{\ell}} \RiSt_G(v) \rangle
\]
in $G$.
\end{definition}

Let $G$ be a group that acts on a rooted tree $T$.
We call $G$ a \emph{branch group}, if the index of $\RiSt_{G}(\ell)$ in $G$ is finite for every $\ell \in \N$.
For a subgroup $G \leq \Aut(T_X)$, we say that $G$ is \emph{self-similar}, if for each $g \in G$ the elements $g_x$ in the wreath decomposition $g = \sigma_g \circ (g_x)_{x \in X}$ are contained in $G$.

\begin{notation}\label{def:iota-v}
Given a subgroup $G \leq \Aut(T_{X})$ and a word $v \in X^{\ast}$ of length $\ell$, we consider the embedding $\iota_v \colon G \rightarrow \Aut(T_{X})$ given by
\[
\iota_v(g)(uw) =
\begin{cases}
ug(w),& \text{if } u = v\\
uw,& \text{if } u \neq v
\end{cases}
\]
for every $g \in G$, $w \in X^{\ast}$ and $u \in X^{\ell}$.
\end{notation}

\subsection{The $\Omega$-construction}\label{subsec:construction}

Let us fix a non-empty finite set $X$ and an element $o \in X$.
Let $X^{+} \defeq X \setminus \{o\}$ and let $\mathcal{S}$ denote the space of infinite sequences $(\omega_n)_{n \in \N}$ over $X^{+}$.
We consider the \emph{left shift operator} $L \colon \mathcal{S} \rightarrow \mathcal{S}$ given by $(\omega_1,\omega_2,\omega_3,\ldots) \mapsto (\omega_2,\omega_3,\ldots)$.

\begin{definition}\label{def:omega-elements}
Given a sequence $\omega = (\omega_n) \in \mathcal{S}$, we define the homomorphism
\[
\widetilde{\ \cdot\ }^{\omega} \colon \Aut(T_X) \rightarrow \Aut(T_X),\ \alpha \mapsto \widetilde{\alpha}^{\omega} = (\alpha_x)_{x \in X},
\]
where
\[
\alpha_x =
\begin{cases}
\widetilde{\alpha}^{L(\omega)},& \text{if } x = o\\
\alpha,& \text{if } x = \omega_1\\
\id,              & \text{otherwise.}
\end{cases}
\]
If $G$ is a subgroup of $\Aut(T_X)$, we write $\widetilde{G}^{\omega}$ to denote the image of $G$ under $\omega$.
The group generated by $G$ and $\widetilde{G}^{\omega}$ will be denoted by $\Gamma_{G}^{\omega}$.
More generally, for every non-empty subset $\Omega \subseteq \mathcal{S}$, we define $\Gamma_{G}^{\Omega}$ as the subgroup of $\Aut(T_{X})$ that is generated by all groups $\Gamma_{G}^{\omega}$ with $\omega \in \Omega$.
\end{definition}

Let $G$ be a subgroup of $\Aut(T_X)$. 
Adapting a notion introduced by Segal~\cite{Segal01}, we say that $G$ has \emph{property H} if for all $x,y \in X$ the following hold:
\begin{itemize}
\item $G$ acts transitively on the first level of $T_X$, i.e.\ on $X$.
\item  for all $x\neq y$ in $X$ there exists $g \in \St_G(x)$ with $g(y)\neq y$.
\end{itemize}


\begin{lemma}\label{lem:structure-of-rist}
Let $G \leq \Aut(T_X)$ be a perfect, self-similar subgroup that satisfies property H.
For every $\omega = (\omega_n)_{n \in \N} \in \mathcal{S}$ we have $\iota_{\omega_1}(G) \subseteq \RiSt_{\Gamma_G^{\omega}}(\omega_1)$.
\end{lemma}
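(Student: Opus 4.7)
The inclusion $\iota_{\omega_1}(g)\in\RiSt_{\Aut(T_X)}(\omega_1)$ is immediate from the definition of $\iota_{\omega_1}$, since the automorphism $\iota_{\omega_1}(g)$ acts as the identity outside of $(T_X)_{\omega_1}$. The entire content of the lemma is therefore the membership $\iota_{\omega_1}(g)\in \Gamma_G^{\omega}$ for every $g\in G$. My plan is to isolate the state of $\widetilde{a}^{\omega}$ at $\omega_1$ by a commutator manipulation that exploits both property H and the perfectness of $G$. Apply property H with $x=\omega_1$ and $y=o$ to obtain $h\in\St_G(\omega_1)$ with $\sigma_h(o)\eqdef y\neq o$. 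A direct wreath-decomposition calculation, using the states of $\widetilde{a}^\omega$ (namely $\widetilde{a}^{L(\omega)}$ at $o$, $a$ at $\omega_1$, identity elsewhere), then yields the identity
\[
  [\widetilde{a}^{\omega},h]\cdot(\widetilde{a}^{\omega})^{-1} \;=\; \iota_{\omega_1}\bigl([a,h_{\omega_1}]\,a^{-1}\bigr)\cdot\iota_{y}\bigl(h_o\,\widetilde{a}^{-L(\omega)}\,h_o^{-1}\bigr)
\]
in $\Gamma_G^{\omega}$. The crux is that the right multiplication by $(\widetilde{a}^{\omega})^{-1}$ exactly cancels the $o$-component $\widetilde{a}^{L(\omega)}$ that appears in the commutator, so what remains is supported on the pair of subtrees at $\omega_1$ and $y$.

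Calling this element $\mu(a)\in\Gamma_G^{\omega}$, I then form the product $\mu(a)\mu(a^{-1})$. Since $\widetilde{\ \cdot\ }^{L(\omega)}$ is a homomorphism, the two contributions at $y$ multiply to $h_o\,\widetilde{a}^{-L(\omega)}\,\widetilde{a^{-1}}^{-L(\omega)}\,h_o^{-1}=1$, so the product is now supported solely on $(T_X)_{\omega_1}$. A short calculation using the commutator identity $[a^{-1},c]=a^{-1}[a,c]^{-1}a$ (with $c=h_{\omega_1}$) reduces the surviving $\omega_1$-state to the double commutator $[[a,h_{\omega_1}],a^{-2}]$, giving
\[
  \iota_{\omega_1}\!\bigl([[a,h_{\omega_1}],a^{-2}]\bigr)\in\Gamma_G^{\omega}
\]
for every $a\in G$ and every $h$ provided by property H.

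The final step, and the main obstacle, is to conclude that these double commutators, together with their images under the conjugation action of $G\subseteq\Gamma_G^{\omega}$ and the variation of $h$, exhaust all of $\iota_{\omega_1}(G)$. Here one uses the transitivity of $G$ on $X$ to transport the conclusion between different first-level vertices (so that elements $\iota_v(\cdot)$ for various $v$ can be pulled back to $\iota_{\omega_1}$), together with perfectness $G=[G,G]$, which supplies the commutator representations needed to express arbitrary elements of $G$. The self-similar recursion built into the $\omega$-construction permits the same reduction to be repeated at deeper levels of the tree, which should force the $G$-normal subgroup generated by the double commutators above to coincide with $G$. Verifying this closure step carefully is where the bulk of the work lies, because the image of the state-at-$\omega_1$ homomorphism $\St_G(\omega_1)\to G$ need not a priori be all of $G$, and only after iterating the construction and exploiting self-similarity does one obtain the required generation.
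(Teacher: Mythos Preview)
Your wreath-decomposition bookkeeping is correct: the element $\mu(a)=[\widetilde{a}^{\omega},h]\,(\widetilde{a}^{\omega})^{-1}$ is supported on $\{\omega_1,y\}$ with the stated states, and $\mu(a)\mu(a^{-1})=\iota_{\omega_1}\bigl([[a,h_{\omega_1}],a^{-2}]\bigr)$. The gap is exactly where you locate it, and it is genuine rather than a matter of writing out details. With only one free parameter $a$, the elements $[[a,c],a^{-2}]$ for $c=h_{\omega_1}$ need not generate $G$ even after taking products and conjugating by states $g_{\omega_1}$ of elements $g\in\St_G(\omega_1)$. In the extreme case nothing in the hypotheses prevents $h_{\omega_1}=1$ for \emph{every} $h\in\St_G(\omega_1)$ with $\sigma_h(o)\neq o$; then every one of your double commutators is trivial and the argument produces nothing. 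Your appeal to ``iterating the construction at deeper levels'' does not help: the lemma is a level-$1$ statement, and recursing down the spine does not manufacture missing elements of $G$ at level~$1$.

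The paper sidesteps this difficulty by introducing a \emph{second} free variable. Rather than commuting $\widetilde{a}^{\omega}$ with $h$, it forms
\[
[\widetilde{k}^{\omega},\,h\widetilde{g}^{\omega}h^{-1}]
\]
for independent $g,k\in G$. The conjugate $h\widetilde{g}^{\omega}h^{-1}$ has trivial state at $o$ (this is precisely what the choice of $h$ buys) and state $h_{\omega_1}gh_{\omega_1}^{-1}$ at $\omega_1$; since $\widetilde{k}^{\omega}$ has trivial state away from $\{o,\omega_1\}$, the commutator collapses to $\iota_{\omega_1}\bigl([k,\,h_{\omega_1}gh_{\omega_1}^{-1}]\bigr)$. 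Because $h_{\omega_1}\in G$ by self-similarity, letting $g$ range over $G$ makes $h_{\omega_1}gh_{\omega_1}^{-1}$ range over all of $G$ \emph{regardless of the value of $h_{\omega_1}$}, and the independent choice of $k$ then yields every simple commutator $[k,g']$. Perfectness finishes the proof in one stroke, with no closure argument required. The missing idea in your approach is exactly this two-variable trick.
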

\begin{proof}
Since $G$ satisfies property~H, we can find some $h \in G$ with $h(\omega_1) = \omega_1$ and $h(o) \neq o$.
Let $h = \sigma_h \cdot (h_x)$ be the wreath decomposition of $h$.
Consider an arbitrary $g \in G$ and its image $\widetilde{g}^{\omega}$ in $\widetilde{G}^{\omega}$.
Recall that $\widetilde{g}^{\omega} = (g_x)$,
where $g_{o} = \widetilde{g}^{L(\omega)}$, $g_{\omega_1} = g$ and $g_x = \id$ otherwise.
By conjugating $\widetilde{g}^{\omega}$ with $h$ we obtain
\begin{align*}
h \widetilde{g}^{\omega} h^{-1}
&= \sigma_h \cdot (h_x) \circ (g_x) \circ (h_x^{-1}) \cdot \sigma_h^{-1}\\
&= \sigma_h \cdot (h_{x} g_{x} h_{x}^{-1}) \cdot \sigma_h^{-1}\\
&= (h_{\sigma_h(x)} g_{\sigma_h(x)} h_{\sigma_h(x)}^{-1}).
\end{align*}
From the self-similarity of $G$ we see that
\[
h_{\sigma_h(\omega_1)} g_{\sigma_h(\omega_1)} h_{\sigma_h(\omega_1)}^{-1}
= h_{\omega_1} g_{\omega_1} h_{\omega_1}^{-1}
= h_{\omega_1} g h_{\omega_1}^{-1}
\]
is an element of $G$.
Further we have $h_{\sigma_h(x)} g_{\sigma_h(x)} h_{\sigma_h(x)}^{-1} = \id$ for $\sigma_h(x) \in X \setminus \{o,\omega_1\}$.
In particular it follows that $h_{\sigma_h(o)} g_{\sigma_h(o)} h_{\sigma_h(o)}^{-1} = \id$.
Let $k \in G$ be a further element.
Note that the commutator $[\widetilde{k}^{\omega},h \widetilde{g}^{\omega} h^{-1}]$
takes the form
\[
[\widetilde{k}^{\omega},h \widetilde{g}^{\omega} h^{-1}]
= ([k_x,(h_{\sigma_h(x)} g_{\sigma_h(x)} h_{\sigma_h(x)}^{-1})])
\]
and that $[k_x,(h_{\sigma_h(x)} g_{\sigma_h(x)} h_{\sigma_h(x)}^{-1})] = \id$ for $x \neq \omega_1$.
On the other hand we have $[k_{\omega_1},(h_{\sigma_h({\omega_1})} g_{\sigma_h({\omega_1})} h_{\sigma_h({\omega_1})}^{-1})] = [k,h_{\omega_1} g h_{\omega_1}^{-1}]$.
Thus we see that every element of the form $\iota_{\omega_1}([k,h_{\omega_1} g h_{\omega_1}^{-1}])$ lies in $\RiSt_{\Gamma_{G}^{\omega}}(\omega_1)$.
Since $G$ is perfect and $g,k \in G$ were chosen arbitrarily, it follows that $\iota_{\omega_1}(G)$ is contained in $\RiSt_{\Gamma_{G}^{\omega}}(\omega_1)$.
\end{proof}

For every non-empty subset $\Omega \subseteq \mathcal{S}$ we consider its image
\[
L(\Omega) = \Set{L(\omega)}{\omega \in \Omega} \subseteq \mathcal{S}
\]
under the shift operator.

\begin{lemma}\label{lem:structure-of-rist}
Let $G \leq \Aut(T_X)$ be a perfect, self-similar subgroup that satisfies property~H.
For every non-empty $\Omega \subseteq \mathcal{S}$ and every $x \in X$ we have $\RiSt_{\Gamma_{G}^{\Omega}}(x)
= \iota_x(\Gamma_{G}^{L(\Omega)})$.
\end{lemma}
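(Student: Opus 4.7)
The plan is to establish the two inclusions separately, with the reverse inclusion being the more conceptually transparent one.

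\emph{For the inclusion $\RiSt_{\Gamma_G^\Omega}(x) \subseteq \iota_x(\Gamma_G^{L(\Omega)})$}, I will show that every $\alpha \in \Gamma_G^\Omega$ has all its states $\alpha_y$ in $\Gamma_G^{L(\Omega)}$. The wreath composition formulas $(\alpha\beta)_y = \alpha_{\sigma_\beta(y)}\beta_y$ and $(\alpha^{-1})_y = \alpha_{\sigma_\alpha^{-1}(y)}^{-1}$ show that the set of automorphisms of $T_X$ whose states all lie in the subgroup $\Gamma_G^{L(\Omega)}$ is itself a subgroup of $\Aut(T_X)$, so it suffices to verify the containment on a generating set of $\Gamma_G^\Omega$. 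Elements of $G$ have states in $G \subseteq \Gamma_G^{L(\Omega)}$ by self-similarity, while a generator $\widetilde{g}^{\omega}$ with $\omega \in \Omega$ has states $g$, $\widetilde{g}^{L(\omega)}$, and $\id$, all of which lie in $\Gamma_G^{L(\Omega)}$. For $\alpha \in \RiSt_{\Gamma_G^\Omega}(x)$ the top permutation is trivial and every state $\alpha_y$ with $y \neq x$ vanishes, so $\alpha = \iota_x(\alpha_x)$ with $\alpha_x \in \Gamma_G^{L(\Omega)}$.

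\emph{For the inclusion $\iota_x(\Gamma_G^{L(\Omega)}) \subseteq \RiSt_{\Gamma_G^\Omega}(x)$}, I first handle the case $x = o$. For each $\omega \in \Omega$ and $g \in G$, a direct wreath calculation yields the identity $\iota_o(\widetilde{g}^{L(\omega)}) = \widetilde{g}^{\omega} \cdot \iota_{\omega_1}(g^{-1})$, since both sides have state $\widetilde{g}^{L(\omega)}$ at $o$ and the identity at every other first-level vertex. The first factor lies in $\Gamma_G^{\omega}$ by definition and the second lies in $\RiSt_{\Gamma_G^\omega}(\omega_1)$ by the previous lemma, so the product lies in $\Gamma_G^\Omega$ and manifestly rigidly stabilizes $o$. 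This yields $\iota_o(\widetilde{G}^{L(\omega)}) \subseteq \RiSt_{\Gamma_G^\Omega}(o)$. To also obtain $\iota_o(G) \subseteq \RiSt_{\Gamma_G^\Omega}(o)$, I choose $h \in G$ with $h(\omega_1) = o$ (using transitivity on $X$ from property~H); conjugation by $h$ sends $\iota_{\omega_1}(G)$ onto $\iota_o(h_{\omega_1} G h_{\omega_1}^{-1}) = \iota_o(G)$ by self-similarity, and simultaneously sends $\RiSt_{\Gamma_G^\Omega}(\omega_1)$ onto $\RiSt_{\Gamma_G^\Omega}(o)$. Since $\iota_o$ is a homomorphism and $\Gamma_G^{L(\Omega)}$ is generated by $G$ together with the groups $\widetilde{G}^{L(\omega)}$, I conclude $\iota_o(\Gamma_G^{L(\Omega)}) \subseteq \RiSt_{\Gamma_G^\Omega}(o)$. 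Finally, for an arbitrary $x \in X$ I pick $g \in G$ with $g(o) = x$; the identity $g \iota_o(\beta) g^{-1} = \iota_x(g_o \beta g_o^{-1})$ combined with $g_o \in G \subseteq \Gamma_G^{L(\Omega)}$ transfers the inclusion from $o$ to $x$.

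The main technical hurdle is keeping track of the interplay between the wreath decomposition and the defining generating sets of $\Gamma_G^\Omega$ and $\Gamma_G^{L(\Omega)}$: at every step one has to combine self-similarity of $G$, the recursive identity linking $\widetilde{g}^{\omega}$ with $\widetilde{g}^{L(\omega)}$, and transitivity from property~H in order to transport rigid stabilizers between the root level and the subtree rooted at an arbitrary first-level vertex.
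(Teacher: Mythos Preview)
Your proof is correct and follows essentially the same route as the paper's: both arguments use the identity $\widetilde{g}^{\omega}\,\iota_{\omega_1}(g)^{-1}=\iota_o(\widetilde{g}^{L(\omega)})$ together with the previous lemma to put $\iota_o(G)$ and $\iota_o(\widetilde{G}^{L(\omega)})$ inside $\RiSt_{\Gamma_G^\Omega}(o)$, then transport by conjugation in $G$, while the reverse inclusion is read off from the states of the generators. Your treatment of the reverse inclusion is more explicit than the paper's (you spell out the wreath composition formulas to check that the set of automorphisms with states in $\Gamma_G^{L(\Omega)}$ is a subgroup), but the underlying idea is identical.
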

\begin{proof}
Let $\omega = (\omega_n) \in \Omega$.
By Lemma~\ref{lem:structure-of-rist} we have
$\iota_{\omega_1}(G)\subseteq \RiSt_{\Gamma_{G}^{\omega}}(\omega_1)$.
Since $G$ is self-similar and level-transitive, this implies
\begin{equation}\label{eq:structure-of-rist-2}
\iota_{o}(G)
\subseteq \RiSt_{\Gamma_{G}^{\omega}}(o)
\subseteq \RiSt_{\Gamma_{G}^{\Omega}}(o).
\end{equation}
We observe that $\widetilde{g}^{\omega}\iota_{\omega_1}(g)^{-1} = \iota_o(\widetilde{g}^{L(\omega)})$ and hence
Lemma~\ref{lem:structure-of-rist} implies further that
\begin{equation}\label{eq:structure-of-rist-3}
\iota_o(\widetilde{G}^{L(\omega)})
\subseteq \RiSt_{\Gamma_{G}^{\Omega}}(o).
\end{equation}
As $\omega \in \Omega$ was arbitrary,~\eqref{eq:structure-of-rist-2} together with~\eqref{eq:structure-of-rist-3} show that
\[
\iota_o(\Gamma_{G}^{L(\Omega)})
\subseteq \RiSt_{\Gamma_{G}^{\Omega}}(o)
\subseteq \St_{\Gamma_{G}^{\Omega}}(o).
\]
A further application of the level-transitivity and the self-similarity of $G$ now gives us $\iota_x(\Gamma_{G}^{L(\Omega)}) \subseteq \RiSt_{\Gamma_{G}^{\Omega}}(x)$ for every $x \in X$.
On the other hand, each $\Gamma_{G}^{\omega}$ is generated by elements of the form $g = \sigma_g \cdot (g_x)$ with either $g_x \in \widetilde{G}^{L(\omega)}$ or $g_x \in G$.
From this we see that the reverse inclusion $\RiSt_{\Gamma_{G}^{\Omega}}(x) \subseteq \iota_x(\Gamma_{G}^{L(\Omega)})$ is also satisfied.
\end{proof}

\begin{corollary}\label{cor:structure-of-rist-higher-level}
Let $G \leq \Aut(T_X)$ be a perfect, self-similar subgroup that satisfies property~H.
For every non-empty $\Omega \subseteq \mathcal{S}$ and every word $v \in X^{\ast}$ of length $\ell$, the restricted stabilizer of $v$ in $\Gamma_{G}^{\Omega}$ is given by $\RiSt_{\Gamma_{G}^{\Omega}}(v) = \iota_v(\Gamma_{G}^{L^{\ell}(\Omega)})$.
Moreover, we have $\RiSt_{\Gamma_{G}^{\Omega}}(\ell) = \St_{\Gamma_{G}^{\Omega}}(\ell)$ for every $\ell \in \N_0$. In particular, $\Gamma_G^{\Omega}$ is a branch group and the action is level-transitive.
\end{corollary}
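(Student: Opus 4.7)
The plan is to prove both assertions by induction on $\ell$ and then deduce the branch property and level-transitivity as immediate consequences. For the rigid stabilizer formula $\RiSt_{\Gamma_G^\Omega}(v) = \iota_v(\Gamma_G^{L^\ell(\Omega)})$ when $|v| = \ell$, the case $\ell = 0$ is trivial and $\ell = 1$ is the preceding lemma. For the inductive step, given a word $v$ of length $\ell + 1 \geq 2$, I would write $v = xu$ with $x \in X$ and $|u| = \ell$. Any $g \in \RiSt_{\Gamma_G^\Omega}(v)$ fixes every vertex outside $(T_X)_x$ and therefore lies in $\RiSt_{\Gamma_G^\Omega}(x) = \iota_x(\Gamma_G^{L(\Omega)})$; writing $g = \iota_x(h)$, the condition that $g$ fixes $(T_X)_x \setminus (T_X)_v$ pointwise translates directly into $h \in \RiSt_{\Gamma_G^{L(\Omega)}}(u)$, which by the inductive hypothesis applied to $\Gamma_G^{L(\Omega)}$ equals $\iota_u(\Gamma_G^{L^{\ell+1}(\Omega)})$. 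Since $\iota_x \circ \iota_u = \iota_v$, the step is complete.

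For the equality $\St_{\Gamma_G^\Omega}(\ell) = \RiSt_{\Gamma_G^\Omega}(\ell)$ only the inclusion $\St \subseteq \RiSt$ requires work, and I expect the main obstacle to lie in the level-$1$ case. Set $N \defeq \RiSt_{\Gamma_G^\Omega}(1)$, which by the preceding lemma equals $\prod_{x \in X} \iota_x(\Gamma_G^{L(\Omega)})$ and is a normal subgroup of $\Gamma_G^\Omega$ contained in $\St_{\Gamma_G^\Omega}(1)$. Every defining generator $\widetilde{g}^{\omega}$ already lies in $N$, because its wreath decomposition has all states in $\Gamma_G^{L(\Omega)}$ (namely $\widetilde{g}^{L(\omega)}$, $g$, and identities); meanwhile every generator $g \in G$ decomposes by self-similarity as $g = \sigma_g \cdot \prod_{x \in X} \iota_x(g_x)$ with $g_x \in G$, so $\sigma_g = g \cdot \prod_x \iota_x(g_x^{-1})$ lies in $\Gamma_G^\Omega$ and $g \equiv \sigma_g \pmod{N}$. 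Consequently every element of $\Gamma_G^\Omega$ admits a representative modulo $N$ of the form $\sigma_h$ for some $h \in G$; if that element lies in $\St(1)$, its rooted representative has trivial level-$1$ permutation and must be the identity, whence $\St_{\Gamma_G^\Omega}(1) \subseteq N$.

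For the step $\ell \to \ell + 1$, any $g \in \St_{\Gamma_G^\Omega}(\ell + 1)$ already lies in $\St_{\Gamma_G^\Omega}(1)$, so the level-$1$ case yields $g = \prod_x \iota_x(h_x)$ with $h_x \in \Gamma_G^{L(\Omega)}$; since $g$ fixes level $\ell + 1$, each $h_x$ fixes level $\ell$, hence $h_x \in \RiSt_{\Gamma_G^{L(\Omega)}}(\ell)$ by the inductive hypothesis. By the first claim, this rigid stabilizer is generated by the subgroups $\iota_u(\Gamma_G^{L^{\ell+1}(\Omega)})$ for $u \in X^\ell$, whose images under $\iota_x$ are $\iota_{xu}(\Gamma_G^{L^{\ell+1}(\Omega)}) = \RiSt_{\Gamma_G^\Omega}(xu)$; hence $g \in \RiSt_{\Gamma_G^\Omega}(\ell + 1)$. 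Level-transitivity follows by induction, using that $G$ acts transitively on $X$ and that $\iota_x(\Gamma_G^{L(\Omega)}) \subseteq \Gamma_G^\Omega$ acts as $\Gamma_G^{L(\Omega)}$ on the subtree $(T_X)_x$. The branch property is then immediate: each $\St_{\Gamma_G^\Omega}(v)$ has finite index in $\Gamma_G^\Omega$ (the orbit of $v$ has at most $|X|^\ell$ elements), so $\St_{\Gamma_G^\Omega}(\ell) = \RiSt_{\Gamma_G^\Omega}(\ell)$ is an intersection of finitely many finite-index subgroups, and hence itself of finite index.
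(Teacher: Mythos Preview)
Your proof is correct and follows essentially the same inductive strategy as the paper. The only cosmetic differences are that you peel off the \emph{first} letter of $v$ (writing $v = xu$ and applying the inductive hypothesis to $\Gamma_G^{L(\Omega)}$) rather than the last, and your treatment of $\St = \RiSt$ spells out in more detail what the paper compresses into a single remark about the wreath decompositions of the generators.
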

\begin{proof}
The proof is by induction on the length of $v$.
If $v$ is the empty word, then there is nothing to show.
Suppose now that the corollary holds for some $\ell \in \N_0$.
Let $w \in X^{\ell+1}$ be a word of the form $w = vx$ with $v \in X^{\ell}$ and $x \in X$.
From Lemma~\ref{lem:structure-of-rist} we know that $\RiSt_{\Gamma_{G}^{L^{\ell}(\Omega)}}(x)
= \iota_x(\Gamma_{G}^{L^{\ell+1}(\Omega)})$ for every $x \in X$.
We obtain
\begin{align*}
\RiSt_{\Gamma_{G}^{\Omega}}(w)
&=&
& \RiSt_{\RiSt_{\Gamma_{G}^{\Omega}}(v)}(vx) &
&=&
&\RiSt_{\iota_v(\Gamma_{G}^{L^{\ell}(\Omega)})}(vx)&\\
&=&
&\iota_v(\RiSt_{\Gamma_{G}^{L^{\ell}(\Omega)}}(x))&
&=&
&\iota_v(\iota_x(\Gamma_{G}^{L^{\ell+1}(\Omega)}))&
&=&
&\iota_{w}(\Gamma_{G}^{L^{\ell+1}(\Omega)}).&
\end{align*}
Since $\Gamma_{G}^{\Omega}$ is generated by elements of the form $g = \sigma_g \cdot (g_x)$ with either $g_x \in G$ or $g_x \in \widetilde{G}^{L(\omega)}$ for some $\omega \in \Omega$, it follows that $\St_{\Gamma_{G}^{\omega
}}(\ell)$ is contained in the group generated by all subgroups of the form $\iota_{v}(\Gamma_{G}^{L^{\ell}(\Omega)})$ with $v$ of level $\ell$.
Together with the first part this implies $\St_{\Gamma_{G}^{\Omega}}(\ell)
= \RiSt_{\Gamma_{G}^{\Omega}}(\ell)$ and since $\St_{\Gamma_{G}^{\Omega}}(\ell)$ is of finite index, we conclude that $\Gamma_{G}^{\Omega}$ is a branch group.
By property~H the group $G$ acts transitively on the first level. Since $\RiSt_{\Gamma_G^{\Omega}}(v)$ contains $\iota_v(G)$, it follows by induction that $\Gamma_G^{\Omega}$ acts transitively on every level.
\end{proof}

To finish this section we show that the groups $\Gamma_{G}^{\Omega}$ act like iterated wreath products on each level. Recall that for group
$G,H$ with actions on sets $X$ and $Y$,
the \emph{permutational wreath product} $G \wr_X H$ is defined as the semidirect product $G \ltimes H^X$ where $G$ acts on $H^X$ by permuting the coordinates.
We define the \emph{natural action} of $G \wr_X H$ on the product set $X \times Y$ by $(g \cdot (h_x),(x,y)) \mapsto (g(x),h_x(y))$.

Given a finite set $X$ and a subgroup $Q \leq \Sym(X)$, we consider the iterated permutational wreath product of $Q$ given by
\[
\wr_X^{n} Q
= Q \wr_X (Q \wr_X (\cdots (Q \wr_X Q))\cdots)
\]
Note that the natural action of an element $\alpha \in \wr_X^{n} Q$ on $X^n$ extends to a tree automorphism on $T_X$ by setting $\alpha(vw)=\alpha(v)w$ for all $v \in X^n$ and $w \in X^{\ast}$.
In the following, we will identify $\wr_X^{n} Q$ with its image in $\Aut(T_X)$ under this action.

\begin{proposition}\label{prop:equal-image}
Let $G \leq \Aut(T_X)$ be a perfect, self-similar subgroup that satisfies property~H.
Let $Q \leq \Sym(X)$ denote the image of $G$ under the canonical action on $X$.
Then for every non-empty subset $\Omega \subseteq \mathcal{S}$ and every $\ell \in \N$, the image of $\Gamma_{G}^{\Omega}$ in $\Aut(T_{X}) / \St_{\Aut(T_{X})}(\ell)$ is given by the permutational wreath product $\wr_{X}^{\ell} Q$.
\end{proposition}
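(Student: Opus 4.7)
My plan is to induct on $\ell$, with the inductive hypothesis formulated uniformly over all non-empty subsets $\Omega \subseteq \mathcal{S}$ so that it can be applied to both $\Omega$ and to its shift $L(\Omega)$. The base case $\ell = 1$ is immediate: we identify $\Aut(T_X)/\St_{\Aut(T_X)}(1)$ with $\Sym(X)$; the generators of $\Gamma_G^\Omega$ of the form $\widetilde{g}^\omega$ lie in $\St_{\Aut(T_X)}(1)$ by construction, while the elements of $G$ project onto $Q$ by the very definition of $Q$. Hence the image of $\Gamma_G^\Omega$ on level~$1$ equals $Q = \wr_X^1 Q$.

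For the inductive step, assume the claim at level $\ell$ for every non-empty subset of $\mathcal{S}$. For the upper bound, I would observe that every generator $g$ of $\Gamma_G^\Omega$ admits a wreath decomposition $g = \sigma_g \cdot (g_x)$ with $\sigma_g \in Q$ (trivial if $g$ has the form $\widetilde{h}^\omega$) and with each state $g_x$ lying in $\Gamma_G^{L(\Omega)}$. For $g \in G$ this is a direct consequence of self-similarity; for $g = \widetilde{h}^\omega$ the states are either $\id$, or $h \in G \subseteq \Gamma_G^{L(\Omega)}$, or $\widetilde{h}^{L(\omega)} \in \widetilde{G}^{L(\omega)} \subseteq \Gamma_G^{L(\Omega)}$. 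The inductive hypothesis applied to $L(\Omega)$ then implies that each $g_x$ has image in $\wr_X^\ell Q$ modulo $\St_{\Aut(T_X)}(\ell)$, and therefore $g$ has image in $Q \wr_X \wr_X^\ell Q = \wr_X^{\ell+1} Q$ modulo $\St_{\Aut(T_X)}(\ell+1)$.

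For the lower bound I would invoke Corollary~\ref{cor:structure-of-rist-higher-level}, which identifies $\RiSt_{\Gamma_G^\Omega}(x) = \iota_x(\Gamma_G^{L(\Omega)})$ for every $x \in X$. Under the canonical isomorphism
\[
\St_{\Aut(T_X)}(1)/\St_{\Aut(T_X)}(\ell+1) \cong \bigl(\Aut(T_X)/\St_{\Aut(T_X)}(\ell)\bigr)^X,
\]
each $\iota_x(\Gamma_G^{L(\Omega)})$ sits in the $x$-th factor and projects onto $\wr_X^\ell Q$ there by the inductive hypothesis applied to $L(\Omega)$. Taking the product over $x \in X$ therefore realizes the entire base $(\wr_X^\ell Q)^X$ inside the image, and combining with the top $Q$ (already realized by $G$) gives $\wr_X^{\ell+1} Q \subseteq \mathrm{image}$, closing the induction. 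The only point to track carefully is that the inductive hypothesis be stated uniformly in $\Omega$ so that it can be invoked at both $\Omega$ and $L(\Omega)$; beyond this the argument is essentially bookkeeping built on top of Corollary~\ref{cor:structure-of-rist-higher-level}.
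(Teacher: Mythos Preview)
Your argument is correct and uses the same two ingredients as the paper: the wreath decomposition of generators (with states landing in $\Gamma_G^{L(\Omega)}$) for the upper bound, and the identification $\RiSt_{\Gamma_G^\Omega}(x)=\iota_x(\Gamma_G^{L(\Omega)})$ for the lower bound. The only organizational difference is that you run an explicit induction on $\ell$ (uniform in $\Omega$), whereas the paper argues directly: it first observes that every rooted automorphism $\sigma\in Q$ already lies in $\Gamma_G^{\Omega'}$ for \emph{every} non-empty $\Omega'$, and then invokes Corollary~\ref{cor:structure-of-rist-higher-level} at all levels at once to get $\iota_v(\sigma)\in\Gamma_G^\Omega$ for every $v\in X^*$, which generates $\wr_X^\ell Q$ on level $\ell$. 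Your inductive formulation and the paper's ``all levels at once'' formulation are equivalent repackagings of the same content.
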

\begin{proof}
By construction, every $g \in \Gamma_{G}^{\Omega}$ has a wreath decomposition
\begin{equation}\label{eq:equal-image-1}
g = \sigma_g \circ (g_x),
\end{equation}
where $\sigma_g$ is a rooted automorphism that corresponds to an element in $Q$ and $g_x \in \Gamma_{G}^{L(\Omega)}$ for every $x \in X$.
Note that this implies that for every $g \in \Gamma_{X}^{\Omega}$ and every word $v = x_1 \dots x_{\ell}$ over $X$, its image under $g$ is given by
\begin{equation}\label{eq:equal-image-2}
g(v) = \sigma_1(x_1) \ldots \sigma_{\ell}(x_{\ell})
\end{equation}
for some appropriate permutations $\sigma_i \in Q$, i.e., the image of $\Gamma_{G}^{\Omega}$ lies in $\wr_{X}^{\ell} Q$.

On the other hand, Lemma~\ref{lem:structure-of-rist} tells us that $\RiSt_{\Gamma_{G}^{\Omega}}(x)
= \iota_x(\Gamma_{G}^{L(\Omega)})$ for every $x \in X$.
Together with~\eqref{eq:equal-image-1}, this shows that for every $\sigma \in Q$ its corresponding rooted automorphism, also denoted by $\sigma$, lies in $\Gamma_{G}^{L(\Omega)}$.
From Corollary~\ref{cor:structure-of-rist-higher-level} it therefore follows that $\iota_v(\sigma) \in \RiSt_{\Gamma_{G}^{\Omega}}(v)$ for every $v \in X^{\ast}$ and every $\sigma \in Q$.
In view of~\eqref{eq:equal-image-2}, this implies that the image of $\Gamma_{G}^{\Omega}$ in $\Aut(T_{X}) / \St_{\Aut(T_{X})}(\ell)$
is given by $\wr_{X}^{\ell} Q$.
\end{proof}
\section{The congruence subgroup property}\label{sec:csp}

Let $T$ be a rooted tree. We make $\Aut(T)$ into a topological group by declaring the subgroups $\St_{\Aut(T)}(n)$ to be a base of open neighbourhoods of the identity. Equipped with this topology the automorphism group $\Aut(T)$ is a compact, totally disconnected Hausdorff topological group, i.e., a profinite group. 

Recall that the \emph{profinite completion} of a residually finite group $G$ is defined as the inverse limit $\widehat{G} \defeq \varprojlim \limits_{N \unlhd_f G} G / N$ of the system of all normal subgroups of finite index in $G$.
If $G$ is a subgroup of $\Aut(T)$, we can further consider its \emph{tree completion} $\overline{G}$:  the closure of $G$ in $\Aut(T)$ with respect to the profinite topology. In particular $\overline{G}$ is a profinite group and $\overline{G} \cong \varprojlim \limits_{n} G / \St_G(n)$.
In this case the universal property of the profinite completion gives rise to a canonical homomorphism
 \[
\res^G_T \colon \widehat{G} \rightarrow \overline{G}.
\]
The homomorphism $\res^G_T$ allows us to extend the action of $G$ on $T$ to an action of $\widehat{G}$ on $T$.
Since $G$ is dense in both $\widehat{G}$ and $\overline{G}$, the map $\res_T$ is always surjective.
The goal of this section is to formulate sufficient conditions under which $\res_T$ is injective.

\begin{definition}\label{def:CSP}
Let $T$ be a rooted tree.
A subgroup $G \leq \Aut(T)$ satisfies the \emph{congruence subgroup property} $(\CSP)$ if $\res^G_T \colon \widehat{G} \rightarrow \overline{G}$ is an isomorphism.
\end{definition}

\begin{remark}\label{rem:rewriting-CSP}
From the definitions it directly follows that a subgroup $G \leq \Aut(T)$ satisfies the congruence subgroup property if and only if for every normal subgroup $N \unlhd G$ of finite index there is a number $n \in \N$ such that $\St_G(n)$ is contained in $N$.
\end{remark}

The following very useful observation was extracted by Segal~\cite[Lemma $4$]{Segal01} from the proof of~\cite[Theorem $4$]{Grigorchuk00}.

\begin{lemma}\label{lem:commutator-rist}
Let $T$ be a rooted tree and let $G \leq \Aut(T)$ be a subgroup that acts level transitively on $T$.
Then for every non-trivial normal subgroup $N \unlhd G$ there is some $n \in \N$ with $\RiSt_{G}(n)' \leq N$, where $\RiSt_{G}(n)'$ denotes the commutator subgroup of $\RiSt_{G}(n)$.
\end{lemma}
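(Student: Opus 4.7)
The plan is to work from a single non-trivial element of $N$ down to the full commutator of a rigid level stabilizer, in three stages: locate a suitable vertex $v$ on some level $n$, prove $\RiSt_G(v)' \subseteq N$ by a short commutator manipulation, then use level transitivity of $G$ together with normality of $N$ to upgrade this to $\RiSt_G(n)' \subseteq N$.

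First I would pick a non-trivial $h \in N$. Since $h \neq \id$, it moves some vertex; choose one and call its level $n$, so that there is a vertex $v$ at level $n$ with $w \defeq h(v) \neq v$. The geometric input I exploit is that distinct vertices on the same level of a rooted tree have disjoint descendant subtrees, so any automorphism supported on the subtree below $v$ commutes with any automorphism supported on the subtree below $w$.

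Next, for arbitrary $a,b \in \RiSt_G(v)$, I would show $[a^{-1},b] \in N$. Set $c \defeq [h,a] = hah^{-1}a^{-1}$; since $h \in N$ and $N$ is normal, $c \in N$. The factor $hah^{-1}$ is supported on the subtree below $w = h(v)$, hence commutes with every element supported below $v$, including $b$ and the conjugate $a^{-1}ba \in \RiSt_G(v)$. Expanding yields
\[
cbc^{-1} = (hah^{-1}) \cdot a^{-1}ba \cdot (ha^{-1}h^{-1}) = a^{-1}ba \cdot (hah^{-1})(ha^{-1}h^{-1}) = a^{-1}ba,
\]
so $[c,b] = a^{-1}bab^{-1} = [a^{-1},b]$. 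Normality of $N$ now gives $[a^{-1},b] \in N$, and since $a,b$ were arbitrary in $\RiSt_G(v)$, we conclude $\RiSt_G(v)' \subseteq N$.

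Finally, level transitivity of $G$ yields, for every vertex $u$ at level $n$, an element $g \in G$ with $gv = u$; then $g\,\RiSt_G(v)\,g^{-1} = \RiSt_G(u)$, and normality of $N$ propagates $\RiSt_G(v)' \subseteq N$ to $\RiSt_G(u)' \subseteq N$. Since rigid vertex stabilizers at distinct vertices of level $n$ pairwise commute, $\RiSt_G(n)$ is the internal product of the $\RiSt_G(u)$ over all level-$n$ vertices $u$, and its commutator subgroup is therefore the product of the individual $\RiSt_G(u)'$. Hence $\RiSt_G(n)' \subseteq N$. The technically delicate step is the commutator identity $[c,b] = [a^{-1},b]$, which applies the disjoint-support principle twice; the surrounding arguments are standard bookkeeping about actions on rooted trees.
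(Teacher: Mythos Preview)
Your argument is correct and is essentially the standard proof of this well-known lemma in branch group theory. Note that the paper does not give its own proof of this statement: it simply records the lemma and attributes it to Segal, who extracted it from a proof of Grigorchuk. Your write-up reproduces that classical argument---pick $h\in N$ moving some level-$n$ vertex $v$, exploit disjointness of the subtrees below $v$ and $h(v)$ to obtain the commutator identity $[c,b]=[a^{-1},b]$, and then propagate $\RiSt_G(v)'\subseteq N$ to all level-$n$ vertices by level transitivity and normality. The only points worth stating slightly more explicitly are that $[c,b]\in N$ because $c\in N$ and $N$ is normal (so $bc^{-1}b^{-1}\in N$), and that $\RiSt_G(n)$ is the \emph{internal direct product} of the $\RiSt_G(u)$ (pairwise commuting with trivial pairwise intersections), which is why its commutator subgroup decomposes as the product of the $\RiSt_G(u)'$; but you already indicate both facts.
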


Recall that an infinite group $G$ is called \emph{just infinite} if every proper quotient of $G$ is finite.

\begin{corollary}\label{cor:criterion-CSP}
Let $T$ be a rooted tree and let $G \leq \Aut(T)$ be a subgroup that acts level transitively on $T$.
Suppose that every rigid stabilizer $\RiSt_G(v)$ is perfect and that the groups $\St_G(n)$ and $\RiSt_G(n)$ coincide for every $n \in \N$.
Then $G$ is just infinite and satisfies the $\CSP$.
\end{corollary}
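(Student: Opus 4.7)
The plan is to unify the proofs of both conclusions by establishing the following stronger statement: for every nontrivial normal subgroup $N \unlhd G$ there exists $n \in \N$ with $\St_G(n) \subseteq N$. By Remark \ref{rem:rewriting-CSP} this immediately gives the CSP (applied to finite-index $N$, which are nontrivial because $G$ acts level-transitively on the infinite tree $T$ and is therefore infinite). For just infiniteness, the same statement shows that any proper quotient $G/N$ is a quotient of the finite group $G/\St_G(n)$ and hence finite.

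The input I would use is Lemma~\ref{lem:commutator-rist}, which already provides, for any nontrivial $N \unlhd G$, some $n \in \N$ with $\RiSt_G(n)' \leq N$. By the second hypothesis $\RiSt_G(n) = \St_G(n)$, so to conclude it would suffice to prove that $\RiSt_G(n)$ is perfect, i.e.\ that $\RiSt_G(n)' = \RiSt_G(n)$. This is the main (and essentially only) step requiring argument.

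To see perfectness of $\RiSt_G(n)$, I would use the observation that for any two distinct vertices $v, w$ at the same level $n$, the subtrees rooted at $v$ and at $w$ are disjoint; consequently elements of $\RiSt_G(v)$ and $\RiSt_G(w)$ have disjoint supports and commute. Therefore the subgroup $\RiSt_G(n) = \langle \bigcup_{v \in X^n} \RiSt_G(v)\rangle$ is the (internal) commuting product of the $\RiSt_G(v)$, and commutators between elements of distinct factors vanish. This gives
\[
\RiSt_G(n)' = \prod_{v \in X^n} \RiSt_G(v)',
\]
and the first hypothesis (each $\RiSt_G(v)$ is perfect) then yields $\RiSt_G(n)' = \prod_v \RiSt_G(v) = \RiSt_G(n)$.

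Combining these three ingredients in the order sketched above delivers both the CSP and the just infinite property. The only non-routine point is the observation about commuting rigid stabilizers at the same level, so I do not anticipate a significant technical obstacle; the proof is essentially a clean combination of Lemma~\ref{lem:commutator-rist}, Remark~\ref{rem:rewriting-CSP}, and the two hypotheses.
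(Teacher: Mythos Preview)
Your proposal is correct and follows essentially the same route as the paper: apply Lemma~\ref{lem:commutator-rist}, then use perfectness of the $\RiSt_G(v)$ together with $\St_G(n)=\RiSt_G(n)$ to upgrade $\RiSt_G(n)'\le N$ to $\St_G(n)\le N$. The only cosmetic difference is that the paper avoids your commuting-factors computation by noting directly that $\RiSt_G(v)=\RiSt_G(v)'\le\RiSt_G(n)'$ for each level-$n$ vertex $v$, and since these subgroups generate $\RiSt_G(n)$ one obtains $\RiSt_G(n)\le\RiSt_G(n)'$ without ever invoking that distinct rigid vertex stabilizers commute.
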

\begin{proof}
Let $N$ be a non-trivial normal subgroup of $G$.
From Lemma~\ref{lem:commutator-rist} we know that there is some $n$ with $\RiSt_{G}(n)' \leq N$.
Since the rigid stabilizers are perfect, it follows that
\[
\RiSt_{G}(v) = \RiSt_{G}(v)' \leq \RiSt_{G}(n)'
\]
for every vertex $v$ of level $n$ in $T$.
On the other hand $\St_{G}(n) = \RiSt_{G}(n)$ is generated by the level $n$ rigid vertex stabilizers $\RiSt_{G}(v)$.
Thus we obtain $\St_{G}(n) = \RiSt_{G}(n)' \leq N$, which proves the claim.
\end{proof}

This result can be applied to the groups $\Gamma_{G}^{\Omega}$ defined in the previous section.

\begin{theorem}\label{thm:CSP-for-Omega}
Let $G \leq \Aut(T_X)$ be a perfect, self-similar subgroup that satisfies property~H.
Then for every non-empty subset $\Omega \subseteq \mathcal{S}$ the group $\Gamma_{G}^{\Omega}$ is just infinite and satisfies the congruence subgroup property.
\end{theorem}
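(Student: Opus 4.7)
The plan is to verify the three hypotheses of Corollary \ref{cor:criterion-CSP} for the group $\Gamma \defeq \Gamma_G^{\Omega}$: that $\Gamma$ acts level-transitively on $T_X$, that $\St_\Gamma(n) = \RiSt_\Gamma(n)$ for every $n \in \N$, and that each rigid vertex stabilizer $\RiSt_\Gamma(v)$ is perfect. Once these are established, the conclusion — just infiniteness and the congruence subgroup property — is immediate.

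The first two conditions are already on the shelf: Corollary \ref{cor:structure-of-rist-higher-level} asserts both level-transitivity of $\Gamma_G^\Omega$ and the equality $\St_{\Gamma_G^\Omega}(\ell) = \RiSt_{\Gamma_G^\Omega}(\ell)$. So the only real work lies in proving that the rigid vertex stabilizers are perfect.

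For the third condition, let $v \in X^*$ be a vertex of level $\ell$. By Corollary \ref{cor:structure-of-rist-higher-level}, $\RiSt_\Gamma(v) = \iota_v(\Gamma_G^{L^\ell(\Omega)})$, and since $\iota_v$ is an injective homomorphism it suffices to show that $\Gamma_G^{\Omega'}$ is perfect for every non-empty $\Omega' \subseteq \mathcal{S}$. By construction, $\Gamma_G^{\Omega'}$ is generated by $G$ together with the subgroups $\widetilde{G}^\omega$ for $\omega \in \Omega'$. The group $G$ is perfect by hypothesis, and each $\widetilde{G}^\omega$ is the image of $G$ under the homomorphism $\widetilde{\cdot}^\omega$ from Definition \ref{def:omega-elements}, hence also perfect. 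A standard observation closes the loop: if a group is generated by a family of perfect subgroups, it is itself perfect, because every generator lies in some member of the family and is therefore a product of commutators in the ambient group. Applied here, this yields perfectness of $\Gamma_G^{\Omega'}$, and consequently of every $\RiSt_\Gamma(v)$.

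The main (minor) obstacle is really just the bookkeeping of keeping track of which of $G$'s three properties — perfectness, self-similarity, property H — is used where, and ensuring that the reduction $v \leadsto L^\ell(\Omega)$ stays within the scope of these hypotheses; but since perfectness, self-similarity and property H are features of $G$ alone and survive passage to $L^\ell(\Omega)$ for any $\ell$, this goes through without incident. Once perfectness of the rigid stabilizers is in hand, Corollary \ref{cor:criterion-CSP} delivers both conclusions simultaneously.
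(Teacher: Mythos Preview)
Your proof is correct and follows essentially the same route as the paper: invoke Corollary~\ref{cor:structure-of-rist-higher-level} for level-transitivity and the equality $\St_{\Gamma_G^\Omega}(\ell)=\RiSt_{\Gamma_G^\Omega}(\ell)$, observe that each rigid vertex stabilizer is (an isomorphic copy of) a group generated by perfect copies of $G$ and hence perfect, and then apply Corollary~\ref{cor:criterion-CSP}. Your write-up is slightly more explicit about the perfectness step, but the argument is the same.
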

\begin{proof}
From Corollary~\ref{cor:structure-of-rist-higher-level} we know that the groups $\RiSt_{\Gamma_{G}^{\Omega}}(\ell)$ and $\St_{\Gamma_{G}^{\Omega}}(\ell)$ coincide for every $\ell \in \N$.
As each rigid stabilizer $\RiSt_{\Gamma_{G}^{\Omega}}(v)$ is generated by isomorphic copies of the perfect group $G$, it follows that $\RiSt_{\Gamma_{G}^{\Omega}}(v)$ is perfect itself.
Now the claim follows from Corollary~\ref{cor:criterion-CSP}.
\end{proof}

As a consequence of Theorem~\ref{thm:CSP-for-Omega}, we see that the action of $\widehat{\Gamma_{G}^{\Omega}}$ on $T_{X}$ is a faithful extension of the action of $\Gamma_{G}^{\Omega}$ on $T_{X}$ and that $\widehat{\Gamma_{G}^{\Omega}}$ is isomorphic to $\overline{\Gamma_{G}^{\Omega}} \leq \Aut(T_X)$.
In the following, it will be important for us to observe that under the assumptions of Theorem \ref{thm:CSP-for-Omega} the tree completion $\overline{\Gamma_{G}^{\Omega}}$ does not depend on $\Omega$.

In fact, the tree completion is always an \emph{iterated wreath product}. Let $X$ be a finite set and let $Q \leq \Sym(X)$.
Consider the inverse limit $\wr_{X}^{\infty} Q \defeq \varprojlim \limits_n \wr_X^{n} Q$ of the iterated wreath products, where the projection $\wr_X^{n+1} Q \rightarrow \wr_X^{n} Q$ is given by restricting the natural action of $\wr_X^{n} Q$ on $X^{n+1}$ to the first $n$ coordinates.
Then the iterated wreath product $\wr_{X}^{\infty} Q$ acts on $T_X$ and we identify $\wr_{X}^{\infty} Q$ with its image in $\Aut(T_X)$ under this action.
We note that this is a closed subgroup of $\Aut(T_X)$.

Since a closed subgroup of $\Aut(T_X)$ is uniquely determined by its actions on all finite levels of the tree, the following result is a direct consequence of 
Theorem \ref{thm:CSP-for-Omega} and Proposition~\ref{prop:equal-image}.

\begin{corollary}\label{cor:profinite-completion-is-iterated-wreath}
Let $G \leq \Aut(T_X)$ be a perfect, self-similar subgroup that satisfies~H.
Let $Q \leq \Sym(X)$ denote the image of $G$ under the canonical action on $X$.
For every non-empty subset $\Omega \subseteq \mathcal{S}$, the canonical map $\res^{\Gamma_{G}^{\Omega}}_{T_{X}}$ defines an isomorphism from $\widehat{\Gamma_{G}^{\Omega}}$ onto $\wr_X^{\infty} Q \leq \Aut(T_X)$.
\end{corollary}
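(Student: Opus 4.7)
The strategy is to factor the desired isomorphism through the tree completion $\overline{\Gamma_G^\Omega} \leq \Aut(T_X)$ and then identify that tree completion with $\wr_X^\infty Q$. First, Theorem~\ref{thm:CSP-for-Omega} asserts that $\Gamma_G^\Omega$ satisfies the congruence subgroup property, so by definition $\res^{\Gamma_G^\Omega}_{T_X}\colon \widehat{\Gamma_G^\Omega} \to \overline{\Gamma_G^\Omega}$ is already an isomorphism of profinite groups. The entire task therefore reduces to verifying the equality $\overline{\Gamma_G^\Omega} = \wr_X^\infty Q$ inside $\Aut(T_X)$.

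To establish this equality, I would use the fact that a closed subgroup $H \leq \Aut(T_X)$ is uniquely determined by the sequence of its images in the finite quotients $\Aut(T_X)/\St_{\Aut(T_X)}(\ell)$: since the level stabilizers form a neighbourhood base of the identity, such an $H$ coincides with the inverse limit of its level-$\ell$ images. Both $\overline{\Gamma_G^\Omega}$ and $\wr_X^\infty Q$ are closed in $\Aut(T_X)$ (the latter by construction as an inverse limit), so it suffices to check that their images agree on every finite level.

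For this, Proposition~\ref{prop:equal-image} gives that the image of $\Gamma_G^\Omega$ in $\Aut(T_X)/\St_{\Aut(T_X)}(\ell)$ is precisely $\wr_X^\ell Q$; since this image is already finite, taking the closure does not change anything, hence the image of $\overline{\Gamma_G^\Omega}$ on level $\ell$ is also $\wr_X^\ell Q$. By the definition of $\wr_X^\infty Q$ as $\varprojlim_\ell \wr_X^\ell Q$, its image on level $\ell$ agrees with $\wr_X^\ell Q$ as well. Combining these identifications yields $\overline{\Gamma_G^\Omega} = \wr_X^\infty Q$, and composition with the CSP isomorphism from the first step completes the proof. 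There is no real obstacle: the statement is essentially a bookkeeping synthesis of Theorem~\ref{thm:CSP-for-Omega} and Proposition~\ref{prop:equal-image}, together with the elementary fact that closed subgroups of $\Aut(T_X)$ are determined by their finite-level images.
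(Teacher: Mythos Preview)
Your proposal is correct and follows essentially the same approach as the paper: the paper states explicitly that the corollary is a direct consequence of Theorem~\ref{thm:CSP-for-Omega} and Proposition~\ref{prop:equal-image}, invoking the fact that a closed subgroup of $\Aut(T_X)$ is determined by its actions on all finite levels. Your write-up simply unpacks this one-sentence argument in more detail.
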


\begin{corollary}\label{cor:induced-isomorphism-general}
Let $G,H \leq \Aut(T_X)$ be perfect, self-similar subgroups that satisfy~H.
If the images of $G$ and $H$ in $\Sym(X)$ coincide, then the profinite completions 
$\widehat{\Gamma_{G}^{\Omega}}$ and $\widehat{\Gamma_{H}^{\Omega'}}$ are isomorphic for all non-empty subsets $\Omega,\Omega' \subseteq \mathcal{S}$.
If moreover $G$ is a subgroup of $H$ and $\Omega \subseteq \Omega'$, then $\Gamma_{G}^{\Omega}$ is a subgroup of $\Gamma_{H}^{\Omega'}$ and the inclusion map $j$ induces an isomorphism $\widehat{j} \colon \widehat{\Gamma_{G}^{\Omega}} \rightarrow \widehat{\Gamma_{H}^{\Omega'}}$.
\end{corollary}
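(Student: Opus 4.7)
The plan is to reduce everything to the previously established Corollary \ref{cor:profinite-completion-is-iterated-wreath}, which identifies the profinite completion of any $\Gamma_{G}^{\Omega}$ (for $G$ perfect, self-similar, satisfying property~H) with the iterated wreath product $\wr_X^\infty Q$, where $Q \leq \Sym(X)$ is the image of $G$ in $\Sym(X)$. Since by hypothesis $G$ and $H$ induce the \emph{same} subgroup $Q \leq \Sym(X)$, applying this corollary twice yields canonical isomorphisms
\[
\widehat{\Gamma_{G}^{\Omega}} \;\xrightarrow{\ \res^{\Gamma_G^\Omega}_{T_X}\ }\; \wr_X^{\infty} Q \;\xleftarrow{\ \res^{\Gamma_H^{\Omega'}}_{T_X}\ }\; \widehat{\Gamma_{H}^{\Omega'}},
\]
whose composition proves the first assertion.

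For the second assertion I would first verify the set-theoretic inclusion $\Gamma_{G}^{\Omega} \subseteq \Gamma_{H}^{\Omega'}$. Unwinding Definition \ref{def:omega-elements}, the group $\Gamma_{G}^{\Omega}$ is generated by $G$ together with the elements $\widetilde{g}^{\omega}$ for $g \in G$ and $\omega \in \Omega$. Since $\widetilde{\,\cdot\,}^{\omega}\colon \Aut(T_X) \to \Aut(T_X)$ is a homomorphism, we have $\widetilde{G}^{\omega} \subseteq \widetilde{H}^{\omega}$ for every $\omega$, and together with $G \subseteq H$ and $\Omega \subseteq \Omega'$ this shows that all generators of $\Gamma_{G}^{\Omega}$ already lie in $\Gamma_{H}^{\Omega'}$.

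Next I would use naturality. Both $\Gamma_{G}^{\Omega}$ and $\Gamma_{H}^{\Omega'}$ sit as subgroups of $\Aut(T_X)$, so the inclusion $j\colon \Gamma_{G}^{\Omega} \hookrightarrow \Gamma_{H}^{\Omega'}$ extends continuously to an inclusion of their closures in $\Aut(T_X)$, i.e. of their tree completions $\overline{\Gamma_{G}^{\Omega}}$ and $\overline{\Gamma_{H}^{\Omega'}}$. By Corollary \ref{cor:profinite-completion-is-iterated-wreath} \emph{both} closures coincide, as closed subgroups of $\Aut(T_X)$, with the single subgroup $\wr_X^\infty Q$, so the extension of $j$ to tree completions is simply the identity of $\wr_X^\infty Q$. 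Combining this with the commutative square
\[
\begin{tikzcd}
\widehat{\Gamma_{G}^{\Omega}} \arrow[r,"\widehat{j}"] \arrow[d,"\res^{\Gamma_{G}^{\Omega}}_{T_X}"'] & \widehat{\Gamma_{H}^{\Omega'}} \arrow[d,"\res^{\Gamma_{H}^{\Omega'}}_{T_X}"] \\
\overline{\Gamma_{G}^{\Omega}} \arrow[r,hook] & \overline{\Gamma_{H}^{\Omega'}}
\end{tikzcd}
\]
coming from the universal property of profinite completion, and using that both vertical maps are isomorphisms by Theorem \ref{thm:CSP-for-Omega}, one concludes that $\widehat{j}$ is an isomorphism.

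The only real thing to verify carefully is that the bottom horizontal arrow of the square is actually the identity map of $\wr_X^\infty Q$; this is where the hypothesis that $G$ and $H$ induce the same $Q$ enters crucially, and it is also where one uses Proposition \ref{prop:equal-image} (through Corollary \ref{cor:profinite-completion-is-iterated-wreath}) to identify each $\overline{\Gamma_{\bullet}^{\bullet}}$ with the \emph{same} closed subgroup $\wr_X^\infty Q \leq \Aut(T_X)$, rather than merely with abstractly isomorphic copies. No further obstacle is expected, since all nontrivial input—level-transitivity, the identification $\RiSt = \St$, CSP, and the iterated-wreath image computation—has already been proved.
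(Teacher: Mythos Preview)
Your proposal is correct and follows essentially the same approach as the paper: both assertions are reduced to Corollary~\ref{cor:profinite-completion-is-iterated-wreath}, and for the second one a commutative diagram comparing $\widehat{j}$ with the (coinciding) tree-completion maps into $\wr_X^\infty Q \leq \Aut(T_X)$ shows that $\widehat{j}$ is an isomorphism. You give a bit more detail than the paper (explicitly checking the inclusion $\Gamma_G^\Omega \subseteq \Gamma_H^{\Omega'}$ from the generators, and spelling out that the bottom arrow is the identity of $\wr_X^\infty Q$), but the argument is the same.
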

\begin{proof}
The first assertion follows immediately from Corollary \ref{cor:profinite-completion-is-iterated-wreath}. Assume that $G \leq H$ and $\Omega \subseteq \Omega'$. By definition $\Gamma_{G}^{\Omega} \subseteq \Gamma_{H}^{\Omega'}$.
We observe that the following diagramm commutes
\[
\begin{tikzcd}
 \Gamma_{G}^{\Omega}\arrow[d]\arrow[r, "i"] & \Gamma_{H}^{\Omega'} \arrow[r]\arrow[d] & \Aut(T_X) \arrow[d,equal]\\
 \widehat{\Gamma_{G}^{\Omega}}\arrow[r, "\widehat{i}"]& \widehat{\Gamma_{H} ^{\Omega'}}\arrow[r,"\res^{\Gamma_{H} ^{\Omega'}}_{T_X}"] & \Aut(T_X)
 \end{tikzcd}
\]
and we deduce that $\res^{\Gamma_{G}^{\Omega}}_{T_X} = \res^{\Gamma_{H} ^{\Omega'}}_{T_X} \circ \widehat{i}$.
Now it follows from Corollary~\ref{cor:profinite-completion-is-iterated-wreath} that $\widehat{i}$ is an isomorphism.
\end{proof}
\section{Uncountably many groups up to isomorphism}\label{sec:uncountable}
The aim of this section is to prove that  -- under mild assumptions on $G$ -- the family of groups $\Gamma^{\Omega}_G$ where $\Omega$ runs through the non empty subsets $\Omega \subseteq \mathcal{S}$ contains uncountably many isomorphism types of groups. 

Let $G$ be a group that acts via two homomorphisms $\varphi_1,\varphi_2 \colon G \rightarrow \Aut(T)$ on a rooted tree $T$.
We say that the actions are \emph{conjugated}, if there is an automorphism $\gamma \in \Aut(T)$ such that $\varphi_2(g) = \gamma \varphi_1(g) \gamma^{-1}$ for every $g \in G$.

\begin{definition}\label{def:rigid-presentation-alternative}
Let $T$ be a tree.
We say that a subgroup $G \leq \Aut(T)$ is \emph{rigid}, if every automorphism $\alpha$ of $G$ is induced by a conjugation of $T$.
More precisely, this means that there is some $\gamma \in \Aut(T)$ with $\alpha(g) = \gamma g \gamma^{-1}$ for every $g \in G$.
\end{definition}

The following result is a special case of~\cite[Proposition 8.1]{LavrenyukNekrashevych02}.

\begin{proposition}\label{prop:rigidity-criterion}
Let $T$ be a rooted tree and let $G \leq \Aut(T)$ be a branch group.
Suppose that for every vertex $v$ the rigid stabilizer $\RiSt_G(v)$ acts level-transitively on the subtree $T_v$.
Then $G$ is rigid in $\Aut(T)$.
\end{proposition}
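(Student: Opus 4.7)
The strategy is to reconstruct the tree $T$ from the abstract group structure of $G$, so that every group automorphism $\alpha$ of $G$ lifts to a tree automorphism $\gamma \in \Aut(T)$, and then conjugation by $\gamma$ on $\Aut(T)$ recovers $\alpha$ on $G$.

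First I would establish that the rigid vertex stabilizers $\RiSt_G(v)$ form an algebraically distinguished family inside $G$. Since $G$ is branch, each rigid level stabilizer $\RiSt_G(n)$ has finite index in $G$ and decomposes as the internal direct product of the groups $\RiSt_G(v)$ taken over the vertices $v$ of level $n$. Under the hypothesis that $\RiSt_G(v)$ acts level-transitively on the subtree $T_v$, each factor is infinite and (after arguing via a Krull--Remak--Schmidt type statement tailored to this setting) directly indecomposable. Consequently the $\RiSt_G(v)$ of level $n$ are intrinsically recovered from $\RiSt_G(n)$ as its indecomposable direct factors --- equivalently, as the centralizers of complements --- so any automorphism $\alpha$ of $G$ permutes them.

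Next I would assemble the induced level-wise permutations into a single tree automorphism. Because $\RiSt_G(w) \leq \RiSt_G(v)$ precisely when $w$ is a descendant of $v$, the bijection $\alpha$ induces on vertices of each level respects the ancestor relation, and hence defines a $\gamma \in \Aut(T)$. To finish, fix $g \in G$; the tree automorphism $g$ satisfies $g\, \RiSt_G(v)\, g^{-1} = \RiSt_G(g(v))$, and applying the same identity to $\alpha(g)$ together with the construction of $\gamma$ shows that $\alpha(g)$ and $\gamma g \gamma^{-1}$ induce the same permutation of vertices on every finite level of $T$. Since an element of $\Aut(T)$ is determined by its action on finite levels, the two coincide.

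The main obstacle is the algebraic recognition of the $\RiSt_G(v)$ in the first step, for which level-transitivity of the rigid stabilizers on the subtrees is essential: without it, the factors of $\RiSt_G(n)$ may fail to be directly indecomposable, or fail to be uniquely determined up to ambiguities that a tree conjugation cannot realise, and then $\alpha$ could shuffle subgroups in a way not coming from $\Aut(T)$. Once the rigid stabilizers are pinned down, promoting the compatible level-wise permutations to a tree automorphism and checking that it implements $\alpha$ is essentially a bookkeeping exercise with the definitions.
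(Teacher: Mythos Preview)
The paper does not supply its own proof of this proposition: it is stated as a special case of \cite[Proposition~8.1]{LavrenyukNekrashevych02} and is invoked as a black box. Your sketch is in the spirit of the Lavrenyuk--Nekrashevych argument, so there is no ``different route'' to compare against here.

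That said, your outline has a genuine gap at the crucial point. You write that the $\RiSt_G(v)$ with $v$ of level $n$ are recovered intrinsically \emph{from $\RiSt_G(n)$} as its indecomposable direct factors, and hence that any automorphism $\alpha$ of $G$ permutes them. This conclusion would follow if you already knew that $\alpha(\RiSt_G(n)) = \RiSt_G(n)$, but you have not established that. An abstract automorphism of $G$ sends $\RiSt_G(n)$ to \emph{some} finite-index normal subgroup of $G$, and there is no a~priori reason this image should equal $\RiSt_G(n)$; $G$ will typically have many finite-index normal subgroups decomposing as direct products of the same number of nontrivial factors. In other words, identifying the $\RiSt_G(v)$ ``inside $\RiSt_G(n)$'' is not yet a group-theoretic characterisation of them \emph{inside $G$}, which is what is needed.

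This is exactly where the level-transitivity hypothesis does real work in the cited reference. One needs an intrinsic description of the family $\{\RiSt_G(v) : v \in V(T)\}$ among all subgroups of $G$ --- for instance, via centralisers or via a maximality property among subgroups whose $G$-conjugates pairwise commute and generate a finite-index subgroup --- so that $\alpha$ is forced to permute this family as a whole. Once that is in place, the remainder of your argument (compatibility with the ancestor relation, assembling $\gamma \in \Aut(T)$, and checking $\alpha(g) = \gamma g \gamma^{-1}$ level by level) is fine. I would recommend either supplying that characterisation explicitly or, as the paper does, citing \cite{LavrenyukNekrashevych02} for it.
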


Recall that we write $\widehat{G}$ to denote the profinite completion of a residually finite group $G$.

\begin{lemma}\label{lem:rigidity}
Let $T$ be a rooted tree and let $G_1,G_2 \leq \Aut(T)$ be two branch groups whose restricted stabilizers $\RiSt_{G_i}(v)$ act level-transitively on $T_v$ for every vertex $v$.
Suppose that $G_1$ and $G_2$ satisfy the congruence subgroup property and that $\overline{G_1} = \overline{G_2} \subseteq \Aut(T)$.
Then every isomorphism between $G_1$ and $G_2$ is induced by a conjugation in $\Aut(T)$.
\end{lemma}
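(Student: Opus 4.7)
The plan is to reduce the isomorphism problem to an automorphism problem by passing to the common tree completion $H \defeq \overline{G_1} = \overline{G_2}$, and then to apply Proposition~\ref{prop:rigidity-criterion} to $H$ itself rather than to $G_1$ or $G_2$ individually.

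First, let $\varphi \colon G_1 \to G_2$ be an isomorphism. Since both groups satisfy the congruence subgroup property, the canonical maps $\res_T^{G_i} \colon \widehat{G_i} \to \overline{G_i}$ are isomorphisms, and the common equality $\overline{G_1} = \overline{G_2} = H$ gives canonical topological identifications $\widehat{G_i} \cong H$ which are the identity on $G_i$. The isomorphism $\varphi$ extends to $\widehat{\varphi} \colon \widehat{G_1} \to \widehat{G_2}$ and transfers via these identifications to a (continuous) automorphism $\Phi \colon H \to H$ whose restriction to $G_1$ is $\varphi$. So if we can produce $\gamma \in \Aut(T)$ inducing $\Phi$ by conjugation, restricting to $G_1$ yields the lemma.

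Next, I would verify that $H$ itself satisfies the hypotheses of Proposition~\ref{prop:rigidity-criterion}. Level-transitivity of $\RiSt_H(v)$ on $T_v$ is immediate, since $\RiSt_H(v) \supseteq \RiSt_{G_1}(v)$ and the latter acts level-transitively on $T_v$ by assumption. The branch group condition is the only real point to check: since the rigid vertex stabilizers at a fixed level $\ell$ commute (they act on pairwise disjoint subtrees), the subgroup $\RiSt_H(\ell)$ is the image of the compact product $\prod_{v \in X^\ell} \RiSt_H(v)$ under multiplication in $\Aut(T)$, and is therefore closed in $H$. It contains $\RiSt_{G_1}(\ell)$, which has finite index in $G_1$ because $G_1$ is a branch group. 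Hence finitely many cosets of $\RiSt_H(\ell)$ cover the dense set $G_1$; being finite unions of closed sets, they cover $H$, so $\RiSt_H(\ell)$ has finite index in $H$.

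With the hypotheses verified, Proposition~\ref{prop:rigidity-criterion} yields that $H$ is rigid in $\Aut(T)$, so there exists $\gamma \in \Aut(T)$ with $\Phi(h) = \gamma h \gamma^{-1}$ for every $h \in H$. Restricting to $G_1$ gives $\varphi(g) = \Phi(g) = \gamma g \gamma^{-1}$ for every $g \in G_1$, completing the argument.

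The step I expect to require the most care is the verification that $H$ is itself a branch subgroup of $\Aut(T)$: the definition of $\RiSt_H(\ell)$ as a group generated by subgroups gives no a priori control on its closedness or index, and one has to exploit the fact that the rigid vertex stabilizers at a common level commute in order to write $\RiSt_H(\ell)$ as a closed product, and then use density of $G_1$ to push the finite-index bound from $G_1$ up to $H$.
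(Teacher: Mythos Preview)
Your proof is correct and follows the same approach as the paper: extend the isomorphism via the congruence subgroup property to an automorphism of the common closure $H=\overline{G_1}=\overline{G_2}$, apply Proposition~\ref{prop:rigidity-criterion} to $H$, and restrict back to $G_1$. In fact you are more careful than the paper, which only notes that the rigid stabilizers of $\overline{G}$ contain those of $G_1$ and leaves the verification that $\overline{G}$ is itself a branch group implicit; your closure-and-density argument for the finite-index condition fills in this detail.
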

\begin{proof}
Define $\overline{G} \defeq \overline{G_1} = \overline{G_2}$.
Suppose that $f \colon G_1 \rightarrow G_2$ is an isomorphism and let $\widehat{f} \colon \widehat{G_1} \rightarrow \widehat{G_2}$ be the corresponding isomorphism on the profinite completions.
By the congruence subgroup property, the restrictions $\res^{G_i}_T \colon \widehat{G_i} \rightarrow \overline{G_i}$ are isomorphisms between the profinite completions and the tree completions. The homomorphism
 $f_0 \defeq  \res^{G_2}_{T} \circ \widehat{f} \circ (\res^{G_1}_{T})^{-1}$ is thus an automorphism of $\overline{G}$, i.e.,
the following diagram commutes
\[
\begin{tikzcd}
 \widehat{G_1}\arrow[r, "\widehat{f}"]\arrow[d,"\res^{G_1}_T",swap] & \widehat{G_2} \arrow[d, "\res^{G_2}_T"]\\
 \overline{G}\arrow[r, "f_0"]& \overline{G}
 \end{tikzcd}.
\]
Since the rigid stabilizers of $\overline{G}$ contain those of $G_1$ (and $G_2$) we can therefore apply Proposition~\ref{prop:rigidity-criterion} to deduce that there is some $\gamma \in \Aut(T)$ with $f_0(g) = \gamma g \gamma^{-1}$ for all $g \in \overline{G}$.
For every $g \in G_1 \subseteq \overline{G}$, we therefore obtain $f(g) = f_0(g) = \gamma g \gamma^{-1}$.
\end{proof}
\begin{definition}\label{def:volume-of-tree-auto}
Let $X$ be a finite alphabet and let $T_X$ be the corresponding $\abs{X}$-regular rooted tree with vertex set $X^{\ast}$.
Given a tree automorphism $g \in \Aut(T_X)$ and a number $\ell \in \N$, we consider the subset $\Fix_{\ell}(g) \subseteq X^n$ of vertices of level  $\ell$ that are fixed by $g$.
The \emph{support volume} of $g$ is defined as $\vol(g) \defeq \lim \limits_{\ell \rightarrow \infty} \frac{\abs{X^{\ell} \setminus \Fix_{\ell}(g)}}{\abs{X^{\ell}}}$.
\end{definition}
Given a tree automorphism $g \in \Aut(T)$ and a vertex $v$ with $g(v) \neq v$, it follows that no descendant of $v$ is fixed by $g$.
Thus $\frac{\abs{X^{\ell} \setminus \Fix_{\ell}(g)}}{\abs{X^{\ell}}}$ is a non-decreasing sequence of numbers that are bounded above by $1$.
In particular this tells us that the limit $\vol(g) = \lim \limits_{\ell \rightarrow \infty} \frac{\abs{X^{\ell} \setminus \Fix_{\ell}(g)}}{\abs{X^{\ell}}}$ indeed exists. In fact, the support volume measures the set of elements in the boundary of $T_X$ which are moved by $g$.
The support volume is invariant under conjugation. Let $\alpha \in \Aut(T)$ be an automorphism. Then $\Fix_{\ell}(\alpha g \alpha^{-1}) = \alpha(\Fix_\ell(g))$
and hence $\vol(g) = \vol(\alpha g \alpha^{-1})$.

We return to the construction introduced in Section~\ref{subsec:construction}. Let $X$ be a non-empty finite set with an element $o \in X$ and define $X^+ = X \setminus \{o\}$.  Recall that $\mathcal{S} \defeq (X^+)^\infty$.

\begin{theorem}\label{thm:continuum-of-volume-simple}
Let $X$ be a finite set with $|X|\geq 3$.
Let $G \leq \Aut(T_X)$ be a non-trivial subgroup. 
For every $\omega \in \mathcal{S}$ the set of real numbers
\[
\Set{\vol(g)}{g \in \Gamma^{\{\omega,\omega'\}}_G, \omega' \in \mathcal{S}} \subseteq [0,1]
\]
is uncountable.
\end{theorem}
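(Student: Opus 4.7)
The plan is to fix a single non-trivial element $g \in G$ (which exists since $G$ is non-trivial) and, for each sequence $\omega' \in \mathcal{S}$, to consider the specific element
\[
h_{\omega'} \defeq \widetilde{g}^\omega \cdot (\widetilde{g}^{\omega'})^{-1} \in \Gamma_{G}^{\{\omega,\omega'\}},
\]
and to show that $\omega' \mapsto \vol(h_{\omega'})$ already takes uncountably many values. Note first that $\vol(g) > 0$: if $g$ moves some vertex $v$ at level $m$, then $g$ permutes the subtrees at $v$ and $g(v)$ and hence moves all $|X|^{\ell-m}$ descendants of $v$ at each level $\ell \geq m$, giving $\vol(g) \geq |X|^{-m} > 0$.

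To compute $\vol(h_{\omega'})$, I would exploit that both $\widetilde{g}^\omega$ and $\widetilde{g}^{\omega'}$ have trivial root permutation, so $h_{\omega'}$ fixes the first level of $T_X$ and its state at each $x \in X$ is simply the product of the corresponding states. A direct case analysis shows that the state at $o$ equals $\widetilde{g}^{L(\omega)}(\widetilde{g}^{L(\omega')})^{-1}$, an element of the same form but for the shifted sequences; if $\omega_1 = \omega'_1$ then all other states are trivial (the $g$ and $g^{-1}$ at the common vertex cancel), while if $\omega_1 \neq \omega'_1$ then the state at $\omega_1$ is $g$ and the state at $\omega'_1$ is $g^{-1}$, with all remaining states trivial. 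Counting non-fixed vertices at each level, using that $|\Fix_\ell(g)| = |\Fix_\ell(g^{-1})|$, and passing to the limit yields the recursion
\[
\vol(h_{\omega'}) = \frac{1}{|X|}\,\vol\bigl(\widetilde{g}^{L(\omega)}(\widetilde{g}^{L(\omega')})^{-1}\bigr) + \delta_1 \cdot \frac{2\vol(g)}{|X|},
\]
where $\delta_k \defeq \mathbf{1}[\omega_k \neq \omega'_k]$. Iterating the recursion $N$ times (the residual factor is bounded by $|X|^{-N}$) and letting $N \to \infty$ gives the closed form
\[
\vol(h_{\omega'}) = 2\vol(g)\sum_{k=1}^\infty \frac{\delta_k}{|X|^k}.
\]

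To conclude uncountability, I would argue as follows. Since $|X| \geq 3$ we have $|X^+| \geq 2$, so for each $k$ there exists at least one element of $X^+$ equal to $\omega_k$ and at least one different from $\omega_k$; varying $\omega'$ therefore realizes every possible sequence $(\delta_k) \in \{0,1\}^\N$. The map $(\delta_k) \mapsto \sum_k \delta_k/|X|^k$ is injective precisely when $|X|\geq 3$: if two such sequences first disagree at index $K$, the maximal tail contribution $\sum_{k>K}|X|^{-k} = \frac{1}{(|X|-1)|X|^K}$ is strictly smaller than $|X|^{-K}$, so the discrepancies cannot cancel. Hence $\{\vol(h_{\omega'}) : \omega'\in\mathcal{S}\}$ contains an injective image of $\{0,1\}^\N$ and is uncountable. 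The main subtlety I anticipate is the wreath bookkeeping, particularly justifying the factor $2$ in the case $\omega_1 \neq \omega'_1$; the hypothesis $|X|\geq 3$ enters at exactly one point, namely to make the base-$|X|$ encoding with digits in $\{0,1\}$ unique (which fails in base $2$ due to the classical $0.1000\ldots = 0.0111\ldots$ ambiguity).
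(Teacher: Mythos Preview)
Your proof is correct and follows essentially the same approach as the paper: both take a non-trivial $t\in G$, form the element $(\widetilde{t}^{\omega})^{-1}\widetilde{t}^{\omega'}$ (you use its inverse, which has the same support volume), and compute its volume as $2\vol(t)\sum_{k}\delta_k|X|^{-k}$ where $\delta_k$ records whether $\omega_k\neq\omega'_k$, then use $|X|\ge 3$ to conclude injectivity of this base-$|X|$ expansion with $\{0,1\}$ digits. The only cosmetic difference is that the paper parameterizes directly by subsets $S\subseteq\N$ (choosing $\omega'_n\in X^+\setminus\{\omega_n\}$ for $n\in S$) and reads off the action on the subtrees $(T_X)_{o^{n-1}\omega_n}$ and $(T_X)_{o^{n-1}\omega'_n}$ in one step, whereas you derive the same formula via the recursive wreath decomposition; your final remark that $|X|\ge 3$ ``enters at exactly one point'' slightly undersells things, since you also (correctly) used it to get $|X^+|\ge 2$ and hence to realize every $\{0,1\}$-sequence.
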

\begin{proof}
Let $t \in G$ be an element, which acts non-trivially on $T_X$; in particular, $\vol(t) > 0$.
Since $|X| \geq 3$, we can pick an element $z_n \in X \setminus \{o,\omega_n\}$ for every $n \in \N$.
Let $S \subseteq \N$ be a set of natural numbers. 
We define $\omega' = \omega'(S)$ by
\[
	\omega_n' =  \begin{cases} \omega_n & \text{ if } n \not\in S\\
								z_n & \text{ if } n \in S. \end{cases}
\]
Consider the element 
$g = (\widetilde{t}^{\omega})^{-1}\widetilde{t}^{\omega'} \in \Gamma_G^{\{\omega,\omega'\}}$. Then $g$ acts like $t^{-1}$ on $(T_X)_{o^{n-1}\omega_n}$
and like $t$ on $(T_X)_{o^{n-1}z_n}$ for every $n \in S$ and acts trivially on all vertices not contained in one of these subtrees. We obtain
\[
	\vol(g) = \sum_{n \in S} \frac{2\vol(t)}{|X|^{n}} = 2\vol(t) \sum_{n \in S} |X|^{-n}
\]
and we observe that this number uniquely determines the set $S$.
Indeed, since $|X|\geq 3$ the first non-zero term dominates the series. This completes the proof of the theorem, using that there are uncountable many subsets $S \subseteq \N$.
\end{proof}
\begin{corollary}\label{cor:uncountably-many-iso-types-simple}
Let $G \leq \Aut(T_X)$ be a countable, perfect, self-similar subgroup that satisfies property~H.
For every $\omega \in \mathcal{S}$ 
the family of groups $(\Gamma_G^{\{\omega,\omega'\}})_{\omega' \in \mathcal{S}}$ contains uncountably many distinct isomorphism types.
\end{corollary}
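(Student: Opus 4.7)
The plan is to use the support volume $\vol(\cdot)$ as an isomorphism invariant. For each $\omega' \in \mathcal{S}$ define
\[
V(\omega') \defeq \Set{\vol(g)}{g \in \Gamma_{G}^{\{\omega,\omega'\}}} \subseteq [0,1].
\]
Since $G$ is countable, the group $\Gamma_{G}^{\{\omega,\omega'\}} \leq \Aut(T_X)$ is countable as well, so $V(\omega')$ is countable. On the other hand, Theorem~\ref{thm:continuum-of-volume-simple} tells us that $\bigcup_{\omega' \in \mathcal{S}} V(\omega')$ is uncountable. If only countably many isomorphism types occurred in the family $(\Gamma_{G}^{\{\omega,\omega'\}})_{\omega' \in \mathcal{S}}$ and if $V(\omega')$ depended only on the isomorphism type of $\Gamma_{G}^{\{\omega,\omega'\}}$, then the displayed union would be a countable union of countable sets, a contradiction. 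Thus the whole reduction boils down to proving that isomorphic groups in this family yield the same $V$.

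The core step is therefore to verify that an isomorphism $\Gamma_{G}^{\{\omega,\omega_1'\}} \cong \Gamma_{G}^{\{\omega,\omega_2'\}}$ is induced by conjugation inside $\Aut(T_X)$; once this is done, conjugation invariance of $\vol$ (as noted immediately after Definition~\ref{def:volume-of-tree-auto}) forces $V(\omega_1') = V(\omega_2')$. My intended tool is Lemma~\ref{lem:rigidity}, whose four hypotheses I would verify for each $\Omega = \{\omega,\omega_i'\}$ as follows. The branch property and the equality $\St_{\Gamma_G^{\Omega}}(n) = \RiSt_{\Gamma_G^{\Omega}}(n)$ are delivered by Corollary~\ref{cor:structure-of-rist-higher-level}; the congruence subgroup property is Theorem~\ref{thm:CSP-for-Omega}; and the coincidence of the two tree completions inside $\Aut(T_X)$ follows from Corollary~\ref{cor:profinite-completion-is-iterated-wreath}, because both constructions start from the same subgroup $G$ and hence have the same image $Q \leq \Sym(X)$, so both closures equal the iterated wreath product $\wr_X^{\infty} Q$.

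The one remaining hypothesis -- that for every vertex $v$ the rigid stabilizer $\RiSt_{\Gamma_G^{\Omega}}(v)$ acts level-transitively on the subtree $(T_X)_v$ -- is the place where I expect the only real bookkeeping. Corollary~\ref{cor:structure-of-rist-higher-level} identifies $\RiSt_{\Gamma_G^{\Omega}}(v)$ with $\iota_v(\Gamma_G^{L^{|v|}(\Omega)})$, and the natural identification of $(T_X)_v$ with $T_X$ (prepending $v$) conjugates the action of $\iota_v(\Gamma_G^{L^{|v|}(\Omega)})$ on $(T_X)_v$ onto the action of $\Gamma_G^{L^{|v|}(\Omega)}$ on $T_X$. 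The latter action is level-transitive, again by the ``in particular'' clause of Corollary~\ref{cor:structure-of-rist-higher-level} applied to the shifted parameter $L^{|v|}(\Omega)$, which is itself a non-empty subset of $\mathcal{S}$. With this verification complete, Lemma~\ref{lem:rigidity} realizes any isomorphism $f \colon \Gamma_G^{\{\omega,\omega_1'\}} \to \Gamma_G^{\{\omega,\omega_2'\}}$ as $f(g) = \gamma g \gamma^{-1}$ for some $\gamma \in \Aut(T_X)$, yielding $V(\omega_1') = V(\omega_2')$ and hence the uncountability of isomorphism types.
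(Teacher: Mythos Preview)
Your proposal is correct and follows essentially the same route as the paper's proof: verify the hypotheses of Lemma~\ref{lem:rigidity} (branch, level-transitive rigid stabilizers, $\CSP$, common tree completion) so that isomorphisms are conjugations and hence preserve support volumes, then combine the uncountability from Theorem~\ref{thm:continuum-of-volume-simple} with the countability of each $V(\omega')$. The only point the paper makes explicit that you leave implicit is the hypothesis $|X|\geq 3$ of Theorem~\ref{thm:continuum-of-volume-simple}; this is automatic here since a nontrivial perfect group cannot act transitively on a set of size $\leq 4$, so in fact $|X|\geq 5$.
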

\begin{proof}
Recall that
by Corollary \ref{cor:structure-of-rist-higher-level} the groups $\Gamma_G^{\Omega}$ are branch groups and the rigid stabilizers act level-transitively. By Theorem \ref{thm:CSP-for-Omega} these groups have the congruence subgroup property and by Corollary~\ref{cor:profinite-completion-is-iterated-wreath} the closure of $\Gamma_G^{\Omega}$ in $\Aut(T_X)$ does not depend on $\Omega$. We conclude using Lemma \ref{lem:rigidity} that every isomorphism between two of the groups $\Gamma_G^{\Omega}$ is induced by a conjugation in $\Aut(T_X)$. In particular, this means, that isomorphisms between these groups preserve the support volume of elements.

We note that $G$ is perfect and acts transitively on $X$, hence we must have $|X| \geq 5$. Theorem \ref{thm:continuum-of-volume-simple} therefore shows, that the set of support volumes of elements in the groups $\Gamma_G^{\{\omega,\omega'\}}$ is uncountable. However, $G$ is countable and so the groups $\Gamma_G^{\{\omega,\omega'\}}$ are countably generated and thus countable.
In conclusion each isomorphism type contributes at most countably many numbers to the uncountable set of support volumes and consequently uncountably many isomorphisms types have to occur. 
\end{proof}
In the next section we will discuss a concrete example of a group $G$ where a similar argument can be used to show that the numer of isomorphism types in the family $(\Gamma_G^\omega)_{\omega \in \mathcal{S}}$ is uncountable. 
\begin{remark}
We briefly return to Grothendieck's question.
If $G \leq \Aut(T)$ is a finitely generated group which satisfies the assumptions of Corollary \ref{cor:uncountably-many-iso-types-simple}, then the groups $(\Gamma_{G}^\Omega)_{\Omega \subseteq \mathcal{S}}$ where $\Omega$ runs in the finite subsets of $\mathcal{S}$ form an uncountable directed system of finitely generated residually finite groups in which every inclusion induces an isomorphism between profinite completions (see Corollary~\ref{cor:induced-isomorphism-general}).
\end{remark}

\section{Matrix groups acting on trees}\label{sec:matrix-groups}

Given a prime number $p$ and a natural number $n$ we consider the
set $\mathcal{A}_{p,n} \defeq \{0,\ldots,p-1\}^n$ which takes the role of the alphabet (called $X$ in the previous sections).
Let $\mathcal{A}_{p,n}^{\ast}$ denote the free monoid generated by $\mathcal{A}_{p,n}$, i.e.\ the set of (finite) words over $\mathcal{A}_{p,n}$.
Let $T_{p,n}$ denote the Cayley graph of $\mathcal{A}_{p,n}^{\ast}$ with respect to $\mathcal{A}_{p,n}$.
Clearly $T_{p,n}$ is a tree whose boundary $\partial T_{p,n}$ can be identified with the set $\mathcal{A}_{p,n}^{\infty}$ of infinite sequences over $\mathcal{A}_{p,n}$.
The element $0 = (0,0,\dots,0) \in \mathcal{A}_{p,n}$ is the distinguished element and
we write $\mathcal{S}_{p,n}$ to denote the space of infinite sequences over $\mathcal{A}_{p,n} \setminus \{0\}$.

\begin{definition}\label{def:affine-group}
Given a commutative, unital ring $R$ and a natural number $n \in \N$ we write $\SAff_n(R)$ to denote the group of affine transformations of $R^n$ whose linear part lies in $\SL_n(R)$. We note that $\SAff_n(R) \cong R^n \rtimes \SL_n(R)$.
\end{definition}

It is a well-known fact that $\SL_n(\Z)$ and $\SL_n(\F_p)$ are perfect for $n \geq 3$ (see for example~\cite[1.2.15]{Hahn-OMeara} and \cite[p.~46]{Wilson-FSG}).
In the following we need an affine version of this observation.

\begin{lemma}\label{lem:Affn-is-perfect}
The groups $\SAff_n(\Z)$ and $\SAff_n(\F_p)$ are perfect for $n \geq 3$.
\end{lemma}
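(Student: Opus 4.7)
The plan is to exploit the semidirect product structure $\SAff_n(R)=R^n\rtimes\SL_n(R)$ for $R=\Z$ and $R=\F_p$, and show that the commutator subgroup $[\SAff_n(R),\SAff_n(R)]$ contains both factors. Since these generate $\SAff_n(R)$, perfectness follows.

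First I would use the cited fact that $\SL_n(R)$ is perfect for $n\geq 3$ (for both $R=\Z$ and $R=\F_p$). Identifying $\SL_n(R)$ with its image in $\SAff_n(R)$ (trivial translation), we immediately obtain
\[
\SL_n(R)=[\SL_n(R),\SL_n(R)]\subseteq [\SAff_n(R),\SAff_n(R)].
\]

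Next I would handle the translation subgroup $R^n\trianglelefteq\SAff_n(R)$. Writing $t_v$ for translation by $v\in R^n$ and computing inside the semidirect product, for $A\in\SL_n(R)$ one has $At_vA^{-1}=t_{Av}$, so
\[
[A,t_v]=t_{(A-I)v}.
\]
Taking $A=I+E_{ij}$ (an elementary matrix with $i\neq j$, which lies in $\SL_n(R)$) and $v=e_j$ yields $[A,t_{e_j}]=t_{e_i}$. Thus every standard basis translation, and hence every element of $R^n$, lies in the commutator subgroup. Since $\SAff_n(R)$ is generated by $R^n$ together with $\SL_n(R)$, the conclusion $\SAff_n(R)=[\SAff_n(R),\SAff_n(R)]$ follows.

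I do not expect a serious obstacle: the only nontrivial input is the perfectness of $\SL_n$ over $\Z$ and $\F_p$ for $n\geq 3$, which is already cited from Hahn--O'Meara and Wilson. The rest is a short commutator computation in the semidirect product, and the hypothesis $n\geq 3$ is needed only to invoke perfectness of $\SL_n$ (the translation argument works for any $n\geq 2$).
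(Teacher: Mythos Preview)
Your proof is correct and follows essentially the same route as the paper: invoke perfectness of $\SL_n$ for $n\geq 3$, then exhibit each basis translation as a commutator of a translation with an elementary matrix. The only cosmetic difference is the order of the commutator and the notation for elementary matrices.
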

\begin{proof}
For every $v \in \Z^n$ let $T_v$ denote the translation by $v$.
Since $\SL_n(\Z)$ is perfect (for $n \geq 3$), it suffices to show that every translation $T_{e_i}$ by a standard unit vector $e_i \in \Z^n$ can be written as a commutator in $\SAff_n(\Z)$.
To see this, we consider the elementary matrices $E_{i,j} \in \SL_n(\Z)$ for $1 \leq i < j \leq n$ which are defined by $E_{i,j} \cdot e_j = e_i+e_j$ and $E_{i,j} \cdot e_k = e_k$ for $k \neq j$.
Then
\[
[T_{-e_j},E_{i,j}]
= T_{-e_j} E_{i,j} T_{-e_j}^{-1} E_{i,j}^{-1}
= T_{-e_j} + T_{E_{i,j} \cdot e_j}
= T_{-e_j} + T_{e_i+e_j}
= T_{e_i}
\]
and the result for $\SAff(\Z)$ follows. The same argument applies to $\SAff(\F_p)$.
\end{proof}

The set $\mathcal{A}_{p,n}^{\infty}$ can be identified with $\Z_p^n$ via $(x_n) \mapsto \sum \limits_{n=0}^{\infty} p^i x_i$
and similarly the $\ell$-th level of the tree can be identified with $(\Z/p^\ell\Z)^n$.
In view of this, the natural action of $\SAff_n(\Z_p)$ on $\Z_p^n$ induces an action on $T_{p,n}$. In fact, the action on the $\ell$-th level  factors through $\SAff_n(\Z / p^{\ell}\Z)$.

\begin{lemma}\label{lem:affine-stabilizers}
The subgroup $\SAff_n(\Z) \leq \Aut(T_{p,n})$ is self-similar and satisfies property~H.
\end{lemma}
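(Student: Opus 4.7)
The plan is to prove the two assertions separately by explicit computation using the identification of the $\ell$-th level of $T_{p,n}$ with $(\Z/p^\ell\Z)^n$. Under this identification, a word $xw$ with $x \in \mathcal{A}_{p,n}$ and $w \in \mathcal{A}_{p,n}^\ell$ corresponds to the element $x + p\,w \in (\Z/p^{\ell+1}\Z)^n$, where $w$ is read as a base-$p$ expansion coefficient vector. Every element of $\SAff_n(\Z)$ has the form $g(v) = Av + b$ for some $A \in \SL_n(\Z)$ and $b \in \Z^n$, and I will work directly with this presentation.

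For self-similarity, I would compute the wreath decomposition of $g = (A,b)$ by evaluating $g$ on $xw$. This gives $g(xw) = Ax + b + pAw$. Since $Ax + b \in \Z^n$, there is a unique decomposition $Ax + b = \sigma_g(x) + p\,c_x$ with $\sigma_g(x) \in \mathcal{A}_{p,n}$ (the residue modulo $p$) and $c_x \in \Z^n$. Plugging back in, $g(xw) = \sigma_g(x) + p(c_x + Aw)$, which reads off as $\sigma_g(x)\cdot g_x(w)$ with $g_x(w) = Aw + c_x$. Hence $g_x \in \SAff_n(\Z)$, proving self-similarity.

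For property~H, I would first note transitivity on the first level: the translation $T_x(v) = v + x$ lies in $\SAff_n(\Z)$ for every $x \in \mathcal{A}_{p,n}$ and sends $0$ to $x$, so the first-level action is transitive. Second, given distinct $x, y \in \mathcal{A}_{p,n}$, I would exhibit an element of $\St_{\SAff_n(\Z)}(x)$ moving $y$ as follows. Consider $g(v) = Av + (x - Ax)$ for some $A \in \SL_n(\Z)$ to be chosen; then $g(x) = x$ exactly, and $g(y) - y = A(y-x) - (y-x)$. Since $y - x \not\equiv 0 \pmod p$, it suffices to pick $A \in \SL_n(\Z)$ whose reduction $\overline{A} \in \SL_n(\F_p)$ satisfies $\overline{A}\,\overline{y-x} \neq \overline{y-x}$; this is possible because $\SL_n(\Z) \twoheadrightarrow \SL_n(\F_p)$ is surjective and, for $n \geq 2$, $\SL_n(\F_p)$ acts transitively on $\F_p^n \setminus \{0\}$, so in particular not every element of $\SL_n(\F_p)$ fixes the given non-zero vector.

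The only potential subtlety is checking that $c_x$ really lies in $\Z^n$ (which follows from the definition $\sigma_g(x) \equiv Ax + b \pmod p$) and ensuring the needed $\SL_n(\F_p)$-matrix exists, which reduces to the standard transitivity of $\SL_n(\F_p)$ on non-zero vectors; both steps are straightforward, so no real obstacle is expected.
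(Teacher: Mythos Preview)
Your proof is correct. The self-similarity computation is essentially identical to the paper's: both write $Ax+b = x' + p\,c$ with $x' \in \mathcal{A}_{p,n}$ and $c \in \Z^n$, and read off the state $g_x(w) = Aw + c \in \SAff_n(\Z)$.

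For property~H the two arguments diverge. The paper simply observes that the action on the first level factors through the natural action of $\SAff_n(\F_p)$ on $\F_p^n$, which is $2$-transitive; property~H follows immediately, since $2$-transitivity gives both level-transitivity and, for $|X|\geq 3$, an element fixing any prescribed $x$ while moving any prescribed $y\neq x$. Your route is more hands-on: you verify transitivity via translations and then explicitly build the required stabilizer element as $v\mapsto Av + (x-Ax)$, reducing the problem to finding $\overline{A}\in \SL_n(\F_p)$ not fixing the non-zero vector $\overline{y-x}$, which you obtain from transitivity of $\SL_n(\F_p)$ on $\F_p^n\setminus\{0\}$ together with surjectivity of $\SL_n(\Z)\to\SL_n(\F_p)$. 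Both are valid; the paper's $2$-transitivity observation is shorter and conceptually cleaner, while yours has the minor advantage of making the needed stabilizer element completely explicit.
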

\begin{proof}
Let $A \in \SL_n(\Z)$, let $b \in \Z^n$, and let $g \in \SAff_n(\Z)$ be the element defined by $g(v) = Av + b$.
Let $u \in \Z_p$ be an element of the form $u = x + pw$ with $x \in \mathcal{A}_{p,n}$ und $w \in \Z_p$.
Let further $x' \in \mathcal{A}_{p,n}$ and $b' \in \Z_p$ be such that $Ax+b = x' + pb'$.
Then we have
\[
g(u)
= A(x + pw) + b
= Ax+b + pAw
= x' + p(Aw+b'),
\]
which tells us that $g_x$ is given by $g_x(w) = Aw+b'$.
As $Ax+b = x' + pb'$ implies $b' \in \Z$ and $g_x \in \SAff_n(\Z)$, we deduce that $\SAff(\Z)$ is self-similar.

The action of $\SAff(\Z)$ on the first level $A_{p,n}$ factors through $\SAff(\F_p)$. In fact, this is the natural action of $\SAff(\F_p)$ on $\F_p^n$. Since this action is $2$-transitive, it clearly satisfies property~H.
\end{proof}

\begin{definition}\label{def:collatz-automorphism}
Let $\omega = (\omega_n)_{n \in \N} \in \mathcal{S}_{p,n}$.
For $g \in \SAff_n(\Z)$ we define the map $\widetilde{g}^{\omega} \colon \Z_p^n \rightarrow \Z_p^n$ by
\[
\widetilde{g}^{\omega}(u) =
\begin{cases}
p^{\ell-1} \omega_{\ell} + p^{\ell}g(v),& \text{if } u = p^{\ell-1} \omega_{\ell} + p^{\ell}v \text{ for some } \ell \in \N \text{ and } v \in \Z_p^n\\
u,& \text{if } u \not\equiv p^{\ell-1} \omega_{\ell} \mod p^{\ell} \text{ for every } \ell \in \N.
\end{cases}
\]
Further we define $\widetilde{\SAff_n(\Z)}^{\omega} \defeq \Set{\widetilde{g}^{\omega}}{g \in \SAff_n(\Z)}$.
\end{definition}

From this definition one can easily see that $\widetilde{\SAff_n(\Z)}^{\omega}$ is a group and that
\[
\widetilde{\ \cdot \ }^{\omega} \colon \SAff_n(\Z) \rightarrow \widetilde{\SAff_n(\Z)}^{\omega},\ g \mapsto \widetilde{g}^{\omega}
\]
is a group isomorphism.
The elements $\widetilde{g}^{\omega}$ can also be defined recursively with the \emph{left shift operator}
\[
L \colon \mathcal{A}_{p,n}^{\infty} \rightarrow \mathcal{A}_{p,n}^{\infty},\ (x_1,x_2,x_3,\ldots) \mapsto (x_2,x_3,x_4\ldots).
\]

Indeed, given a sequence $\omega = (\omega_n)_{n \in \N} \in \mathcal{S}_{p,n}$ and an element $g \in \SAff_n(\Z)$ then we can write
\[
\widetilde{g}^{\omega} = (g_x)_{x \in \mathcal{A}_{p,n}},
\]
where
\[
g_x =
\begin{cases}
\widetilde{g}^{S(\omega)},& \text{if } x = 0\\
g,& \text{if } x = \omega_1\\
\id,              & \text{otherwise.}
\end{cases}
\]
This is exactly the formula used in Definition \ref{def:omega-elements}. For every subset $\omega \in \mathcal{S}_{p,n}$ we define the subgroup  $\Gamma_{p,n}^{\omega} \leq \Aut(T_{p,n})$ to be the group generated by $\SAff_n(\Z)$ and $\widetilde{\SAff_n(\Z)}^{\omega}$. Recall that for a set $\Omega \subseteq \mathcal{S}_{p,n}$ we define the subgroup $\Gamma_{p,n}^{\Omega} \leq \Aut(T_{p,n})$ to be generated by the groups $\Gamma_{p,n}^{\omega}$ with $\omega \in \Omega$.
Lemma \ref{lem:Affn-is-perfect} and Lemma \ref{lem:affine-stabilizers} allow us to use the results developed in the foregoing sections.
In particular, we obtain the following result.

\begin{corollary}\label{cor:summary}
Let $n \geq 3$ and let $\Omega, \Omega_1, \Omega_2 \subseteq \mathcal{S}_{p,n}$ be non-empty subsets.
\begin{enumerate}
\item Then $\Gamma_{p,n}^{\Omega}$ is a level-transitive, just infinite branch group which contains a non-abelian free group and satisfies the congruence subgroup property.
The profinite completion is isomorphic to the closure of $\Gamma_{p,n}^{\Omega}$ in $\Aut(T_{p,n})$ and does not depend $\Omega$.
\item If $\Gamma_{p,n}^{\Omega_1}$ and $\Gamma_{p,n}^{\Omega_2}$ are isomorphic, then they are already conjugated in $\Aut(T_{p,n})$.
\item For $\Omega_1 \subseteq \Omega_2$ the inclusion  $\Gamma_{p,n}^{\Omega_1} \to \Gamma_{p,n}^{\Omega_2}$ induces an isomorphism between the profinite completions.
\end{enumerate}
\end{corollary}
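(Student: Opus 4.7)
My plan is to treat this corollary as a bookkeeping exercise: each of the three assertions should follow by quoting a previously established general result, once the two standing hypotheses on $G = \SAff_n(\Z) \leq \Aut(T_{p,n})$ are verified. Those hypotheses are already in hand: Lemma \ref{lem:Affn-is-perfect} tells us $\SAff_n(\Z)$ is perfect (this is where $n \geq 3$ enters), and Lemma \ref{lem:affine-stabilizers} tells us it is self-similar and satisfies property~H. So I would spend a sentence setting up this notation and then invoke the machinery of Sections \ref{sec:trees-and-construction} and \ref{sec:csp}.

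For part (1), I would assemble the statement from three citations. Corollary \ref{cor:structure-of-rist-higher-level} yields that $\Gamma_{p,n}^{\Omega}$ is a level-transitive branch group; Theorem \ref{thm:CSP-for-Omega} upgrades this to just infiniteness and the congruence subgroup property; and Corollary \ref{cor:profinite-completion-is-iterated-wreath} identifies $\widehat{\Gamma_{p,n}^{\Omega}}$ with $\wr_{\mathcal{A}_{p,n}}^{\infty} Q$, where $Q \leq \Sym(\mathcal{A}_{p,n})$ is the image of the natural action of $\SAff_n(\F_p)$ on $\F_p^n$. Since $Q$ does not involve $\Omega$, neither does the profinite completion, and it coincides with the tree closure of $\Gamma_{p,n}^{\Omega}$. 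The only genuinely non-automatic point is the existence of a non-abelian free subgroup, but this is given for free: $\SAff_n(\Z) \subseteq \Gamma_{p,n}^{\Omega}$ by construction, and $\SL_n(\Z) \leq \SAff_n(\Z)$ classically contains a non-abelian free subgroup for $n \geq 2$.

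For part (2), I would apply Lemma \ref{lem:rigidity} to $\Gamma_{p,n}^{\Omega_1}$ and $\Gamma_{p,n}^{\Omega_2}$. Both groups are branch groups with CSP and have a common tree closure by part (1). The one extra hypothesis to check is that the rigid stabilizer $\RiSt_{\Gamma_{p,n}^{\Omega_i}}(v)$ acts level-transitively on the subtree $(T_{p,n})_v$ for every vertex $v$. This reduces to the same level-transitivity statement for a group of the same kind, via the formula $\RiSt_{\Gamma_G^{\Omega}}(v) = \iota_v(\Gamma_G^{L^{\ell}(\Omega)})$ from Corollary \ref{cor:structure-of-rist-higher-level}, which also records precisely that each $\Gamma_G^{\Omega'}$ is level-transitive. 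With all hypotheses verified, Lemma \ref{lem:rigidity} produces an element of $\Aut(T_{p,n})$ implementing the given isomorphism.

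Part (3) is a direct citation: it is the second assertion of Corollary \ref{cor:induced-isomorphism-general} with $G = H = \SAff_n(\Z)$, so the coincidence of images in $\Sym(\mathcal{A}_{p,n})$ is trivial and the containment $\Omega_1 \subseteq \Omega_2$ is given by hypothesis. I do not expect a genuine obstacle anywhere in this argument; the deepest input, Lemma \ref{lem:rigidity} (itself resting on Proposition \ref{prop:rigidity-criterion} from Lavrenyuk--Nekrashevych), has already been packaged so that here it is a plug-and-play. The only care needed is to match the hypotheses of each cited result precisely, and in particular to track the rigid-stabilizer level-transitivity condition used in part (2).
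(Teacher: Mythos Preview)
Your proposal is correct and follows essentially the same route as the paper: both invoke Corollary~\ref{cor:structure-of-rist-higher-level}, Theorem~\ref{thm:CSP-for-Omega}, and Corollary~\ref{cor:profinite-completion-is-iterated-wreath} for part~(1), Lemma~\ref{lem:rigidity} for part~(2), and Corollary~\ref{cor:induced-isomorphism-general} for part~(3), together with the classical fact that $\SL_n(\Z)$ contains a non-abelian free subgroup. Your write-up is somewhat more explicit in tracking the hypotheses (notably the rigid-stabilizer level-transitivity needed for Lemma~\ref{lem:rigidity}), but there is no substantive difference in approach.
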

\begin{proof}
It is well-known that $\SL_n(\Z)$ contains non-abelian free subgroups.
By Corollary \ref{cor:structure-of-rist-higher-level} the groups $\Gamma_{p,n}^{\Omega}$ are branch groups and the rigid stabilizers act level-transitively. By Theorem \ref{thm:CSP-for-Omega} these groups have the congruence subgroup property and by Corollary~\ref{cor:profinite-completion-is-iterated-wreath} the closure of $\Gamma_{p,n}^{\Omega}$ in $\Aut(T_{p,n})$ is isomorphic to the profinite completion and does not depend on $\Omega$. Lemma \ref{lem:rigidity} shows that every isomorphism between two of the groups is induced by a conjugation in $\Aut(T_{p,n})$.
The third assertion follows from Corollary~\ref{cor:induced-isomorphism-general}.
\end{proof}
It follows from Corollary~\ref{cor:uncountably-many-iso-types-simple} that the number of isomorphism types among the groups $\Gamma_{p,n}^{\{\omega, \omega'\}}$ is uncountable. A variation of the argument shows that we can also find uncountably many groups up to isomorphism in the family $(\Gamma_{p,n}^{\omega})_{\omega \in \mathcal{S}_{p,n}}$. In particular, most of these groups don't admit a finite presentation.

\begin{proposition}\label{prop:continuum-of-isom-classes}
For every $n \geq 3$ and every prime $p$  there are uncountably many isomorphism classes of groups of the form $\Gamma_{p,n}^{\omega}$.
\end{proposition}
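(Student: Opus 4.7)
The plan is to mimic the approach of Corollary~\ref{cor:uncountably-many-iso-types-simple}, using support volumes of elements as an isomorphism invariant, but constructing the witness elements entirely inside a single group $\Gamma_{p,n}^{\omega}$. By Corollary~\ref{cor:summary}(2), any isomorphism between $\Gamma_{p,n}^{\omega}$ and $\Gamma_{p,n}^{\omega'}$ is induced by a conjugation in $\Aut(T_{p,n})$; since such conjugation preserves the support volume of every element, the set
\[
V(\omega) \defeq \Set{\vol(g)}{g \in \Gamma_{p,n}^{\omega}} \subseteq [0,1]
\]
depends only on the isomorphism class of $\Gamma_{p,n}^{\omega}$. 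Because $\Gamma_{p,n}^{\omega}$ is finitely generated, $V(\omega)$ is countable; hence if there were at most countably many isomorphism classes among the $\Gamma_{p,n}^{\omega}$, the union $\bigcup_{\omega}V(\omega)$ would be countable. The proof thus reduces to constructing, for $\omega$ in some uncountable family, elements $g_{\omega}\in\Gamma_{p,n}^{\omega}$ such that $\vol(g_{\omega})$ takes uncountably many distinct values.

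The next step is to build $g_{\omega}$ by imitating the element $(\widetilde{t}^{\omega})^{-1}\widetilde{t}^{\omega'}$ from Theorem~\ref{thm:continuum-of-volume-simple}, with the crucial change that the second ``tilde'' is replaced by a conjugate $s\widetilde{t}^{\omega}s^{-1}$ with $s\in\SAff_n(\Z)$ chosen to deflect the singular ray. Fix a non-trivial $t\in\SAff_n(\Z)$ and, using the $2$-transitivity of $\SAff_n(\F_p)$ on $\F_p^n$, choose rooted permutations $s^{(k)}\in\SAff_n(\Z)$ that fix $0$ and send $\omega_k$ to some prescribed marker $a_k\in\mathcal{A}_{p,n}\setminus\{0,\omega_k\}$. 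A wreath-decomposition computation, analogous to those in Lemma~\ref{lem:structure-of-rist}, shows that the commutator-like element $(\widetilde{t}^{\omega})^{-1}\cdot s^{(1)}\widetilde{t}^{\omega}(s^{(1)})^{-1}$ leaves a non-trivial trace at the level-$1$ vertices $\omega_1$ and $a_1$ (contributing $2\vol(t)/p^n$ to the volume), while at the vertex $0$ the infinite ``tails'' $\widetilde{t}^{\pm L(\omega)}$ reduce to a controlled commutator that can be handled recursively inside the rigid stabilizer $\RiSt_{\Gamma_{p,n}^{\omega}}(0)=\iota_0(\Gamma_{p,n}^{L(\omega)})$ (Corollary~\ref{cor:structure-of-rist-higher-level}).

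Iterating this construction and using the rigid stabilizer elements $\iota_{0^{k-1}\omega_k}(\SAff_n(\Z))\subseteq \Gamma_{p,n}^{\omega}$ to toggle individual levels, one obtains, for each subset $S\subseteq\N$ realisable along $\omega$, an element $g_{\omega,S}\in\Gamma_{p,n}^{\omega}$ whose support volume is a geometric-series sum of the form
\[
\vol(g_{\omega,S}) \;=\; 2\vol(t)\sum_{k\in S} p^{-nk}.
\]
By choosing sequences $\omega$ so that the attainable index sets $S$ themselves range over an uncountable family of subsets of $\N$ (for instance, by letting $\omega_k$ independently sit in one of two disjoint non-empty subsets of $\mathcal{A}_{p,n}\setminus\{0\}$ at each level), we see, exactly as in the proof of Theorem~\ref{thm:continuum-of-volume-simple}, that the leading non-zero term in base $p^n$ dominates, so distinct patterns yield distinct values of $\vol(g_{\omega,S})$.

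The main obstacle is the construction of $g_{\omega,S}$: a single finite product of generators of $\Gamma_{p,n}^{\omega}$ has to produce a volume that captures an infinite amount of information about $\omega$. The required bookkeeping is handled by the branch-group structure and the identity $\RiSt_{\Gamma_{p,n}^{\omega}}(v)=\iota_v(\Gamma_{p,n}^{L^{|v|}(\omega)})$ of Corollary~\ref{cor:structure-of-rist-higher-level}, which lets the recursion at each level be absorbed into a rigid stabilizer, so that the apparent infinite sum is in fact the volume of a finitely supported correction of the single element $\widetilde{t}^{\omega}$. Once such elements are in hand, the invariance of $V(\omega)$ under isomorphism, together with the countability of each $V(\omega)$, forces the image of $\omega\mapsto\vol(g_{\omega,S(\omega)})$ to meet uncountably many distinct sets $V(\omega)$, proving that the family $(\Gamma_{p,n}^{\omega})_{\omega\in\mathcal{S}_{p,n}}$ contains uncountably many pairwise non-isomorphic groups.
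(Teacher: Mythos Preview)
Your overall strategy matches the paper: support volumes are conjugation-invariant, each $\Gamma_{p,n}^{\omega}$ is countable, so it suffices that $\bigcup_{\omega} V(\omega)$ be uncountable. The gap is the construction of $g_{\omega,S}$. Your ``toggle individual levels via rigid stabilizers'' plan cannot work: a finite product of elements $\iota_{0^{k-1}\omega_k}(\cdot)$ alters only finitely many levels, so the attainable sets $S$ differ from any fixed reference set in only finitely many places, yielding countably many volumes. You name this obstacle in your last paragraph but do not overcome it; ``the recursion at each level can be absorbed into a rigid stabilizer'' conflates a recursive \emph{description} of an element with a finite \emph{product} of corrections. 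There is also a terminological slip: non-trivial elements of $\SAff_n(\Z)$ do not act on $T_{p,n}$ as rooted automorphisms (that would be $\SAff_n(\F_p)$, which is not contained in $\Gamma_{p,n}^{\omega}$), and it is precisely their non-rooted, self-similar action that is needed.

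The paper's fix is to use a \emph{single} conjugator acting uniformly on all levels. Take $A=E_{1,2}\in\SL_n(\Z)$, $T=T_{e_1}$, and restrict to sequences $\omega$ with $\omega_i\in\{e_1,e_2\}$. Since $A$ fixes $0$ and $e_1$ and sends $e_2\mapsto e_1+e_2$ with no $p$-adic carries, every relevant state of $A$ equals $A$ itself; combined with $ATA^{-1}=T$, one checks directly from Definition~\ref{def:collatz-automorphism} that $A\widetilde{T}^{\omega}A^{-1}=\widetilde{T}^{A\omega}$. Hence the single commutator $g=[A,\widetilde{T}^{\omega}]=\widetilde{T}^{A\omega}(\widetilde{T}^{\omega})^{-1}\in\Gamma_{p,n}^{\omega}$ has $\vol(g)=2\sum_{i:\,\omega_i=e_2}p^{-ni}$, which encodes the full subset $S=\{i:\omega_i=e_2\}\subseteq\N$. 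The missing idea is that one well-chosen element of $\SL_n(\Z)$, by self-similarity, already does at every level what your proposed sequence $s^{(k)}$ of level-by-level adjustments was meant to do.
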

\begin{proof}
If two of the groups $\Gamma_{p,n}^{\omega}$ are isomorphic, then they are conjugated in $\Aut(T_{p,n})$ (see Corollary~\ref{cor:summary}) and since 
conjugation in $\Aut(T_{p,n})$ preserves support volumes, we deduce that for isomorphic groups the sets
\[ 
	\vol(\Gamma_{p,n}^\omega) = \{\vol(g) \mid g \in \Gamma_{p,n}^\omega \}
\]
coincide. We note that the groups $\Gamma_{p,n}^\omega$ are finitely generated and thus $\vol(\Gamma_{p,n}^\omega)$ is a countable set.
In particular, it is sufficent to prove -- following  Theorem \ref{thm:continuum-of-volume-simple} -- that the set
	$\bigcup_{\omega \in \mathcal{S}_{p,n}} \vol(\Gamma_{p,n}^\omega)$
is uncountable.

Let $e_1,e_2,\dots, e_n$ denote the standard basis of $\Z^n$.
Consider the elementary matrix $A = E_{1,2} \in \SL_n(\Z)$ with $Ae_1 = e_1$ and $Ae_2 = e_1 + e_2$.
Let $T = T_{e_1} \in \SAff_n(\Z)$ be the translation with the first standard basis vector.
For every subset $S \subseteq \N$ we define
$\omega = \omega(S) \in \mathcal{S}_{p,n}$ such that
\[
	\omega_i = \begin{cases} e_1 \; & \text{ if } i \not\in S\\
							e_2 & \text{ if } i \in S\end{cases}.
\]
Let $\omega' = A \omega$.
Using the formula given in Definition \ref{def:collatz-automorphism} it is readily checked that $A\widetilde{T}^{\omega}A^{-1} = \widetilde{T}^{\omega'}$.
We consider the commutator $g = [A,\widetilde{T}^{\omega}] \in \Gamma_{p,n}^{\omega}$ and we observe that
\[
	g = [A,\widetilde{T}^{\omega}] = A \widetilde{T}^{\omega} A^{-1} (\widetilde{T}^{\omega})^{-1} = \widetilde{T}^{\omega'} (\widetilde{T}^{\omega})^{-1}
\]
In particular, $g$ acts non-trivially exactly on the boundary points $x \in \Z_p^n$ congruent to
$p^{i}e_2$ or $p^{i}(e_1+e_2)$ with $i \in S$  .
We obtain
\[
	\vol(g) = \sum_{i \in S} \frac{2}{p^{ni}} = 2 \sum_{i \in S} p^{-ni}
\]
and we observe that this number uniquely determines the set $S$. Since there are uncountably many subsets $S \subseteq \N$, this completes the proof.
\end{proof}

\section{Amenable groups acting on trees}\label{sec:amenable}

Let $X$ be a finite set, let $o \in X$ and let $X^{+} \defeq X \setminus \{o\}$.
Let $\mathcal{S}$ denote the set of infinite sequences over $X^{+}$.
Our goal in this section is to introduce amenable groups that have the same profinite completions as $\Gamma_{p,n}^{\Omega}$ for $n \geq 3$.
To this end we introduce automatic automorphisms of $T_X$.
Recall that for every vertex $v \in T_X$, we write $(T_X)_v$ to denote the subtree of $T_X$ whose vertex set is given by $vX^{\ast}$.
For $\alpha \in \Aut(T_X)$ we have $\alpha((T_X)_v) = (T_X)_{\alpha(v)}$.
Thus we can define the \emph{state of $\alpha$ at $v$} as the unique automorphism $\alpha_v$ of $T_X$ that satisfies $\alpha(vw) = \alpha(v)\alpha_v(w)$ for every $w \in X^{\ast}$.
The set of all states of $\alpha$ will be denoted by $S(\alpha) \defeq \Set{\alpha_v \in \Aut(T_X)}{v \in X^{\ast}}$.

\begin{definition}\label{def:automatic-automorphism}
An automorphism $\alpha$ of $T_X$ is called \emph{automatic}, if $S(\alpha)$ is finite.
\end{definition}

\begin{example}\label{exam:automatic-automorphism}
Let $\sigma$ be a rooted automorphism of $T_X$ and let $\omega = (\omega_{\ell})_{\ell \in \N} \in \mathcal{S}$.
Consider the automorphism $\widetilde{\alpha}^{\omega}$ of $T_X$.
For $v \in X^{\ast}$ we have
\[
\widetilde{\alpha}^{\omega}_v =
\begin{cases}
\widetilde{\alpha}^{L^{\ell}(\omega)},& \text{if } v = o^{\ell} \text{ for some } \ell \in \N_0\\
\alpha,& \text{if } v = o^{\ell}\omega_{\ell} \text{ for some } \ell \in \N_0\\
\id,              & \text{otherwise.}
\end{cases}
\]
Thus the set of states of $\widetilde{\alpha}^{\omega}$ is finite if and only if $\Set{L^{\ell}(\omega) \in \mathcal{S}}{\ell \in \N}$ is a finite subset of $\mathcal{S}$.
From this we see that $\widetilde{\alpha}^{\omega}$ is automatic if and only if there is some $N \in \N$ such that $L^{N}(\omega)$ is periodic.
\end{example}

\begin{definition}\label{def:bounded-automorphism}
An automorphism $\alpha \in \Aut(T_X)$ is called \emph{bounded}, if there is some $C \geq 0$ such that
\[
\abs{\Set{v \in X^{\ell}}{\alpha_v \neq \id}} \leq C
\]
for all $\ell \in \N_0$.
\end{definition}

\begin{example}\label{exam:bounded-automorphism}
Let $\sigma$ be a rooted automorphism of $T_X$.
Then $\widetilde{\alpha}^{\omega}$ is clearly bounded for every choice of $\omega \in \mathcal{S}$.
\end{example}

It can be easily seen that the set of all bounded automatic automorphisms of $T_X$ forms a group.
In~\cite[Theorem 1.2]{BartholdiKaimanovichNekrashevych10}, Bartholdi, Kaimanovich and Nekrashevych proved that this group is amenable.
As subgroups of amenable groups are amenable, it follows that every subgroup of $\Aut(T_X)$ that is generated by bounded automatic automorphisms is amenable.
In view of Example~\ref{exam:automatic-automorphism} and Example~\ref{exam:bounded-automorphism} we therefore obtain the following.
\begin{proposition}\label{prop:amenability-criterion}
Let $G \leq \Aut(T_X)$ be a group of rooted automorphisms and let $\Omega \subseteq \mathcal{S}$ be a non-empty subset.
Suppose that every $\omega \in \Omega$ is evenually periodic, i.e.\ there is a $N_{\omega} \in \N$ such that $L^{N_{\omega}}(\omega)$ is periodic.
Then $\Gamma_{G}^{\Omega}$ is amenable.
\end{proposition}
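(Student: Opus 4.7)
The strategy is to verify that the group $\Gamma_G^{\Omega}$ is generated by bounded automatic automorphisms of $T_X$ and then invoke the Bartholdi--Kaimanovich--Nekrashevych theorem, together with the hereditary behavior of amenability under taking subgroups.

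First, I would recall that by Definition \ref{def:omega-elements} the group $\Gamma_G^\Omega$ is generated by the union $G \cup \bigcup_{\omega \in \Omega} \widetilde{G}^\omega$. So it suffices to check that each element in this generating set is both bounded and automatic. For a rooted automorphism $\sigma \in G$, the wreath decomposition reads $\sigma = \sigma \cdot (\id)_{x \in X}$, so $\sigma_x = \id$ for every $x \in X$, and by induction $\sigma_v = \id$ for every $v \in X^\ast$ with $\lv(v) \geq 1$. Hence $S(\sigma) = \{\sigma, \id\}$ is finite and $\sigma$ is automatic; it is trivially bounded as well (all states at positive levels are trivial).

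Next, for the generators of the form $\widetilde{\alpha}^\omega$ with $\alpha \in G$ and $\omega \in \Omega$, Example~\ref{exam:bounded-automorphism} already records that $\widetilde{\alpha}^\omega$ is bounded for any $\omega \in \mathcal{S}$ (in fact at each level at most two states are non-trivial, corresponding to the two branches $o^{\ell}$ and $o^{\ell-1}\omega_\ell$). Moreover, Example~\ref{exam:automatic-automorphism} identifies the states of $\widetilde{\alpha}^\omega$ as lying in the finite set $\{\alpha, \id\} \cup \{\widetilde{\alpha}^{L^\ell(\omega)} : \ell \geq 0\}$. Since by hypothesis $\omega$ is eventually periodic, the orbit $\{L^\ell(\omega) : \ell \geq 0\}$ is a finite subset of $\mathcal{S}$, so $S(\widetilde{\alpha}^\omega)$ is finite and $\widetilde{\alpha}^\omega$ is automatic.

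Having verified that all generators of $\Gamma_G^{\Omega}$ are bounded automatic automorphisms, I would conclude by noting that the set $\mathcal{B}$ of bounded automatic automorphisms of $T_X$ is a subgroup of $\Aut(T_X)$, and by \cite[Theorem 1.2]{BartholdiKaimanovichNekrashevych10} the group $\mathcal{B}$ is amenable. Since $\Gamma_G^{\Omega} \leq \mathcal{B}$ and amenability passes to subgroups, the desired conclusion follows. I do not foresee a serious obstacle here, as the proposition is essentially an immediate synthesis of the two examples with the Bartholdi--Kaimanovich--Nekrashevych result; the only small point to keep in mind is verifying that the set of all bounded automatic automorphisms is closed under composition and inversion (which is standard and already implicitly used in the excerpt).
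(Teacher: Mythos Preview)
Your proof is correct and follows essentially the same route as the paper: verify that the generators of $\Gamma_G^{\Omega}$ are bounded automatic automorphisms (using Examples~\ref{exam:automatic-automorphism} and~\ref{exam:bounded-automorphism}), observe that these form a group, and apply the Bartholdi--Kaimanovich--Nekrashevych theorem together with the fact that amenability passes to subgroups. You have simply spelled out a few more details (in particular the trivial case of rooted automorphisms), but there is no substantive difference.
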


Now we are able to proof Theorem \ref{thm:main-theorem}.
Let $p$ be a prime and let $n \geq 3$ be a natural number.
Consider the natural action of $\SAff_n(\F_p)$ on $\mathcal{A}_{p,n} \defeq \{0,\ldots,p-1\}^{n}$.
Let $\mathcal{A}_{p,n}^{+}$ denote the complement of $0 \defeq (0,0,\dots,0)$ in $\mathcal{A}_{p,n}$ and let $\mathcal{S}_{p,n}$
denote the set of sequences in $\mathcal{A}_{p,n}^+$.
For every non-empty set $\Omega \subseteq \mathcal{S}_{p,n}$, we write $A_{p,n}^{\Omega} \defeq \Gamma_{\SAff_n(\F_p)}^{\Omega}$, where $\SAff_n(\F_p)$ is identified with the corresponding group of rooted automorphisms of $T_{p,n} \defeq T_{\mathcal{A}_{p,n}}$.
Let further $G_{p,n}$ denote the subgroup of $\Aut(T_{p,n})$ that is generated by the canonical actions of $\SAff_n(\F_p)$ and $\SAff_n(\Z)$ on $T_{p,n}$.
Let $M_{p,n}^{\Omega} \defeq \Gamma_{G_{p,n}}^{\Omega}$ be the corresponding $\Omega$-group. Equivalently, $M_{p,n}$ is the subgroup of $\Aut(T_{p,n})$ that is generated by $A_{p,n}^{\Omega}$ and $\Gamma_{p,n}^{\Omega}$.

\begin{theorem}\label{thm:main-theorem-precise}
Let $n \geq 3$ be a natural number, let $p$ be a prime, let $\omega \in \mathcal{S}_{p,n}$ be eventually periodic,
and let $\Omega \subseteq \mathcal{S}_{p,n}$ be a finite subset.
Then the following hold:
\begin{enumerate}
\item $A_{p,n}^{\omega}$ is a finitely generated amenable group.
\item $M_{p,n}^{\Omega}$ is finitely generated and contains a non-abelian free group.
\item If $\omega \in \Omega$, then the inclusion $\iota \colon A_{p,n}^{\omega} \rightarrow M_{p,n}^{\Omega}$ induces an isomorphism $\widehat{\iota} \colon \widehat{A_{p,n}^{\omega}} \rightarrow \widehat{M_{p,n}^{\Omega}}$ of profinite completions.
\item The family $(M_{p,n}^{\{\omega,\omega'\}})_{\omega' \in \mathcal{S}_{p,n}}$ contains uncountably many pairwise non-isomorphic groups.
\end{enumerate}
\end{theorem}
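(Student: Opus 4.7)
The plan is to verify, for the specific groups at hand, the hypotheses of the general results established in Sections \ref{sec:trees-and-construction}--\ref{sec:uncountable} and then read off each of the four assertions by direct application. For (1), the group $A_{p,n}^\omega$ is generated by $\SAff_n(\F_p)$ (acting by rooted automorphisms) together with its twist $\widetilde{\SAff_n(\F_p)}^\omega$, both of which are finite, so $A_{p,n}^\omega$ is finitely generated; its amenability is Proposition \ref{prop:amenability-criterion} applied to $G=\SAff_n(\F_p)$ with $\Omega=\{\omega\}$, using that $\omega$ is eventually periodic. For (2), $M_{p,n}^\Omega$ is finitely generated because it is generated by the finitely many $A_{p,n}^\omega$ for $\omega \in \Omega$ together with $\Gamma_{p,n}^\Omega$, each of which is finitely generated when $\Omega$ is finite and $\SAff_n(\Z)$ is finitely generated; the inclusion $\SL_n(\Z)\subseteq M_{p,n}^\Omega$ supplies a non-abelian free subgroup.

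The crucial step is (3), which I would derive from Corollary \ref{cor:induced-isomorphism-general} applied with $G = \SAff_n(\F_p)$ and $H = G_{p,n}$. This requires checking that both groups are perfect, self-similar subgroups of $\Aut(T_{p,n})$ satisfying property H, and that they have the same image in $\Sym(\mathcal{A}_{p,n})$. Perfectness follows from Lemma \ref{lem:Affn-is-perfect} together with the elementary fact that a group generated by perfect subgroups is perfect. Self-similarity of the rooted action of $\SAff_n(\F_p)$ is automatic, since all states are the identity; for $G_{p,n}$ it follows because the generators have states in the generating set (identity for rooted elements, or in $\SAff_n(\Z)$ by Lemma \ref{lem:affine-stabilizers}) and the state assignment is multiplicative. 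Property H is inherited by $G_{p,n}$ from the $2$-transitive action of $\SAff_n(\F_p)$ on $\F_p^n$. Since the action of $\SAff_n(\Z)$ on the first level factors through reduction modulo $p$, both groups have image $\SAff_n(\F_p) \leq \Sym(\mathcal{A}_{p,n})$; with $\{\omega\}\subseteq\Omega$ and $\SAff_n(\F_p) \leq G_{p,n}$, the second half of Corollary \ref{cor:induced-isomorphism-general} delivers the required isomorphism of profinite completions.

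For (4), I apply Corollary \ref{cor:uncountably-many-iso-types-simple} to $G = G_{p,n}$, which is countable and has the remaining required properties by the verifications carried out in (3); the resulting family $(\Gamma_{G_{p,n}}^{\{\omega,\omega'\}})_{\omega'\in\mathcal{S}_{p,n}} = (M_{p,n}^{\{\omega,\omega'\}})_{\omega'\in\mathcal{S}_{p,n}}$ then contains uncountably many pairwise non-isomorphic groups. The most substantial verification throughout is the self-similarity of $G_{p,n}$, which rests on the concrete state computation of Lemma \ref{lem:affine-stabilizers}; once this is in hand, the remainder of the argument is essentially bookkeeping with the results from the preceding sections.
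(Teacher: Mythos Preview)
Your proposal is correct and follows essentially the same route as the paper's proof: each assertion is deduced from the corresponding general result (Proposition~\ref{prop:amenability-criterion}, Corollary~\ref{cor:induced-isomorphism-general}, Corollary~\ref{cor:uncountably-many-iso-types-simple}) after verifying that $\SAff_n(\F_p)$ and $G_{p,n}$ are perfect, self-similar, satisfy property~H, and share the same first-level image $\SAff_n(\F_p)$. Your treatment of the self-similarity of $G_{p,n}$ via the multiplicativity of states is slightly more explicit than the paper's remark that ``one can use the argument given in Lemma~\ref{lem:affine-stabilizers}'', but the content is the same.
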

\begin{proof}
The first assertion follows from Proposition \ref{prop:amenability-criterion}.
Since $\Omega$ is finite, it follows that $M_{p,n}^\Omega$ is finitely generated. Since $M^\Omega_{p,n}$ contains
$\Gamma_{p,n}^\Omega$, it contains a non-abelian free subgroup by \ref{cor:summary}.

To prove the third assertion, 
we verify the assumptions of Corollary~\ref{cor:induced-isomorphism-general}.
First we observe that the groups $\SAff_n(\F_p)$ and $\langle \SAff_n(\F_p) \cup \SAff_n(\Z) \rangle$ are perfect (see Lemma \ref{lem:Affn-is-perfect}). In addition these groups are self-similar and satisfy property~H; to see this one can use the argument given in Lemma \ref{lem:affine-stabilizers}.

Finally, it follows from \ref{cor:uncountably-many-iso-types-simple} that the family $(M_{p,n}^{\{\omega,\omega'\}})_{\omega' \in \mathcal{S}_{p,n}}$ of subgroups contains uncountably many pairwise non-isomorphic groups
\end{proof}
In order deduce Theorem \ref{thm:main-theorem} from Theorem \ref{thm:main-theorem-precise}, it remains to determine the number of generators. It is known that $\SL_n(\Z)$ and $\SL_n(\F_p)$ are $2$-generated (see \cite{HuaReiner49}), 
and so $\SAff_n(\Z)$ and $\SAff_n(\F_p)$ can be generated by $3$ elements.
Since $A_{p,n}^{\omega}$ is generated by two copies of $\SAff_n(\F_p)$, it is $6$-generated. Similary the group $G_{p,n}$ is $6$-generated and so $M_{p,n}^{\{\omega,\omega'\}}$ -- which is generated be three copies of $G_{p,n}$ -- can be generated using $18$ elements.

\bibliographystyle{amsplain}
\bibliography{literatur}

\providecommand{\bysame}{\leavevmode\hbox to3em{\hrulefill}\thinspace}
\providecommand{\MR}{\relax\ifhmode\unskip\space\fi MR }
\providecommand{\MRhref}[2]{%
  \href{http://www.ams.org/mathscinet-getitem?mr=#1}{#2}
}
\providecommand{\href}[2]{#2}
\begin{thebibliography}{10}

\bibitem{Adyan82}
S.~I. Adyan, \emph{Random walks on free periodic groups}, Izv. Akad. Nauk SSSR
  Ser. Mat. \textbf{46} (1982), no.~6, 1139--1149, 1343. \MR{682486}

\bibitem{BGS-branch}
Laurent Bartholdi, Rostislav~I. Grigorchuk, and Zoran \v{S}uni\'{k},
  \emph{Branch groups}, Handbook of algebra, {V}ol. 3, Handb. Algebr., vol.~3,
  Elsevier/North-Holland, Amsterdam, 2003, pp.~989--1112. \MR{2035113}

\bibitem{BartholdiKaimanovichNekrashevych10}
Laurent Bartholdi, Vadim~A Kaimanovich, and Volodymyr~V Nekrashevych, \emph{On
  amenability of automata groups}, Duke Mathematical Journal \textbf{154}
  (2010), no.~3, 575--598.

\bibitem{Borovik-Pyber-Shalev}
Alexandre~V. Borovik, Laszlo Pyber, and Aner Shalev, \emph{Maximal subgroups in
  finite and profinite groups}, Trans. Amer. Math. Soc. \textbf{348} (1996),
  no.~9, 3745--3761. \MR{1360222}

\bibitem{Bozejko80}
Marek Bo\.{z}ejko, \emph{Uniformly amenable discrete groups}, Math. Ann.
  \textbf{251} (1980), no.~1, 1--6. \MR{583820}

\bibitem{BridsonGrunewald}
Martin~R. Bridson and Fritz~J. Grunewald, \emph{Grothendieck's problems
  concerning profinite completions and representations of groups}, Ann. of
  Math. (2) \textbf{160} (2004), no.~1, 359--373. \MR{2119723}

\bibitem{Grigorchuk00}
Rostislav~I Grigorchuk, \emph{Just infinite branch groups}, New horizons in
  pro-p groups, Springer, 2000, pp.~121--179.

\bibitem{Grothendieck70}
Alexander Grothendieck, \emph{Repr\'{e}sentations lin\'{e}aires et
  compactification profinie des groupes discrets}, Manuscripta Math. \textbf{2}
  (1970), 375--396. \MR{262386}

\bibitem{Hahn-OMeara}
Alexander~J. Hahn and O.~Timothy O'Meara, \emph{The classical groups and
  {$K$}-theory}, Grundlehren der Mathematischen Wissenschaften [Fundamental
  Principles of Mathematical Sciences], vol. 291, Springer-Verlag, Berlin,
  1989, With a foreword by J. Dieudonn\'{e}. \MR{1007302}

\bibitem{HuaReiner49}
L.~K. Hua and I.~Reiner, \emph{On the generators of the symplectic modular
  group}, Trans. Amer. Math. Soc. \textbf{65} (1949), 415--426. \MR{29942}

\bibitem{Kaimanovich-Vershik-83}
V.~A. {Ka\u{\i}manovich} and A.~M. Vershik, \emph{Random walks on discrete
  groups: boundary and entropy}, Ann. Probab. \textbf{11} (1983), no.~3,
  457--490. \MR{704539}

\bibitem{Kaimanovich-80}
V.~A. {Ka\u{\i}manovi\v{c}}, \emph{The spectral measure of transition operator
  and harmonic functions connected with random walks on discrete groups}, Zap.
  Nauchn. Sem. Leningrad. Otdel. Mat. Inst. Steklov. (LOMI) \textbf{97} (1980),
  102--109, 228--229, 236, Problems of the theory of probability distributions,
  VI. \MR{602365}

\bibitem{Keller72}
Gordon Keller, \emph{Amenable groups and varieties of groups}, Illinois J.
  Math. \textbf{16} (1972), 257--269. \MR{296141}

\bibitem{Kesten59}
Harry Kesten, \emph{Symmetric random walks on groups}, Trans. Amer. Math. Soc.
  \textbf{92} (1959), 336--354. \MR{109367}

\bibitem{LavrenyukNekrashevych02}
Yaroslav Lavrenyuk and Volodymyr Nekrashevych, \emph{Rigidity of branch groups
  acting on rooted trees}, Geometriae Dedicata \textbf{89} (2002), 155--175.

\bibitem{Mann96}
Avinoam Mann, \emph{Positively finitely generated groups}, Forum Math.
  \textbf{8} (1996), no.~4, 429--459. \MR{1393323}

\bibitem{Nekrashevych14}
Volodymyr Nekrashevych, \emph{An uncountable family of 3-generated groups with
  isomorphic profinite completions}, Internat. J. Algebra Comput. \textbf{24}
  (2014), no.~1, 33--46. \MR{3189664}

\bibitem{Osin07}
Denis~V. Osin, \emph{Uniform non-amenability of free {B}urnside groups}, Arch.
  Math. (Basel) \textbf{88} (2007), no.~5, 403--412. \MR{2316885}

\bibitem{PlatonovTavgen}
V.~P. Platonov and O.~I. Tavgen, \emph{On the {G}rothendieck problem of
  profinite completions of groups}, Dokl. Akad. Nauk SSSR \textbf{288} (1986),
  no.~5, 1054--1058. \MR{852649}

\bibitem{Pyber04}
L\'{a}szl\'{o} Pyber, \emph{Groups of intermediate subgroup growth and a
  problem of {G}rothendieck}, Duke Math. J. \textbf{121} (2004), no.~1,
  169--188. \MR{2031168}

\bibitem{Segal01}
Dan Segal, \emph{The finite images of finitely generated groups}, Proceedings
  of the London Mathematical Society \textbf{82} (2001), no.~3, 597--613.

\bibitem{SidkiWilson03}
Said Sidki and John~S Wilson, \emph{Free subgroups of branch groups}, Archiv
  der Mathematik \textbf{80} (2003), no.~5, 458--463.

\bibitem{Wilson-FSG}
Robert~A. Wilson, \emph{The finite simple groups}, Graduate Texts in
  Mathematics, vol. 251, Springer-Verlag London, Ltd., London, 2009.
  \MR{2562037}

\bibitem{Wyso88}
Janusz Wysocza\'{n}ski, \emph{On uniformly amenable groups}, Proc. Amer. Math.
  Soc. \textbf{102} (1988), no.~4, 933--938. \MR{934870}

\end{thebibliography}

\end{document}